\numberwithin{equation}{section}
\definecolor{darkgreen}{rgb}{0.09, 0.65, 0.27}
\definecolor{debianred}{rgb}{0.84, 0.04, 0.33}
\definecolor{orange}{rgb}{1.0, 0.5, 0.0}
\newcommand{\R}{\mathbb R}
\newcommand{\be}{\begin{equation}}
\newcommand{\ee}{\end{equation}}
\newcommand{\ben}{\begin{eqnarray*}}
\newcommand{\een}{\end{eqnarray*}}
\newcommand{\cb}{\color{blue}}
\newcommand{\cred}{\color{red}}
\newcommand{\cgr}{\color{darkgreen}}
\newtheorem{theorem}{Theorem}
\newtheorem{lemma}{Lemma}
\newtheorem{remark}{Remark}
\newtheorem{proposition}{Proposition}
\newtheorem{definition}{Definition}
\newcommand{\norm}[1]{\ensuremath{\left\|#1\right\|}}
\definecolor{DarkBlue}{rgb}{0,0.1,0.7}  
\definecolor{DarkGreen}{rgb}{0,0.5,0.1}
\newcommand\soutD{\bgroup\markoverwith
	{\textcolor{DarkGreen}{\rule[.5ex]{2pt}{1pt}}}\ULon}
\newcommand{\dx}{\ensuremath{\, {\rm d}x}}
\newcommand{\dy}{\ensuremath{\, {\rm d}y}}
\title[Liouville results for   $(p,q)$-Laplacian equations]{Liouville results for  $(p,q)$-Laplacian elliptic equations\\
with source terms involving  gradient nonlinearities}
\begin{document}

\author{Mousomi Bhakta, Anup Biswas}
\address{Department of Mathematics, Indian Institute of Science Education and Research Pune, Dr.\
Homi Bhabha Road, Pune 411008, India,\\
email: mousomi@iiserpune.ac.in, \, anup@iiserpune.ac.in}

\author{Roberta Filippucci}
\address{Dipartimento di Matematica e Informatica, Universit\'a degli Studi di Perugia, Via Vanvitelli 1, 06123 Perugia, Italy, email: roberta.filippucci@unipg.it}

\begin{abstract}
In this paper, we present a series of Liouville-type theorems for a class of nonhomogeneous quasilinear elliptic equations featuring reactions that depend on the solution and its gradient. Specifically, we investigate equations of the form $-\Delta_p u - \Delta_q u = f(u,\nabla u)$ with  $p > q > 1$, where the nonlinearity $f$ takes forms such as $u^s|\nabla u|^m$ or $u^s + M|\nabla u|^m$ ($s,\,m\geq 0$).

    Our approach is twofold. For cases where the reaction term satisfies $|f(u,\nabla u)| \leq g(u)|\nabla u|^m$ with $m > q$ and $g$ continuous, we prove that every bounded solution (without any sign restriction) in $\mathbb{R}^N$ is constant by means of an Ishii–Lions type technique. In the remaining scenarios, we turn to the Bernstein method. The application of this method to the nonhomogeneous operator requires a nontrivial adaptation, as, roughly speaking, constant coefficients are replaced by functions that may not be bounded from above, which enables us to establish a crucial a priori estimate for the gradient of solutions in any domain $\Omega$. This estimate, in turn, implies the desired Liouville properties on the entire space $\mathbb{R}^N$.
     As a consequence, we have fully extended Lions Liouville-type result for the Hamilton-Jacobi equation to the 
$(p,q)$-Laplacian setting, while for the  
$(p,q)$ generalized Lane-Emden equation, we provide an initial contribution in the direction of the classical result by Gidas and Spruck for $p=q=2$, as well as that of Serrin and Zou for $p=q$.

    To the best of our knowledge, this is the first paper which studies Liouville properties for equations with nonhomogeneous operator involving source gradient terms.

\end{abstract}

\keywords{Positive solutions, Bernstein type technique, quasilinear equations, gradient nonlinearities, Ishii-Lions method, product of nonlinearities}
\subjclass[2020]{Primary: 35J60, 35J92, 35J70, 35B08}

\maketitle
\setcounter{tocdepth}{1}
\tableofcontents

\medskip

\noindent

\section{Introduction}

In this paper we obtain Liouville type results for positive solutions to the following equation
\begin{equation}\label{main}-\Delta_pu-\Delta_q u= f(u,\nabla u) \quad\mbox{in}\quad \mathbb R^N, \qquad p>q>1, \end{equation}
where  the source term $f$, defined in $\mathbb R^+\times\mathbb R^N$ ($N\geq 2$), has three different forms
$$\begin{aligned}f(t,\xi)&=|\xi|^m,\quad m>p-1,\\
f(t,\xi)&=t^s|\xi|^m, \quad s>0, \,\, m\ge0,\\
f(t,\xi)&=t^s+M|\xi|^m,\quad M>0, \, s,m>0 .\end{aligned}$$
 We point out that, when $m=0$ in  the second expression of $f$, our Liouville results include also Lane -Emden type nonlinearities, investigated in the Laplacian case in the pioneering paper \cite{GS} by Gidas and Spruck and later extended to the $p$-Laplacian operator by Serrin and Zou in \cite{SZ}.

Equation \eqref{main} is driven by the  $(p,q)$-Laplacian operator, given by a combination of two $s$-Laplacian operators, arising in many applications such as the study of reaction-diffusion systems whose general  form is 
$$u_{t}=\mbox{div}[A(u)\nabla u]+c(x,u),$$
where the function $u$ is a state variable and describes the density or concentration of multicomponent substances, while $A(u)$ is called
diffusion coefficient, the term $c(x,u)$
 is the reaction and relates to sources and loss processes.  Typically, in chemical and biological applications, the reaction
 term $c(x,u)$ has a polynomial form with respect to the concentration $u$.
 The $(p,q)$-Laplacian  operator can be obtained for a diffusion coefficient having a power
 law dependency of the form $A(u)=(|\nabla u|^{p-2}+|\nabla u|^{q-2})$.
 Reaction-diffusion systems  have a wide range of applications in physics and related sciences, such as biophysics, chemical reaction and plasma physics.  The initial approach to handling such operators originates from Zhikov \cite{Z} (see also \cite{M}), who introduced these classes in the context of modeling strongly anisotropic materials.

Another remarkable  subcase of \eqref{main} is the nonlinear Schr\"odinger equation, which allows to study solitary waves or solitons, which are special solutions whose profile remain unchanged under the evolution in time. Here we are interested in the stationary version.

When dealing with gradient type nonlinearities,  for models used in population dynamics, we refer to \cite{S}, see also \cite{B} where the term $u^s|\nabla u|^m$ is interpreted in terms of a probability function, modelling the predatory greed during a predation event.

The study of equation \eqref{main} presents several challenges. One major difficulty arises from the structure of the differential operator involved, which is not only nonlinear and obtained as a combination of possibly degenerate and singular operators, but also nonhomogeneous, precluding the application of  well-established techniques traditionally used in the homogeneous setting.  Moreover, the presence of a gradient term in the nonlinearity further complicates the analysis: it prevents the problem from being variational in nature and requires the use of sophisticated techniques to address it. 

A first physical model for gradient type nonlinearities is given by the Hamilton-Jacobi equation $\Delta u=|\nabla u|^m$ in $\Omega\subset\mathbb R^N$ 
first  investigated by Lions in \cite{Lions}.
Using a Bernstein-type technique, he established the {\it Liouville property}, that is,  any $C^2$ solution must be a constant,  for every $m>1$. 
The quasilinear version of the Hamilton-Jacobi  equation 
 $$\Delta_p u=|\nabla u|^m \quad  \mbox{in} \quad  \Omega,$$ was later investigated
by Bidaut-Véron, Garcia-Huidobro and Véron in \cite{BV_HJ}, obtaing that for any $C^1$ solution  in an arbitrary domain $\Omega\subset\mathbb R^N$, with  $1<p\le N$  and $m>p-1$, the following estimate holds
$$|\nabla u(x)|\le C (\mbox{dist}(x,\partial\Omega))^{1/(m-p+1)}.$$
As a consequence,  a Liouville-type result holds when $\Omega=\mathbb R^N$.  This result is in the same spirit as the work of Dancer \cite{D}, and it is also related to the findings in \cite{SZ}. 

A further generalization considers a reaction term that depends not only on the gradient, but also explicitly on a power of $u$, namely
\begin{equation}\label{eqgh}
-\Delta_p u=u^s|\nabla u|^m \quad  \mbox{in} \quad  \mathbb R^N,
\end{equation}
introduced in its radial form for  $p=2$ in \cite{CM}. 
It is well known that any nonconstant, nonnegative supersolution to equation \eqref{eqgh} must in fact be constant in the so-called {\it first subcritical range}, defined by
\begin{equation}\label{supercr}
s(N - p) + m(N - 1) \leq N(p - 1),
\end{equation}
for which we refer to \cite{MP} and \cite{F} for further details.

Bidaut-Véron showed in \cite{BV}  that when $1<p<N$, $s\ge0$ and $m\ge p$, any positive $C^1$ solution to \eqref{eqgh}
must be constant, generalizing a previous work by Filippucci, Pucci and Souplet in \cite{FPS}, for the case $p=2$ and assuming boundedness of the solution.  

In the case \eqref{eqgh} with 
$m<p$, Liouville-type results are known only for certain subregions. We refer to \cite{VBVGH} in the case 
$p=2$, where Theorem B establishes the Liouville property as a consequence of pointwise gradient estimates in arbitrary domains $\Omega\subset\mathbb R^N$. These estimates are obtained using a direct Bernstein method combined with a change of variables, resulting in an a priori estimate for a suitably chosen auxiliary function.
An initial extension to the 
$p$-Laplacian is presented in \cite{CHZ}, where the authors introduce a technical device to circumvent the change of variables—an approach that would otherwise entail significant algebraic complexity due to the nonlinear structure of the 
$p$-Laplacian. This alternative method, however, leads to a slightly more restrictive threshold; see Remark 1.1 in \cite{CHZ} for details.

Concerning the Liouville property for positive solutions of 
\begin{equation}\label{sum}
-\Delta_p u=u^s+M|\nabla u|^m \quad  \mbox{in} \quad  \mathbb R^N,\quad M>0,
\end{equation}
the main contributions can be found in \cite{VB2} when $p=2$ and in \cite{FSZ} for equation \eqref{sum}, where again the direct method of Bernstein is employed. As discussed in details in \cite{VB2}, the equation \eqref{sum}  presents some similarities with either the Lane–
Emden equation or the Hamilton-Jacobi equation, depending on whether the exponent $m$ is  subcritical or supercritical with respect to $ps/(s+1)$.

Equations \eqref{eqgh} and \eqref{sum} have a common feature that they are invariant under the action of transformations of the form 
$$T_\sigma[u](x)=\sigma^\alpha u(\sigma x), \qquad \sigma, \alpha>0,$$
with  $\alpha=\frac{p-m}{s+m-p+1}>0$ and $\alpha=\frac{ps}{s+1}$, respectively.

In dealing with the $(p,q)$-Laplacian, the lack of homogeneity requires a delicate extension of the Bernstein technique. Indeed, instead of constant coefficients, one now has to handle functions that depend on the solution $u$  and on $|\nabla u|^2$, which are not only variable but also unbounded from above. As a result, highly nontrivial estimates are needed when $m\le q$.

Moreover, when $m>q$, the presence of these functions prevents the derivation of upper estimates, thereby making it is impossible to apply a Bernstein-type technique. For this reason, a different approach, based on the Ishii–Lions method \cite{IL90} and discussed below, is employed.

\iffalse
It is worth noting that the applicability of the Bernstein type technique requires the auxiliary differential operator associated with the problem to be uniformly elliptic. In our setting,  this condition imposes structural constraints on the coefficients of the operator, and, in particular, leads to the lower bound {\cred $q\ge2$} {\cred NOT TRUE}. {\cgr can we accept $m>q>1$ in Theorem 3? {\color{cyan} Yes}} {\cred very good}
\fi

\medskip 
In this paper, as in \cite{SZ}, we consider weak  solutions of \eqref{main}, namely,
 \begin{definition}\label{defi-sol}
We say that a  function $u\in C^1(\Omega)$ is a solution  of  $-\Delta_p u-\Delta_q u= f(x,u,\nabla u)\quad\text{in  }\, \Omega$ if 
$$\int_{\Omega}|\nabla u|^{p-2}\nabla u\cdot\nabla \varphi \dx+\int_{\Omega}|\nabla u|^{q-2}\nabla u\cdot\nabla \varphi dx= \int_{\Omega}f(x,u,\nabla u)\varphi dx\quad\forall\,\, \varphi\in C^1_c(\Omega).$$
\end{definition}
 Throughout this article, by a subsolution, supersolution or a solution we would mean $C^1$ weak
subsolution, supersolution and solution, respectively.

We begin by presenting the main results of the paper, which address all three nonlinearities in \eqref{main} within $\Omega$, where $\Omega$ is a domain in 
$\mathbb{R}^N$ with $N\geq 2$. 
The first result is the complete extension of Lions result for the Laplacian in \cite{Lions} and that of Bidaut Veron et al. \cite{BV_HJ} for the $p$-Laplacian, to the Hamilton-Jacobi involving $(p,q)$-Laplacian case.  

\begin{theorem}\label{th_HJ} 
    Let $\Omega \subset \mathbb R^N$ be a domain, and assume $m > p-1$. Let $u$ be a solution to
    \begin{equation}\label{main_HJ}-\Delta_p u - \Delta_q u = |\nabla u|^m \quad \text{in } \Omega.
    \end{equation}
    Then the following hold:
    \begin{enumerate}
    \item[(i)] There exists a positive constant $C = C(N, m, p, q)$ such that
    \begin{equation}\label{1-8}
    \left|\nabla u(x)\right| \leq C \left( 1 + \mathrm{dist}(x, \partial\Omega)^{-\frac{1}{m-p+1}} \right) \quad \forall\, x \in \Omega.
    \end{equation}
    \item[(ii)] If $\Omega = \mathbb R^N$, then every solution of \eqref{main_HJ} is constant.
    \end{enumerate}
\end{theorem}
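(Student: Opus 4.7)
The strategy is the Bernstein method adapted to the nonhomogeneous $(p,q)$-Laplacian: part~(i) is the technical core, and (ii) follows from (i) by a bootstrap.

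\emph{Setup and differentiated equation.} Because solutions are only $C^1$, I would first regularize by replacing each $\Delta_r u$ with $\operatorname{div}((|\nabla u|^2+\varepsilon)^{(r-2)/2}\nabla u)$ for $r\in\{p,q\}$ and pass $\varepsilon\to 0$ at the end. For a classical solution of the regularized equation set $v:=|\nabla u|^2$; differentiating in $x_k$, multiplying by $u_k$ and summing produces an inequality
\[
\mathcal{L} v \;\ge\; c\,|\nabla^2 u|^2 \,-\, m\,|\nabla u|^{m-2}\langle\nabla u,\nabla v\rangle,
\]
in which the principal coefficients of $\mathcal{L}$ have the form $(|\nabla u|^2+\varepsilon)^{(p-2)/2}A_p + (|\nabla u|^2+\varepsilon)^{(q-2)/2}A_q$ with uniformly elliptic matrices $A_p,A_q$ depending only on the unit vector $\nabla u/|\nabla u|$. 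The novelty compared with the pure $p$-Laplacian case treated in \cite{BV_HJ} is precisely the mixture of two different homogeneities in $|\nabla u|$, which prevents the subsequent algebraic manipulations from closing under a single power of $v$.

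\emph{Auxiliary function and maximum principle.} For an arbitrary ball $B_R(x_0)$ compactly contained in $\Omega$, take a smooth cutoff $\eta$ with $\eta\equiv 1$ on $B_{R/2}(x_0)$ and $|\nabla^j\eta|\le C R^{-j}$ for $j=1,2$, and consider
\[
w \;=\; \eta^{\beta}\,(v+K)^{\gamma},
\]
with $\beta,\gamma>0$ and $K\ge 1$ to be chosen. At an interior maximum $\bar x$ of $w$, the conditions $\nabla w(\bar x)=0$ and $\mathcal{L}w(\bar x)\le 0$, combined with $|\nabla^2 u|^2\ge |\nabla v|^2/(4|\nabla u|^2)$ and with the equation itself (used to re-express trace terms in $\mathcal{L}w$ through $|\nabla u|^m$), are expected to yield an algebraic inequality of the schematic form
\[
(v+K)^{(m-p+1)/2}(\bar x) \;\le\; C\Big(\frac{1}{\eta(\bar x)\,R}\,+\,K^{(q-p)/2}\Big).
\]
Fixing $K$ as a structural constant absorbs the second term on the right (which encodes the obstruction coming from the $q$-Laplacian); propagating the resulting bound from $\bar x$ back to $B_{R/2}(x_0)$ gives $|\nabla u|\le C(1+R^{-1/(m-p+1)})$, and estimate~(i) follows by choosing $R=\operatorname{dist}(x,\partial\Omega)/2$.

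\emph{From (i) to (ii), and the main obstacle.} On $\mathbb R^N$ estimate (i) collapses to the a priori bound $|\nabla u|\le C$. Knowing now that $v$ is uniformly bounded, I would rerun the previous step with $K=0$ in the auxiliary function $w=\eta^\beta v^\gamma$: the previously problematic term of size $K^{(q-p)/2}$ is now controlled by a genuine power of the already bounded quantity $v$ and can be absorbed on the left-hand side of the maximum-point inequality. The sharpened estimate on an arbitrary ball $B_R(x_0)$ reads $|\nabla u(x_0)|\le C R^{-1/(m-p+1)}$, which tends to zero as $R\to\infty$, so $u$ must be constant. The decisive technical hurdle is the Bernstein step itself: because the coefficients of $\mathcal{L}$ mix two homogeneities, the standard algebra does not close with a single application of Young's inequality. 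One must distinguish the regimes $v\lesssim 1$ and $v\gg 1$ and balance the contributions of the $p$- and $q$-parts carefully; this balancing is precisely what forces the additive $1$ in~(i), and the bootstrap in~(ii) is the mechanism by which that additive defect is ultimately eliminated on the full space.
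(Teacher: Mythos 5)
Your part (i) is, in substance, the paper's argument in different clothing: the paper also performs a Bernstein computation on $z=|\nabla u|^2$, but instead of a cutoff-and-maximum-point argument it derives (via Lemma~\ref{lemma_Bcal}) the pointwise differential inequality \eqref{ineq_HJ}, namely $\mathscr{A}_u(z)+a\,z^{m-q+2}\bigl[z^{(p-q)/2}+1\bigr]^{-2}\le C|\nabla z|^2/z$ on $\{z>0\}$, restricts to $\{z>1\}$ where the absorption term is comparable to $z^{m-p+2}$, shifts $\tilde z=z-1$ and invokes the Osserman-type Lemma~\ref{lem1}; no $\varepsilon$-regularization is needed because solutions are smooth on $\{|\nabla u|>0\}$ and the whole computation takes place there. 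Those differences are essentially cosmetic, and your schematic maximum-point inequality is consistent with the known outcome.

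The genuine gap is in your part (ii). With $K=0$ the term that previously had size $K^{(q-p)/2}$ becomes $v^{(q-p)/2}(\bar x)=v^{-(p-q)/2}(\bar x)$, a \emph{negative} power of $v$: it blows up precisely where $v$ is small, which is the only regime that matters for a Liouville theorem, and the global bound $v\le C$ furnished by (i) controls positive powers of $v$, not negative ones. Nor can this term be absorbed into the left-hand side $v^{(m-p+1)/2}(\bar x)$: that would require $v^{(m-q+1)/2}(\bar x)\ge c>0$, i.e.\ a positive lower bound on the gradient, which is the opposite of what you are trying to prove. The root of the trouble is that you normalized by the $p$-weight, which dominates only where the gradient is large; at small gradients the $q$-part leads, and accordingly the best decay a bootstrap can give is $|\nabla u(x_0)|\le C R^{-1/(m-q+1)}$, not the $R^{-1/(m-p+1)}$ you claim (still sufficient for Liouville, but obtained differently). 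The bootstrap can be repaired along these lines: once (i) gives $z\le C$ on $\mathbb R^N$, the factor $A=1+z^{(p-q)/2}$ in \eqref{ineq_HJ} is bounded, so $\mathscr{A}_u(z)+a'z^{m-q+2}\le C|\nabla z|^2/z$ holds on all of $\{z>0\}$ with a constant $a'>0$ and exponent $m-q+2>1$, and Lemma~\ref{lem1} applied with $\Omega=\mathbb R^N$ forces $z\equiv0$. The paper itself does not bootstrap at all: it repeats the part (i) argument on the superlevel sets $\{z>\varepsilon^{2/(p-q)}\}$, where $A\le(1+\varepsilon^{-1})z^{(p-q)/2}$, obtains $z\le\varepsilon^{2/(p-q)}+C_\varepsilon\,{\rm dist}(x,\partial\Omega)^{-2/(m-p+1)}$, and lets $\varepsilon\to0$.
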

 \begin{remark} It is clear from the statement that, in the Hamilton–Jacobi 
$(p,q)$-Laplacian equation, the leading term in the differential operator is actually the higher-order one, namely the 
$p$-Laplacian. 

Moreover, we emphasize that the Liouville property \mbox{\rm (ii)} does not follow directly from \mbox{\rm (i)}; 
rather, its proof requires an additional deduction and the conclusion then follows through a limiting procedure.
\end{remark}

In  the next result, we consider the second type of nonlinearity, namely, the ``product" one.
To this aim, we make use of the following threshold values

\begin{equation}
\label{defQ2}\mathcal Q_1:=
\frac{2(q-1)}{N}-\sqrt{\frac{4(q-1)^2}{N^2}-\mathcal{R}},
\end{equation}
\begin{equation}
\label{defQ1}\mathcal Q_2:=
\frac{2(q-1)}{N}+\sqrt{\frac{4(q-1)^2}{N^2}-\mathcal{R}},
\end{equation}
and
\begin{equation}
\label{defQ3}\mathcal Q_3:=\mathcal Q_2+\frac{\bigl[\frac2N(q-1)-s\left(\frac2N (p-1) +p-q\right)\bigr]^2}{s\bigl[\frac{s\mathcal R}{\mathcal Q_2}+\frac4N(p-1)\bigr]},
\end{equation}
where
\begin{equation}\label{R}
\mathcal{R}:=(p-q)\bigl[p-q+\frac4N(p-1)\bigr].
\end{equation}

\iffalse
{\cgr
\begin{equation*}
\mathcal Q_1:=\frac4N \frac{q-1}{\mathcal R}\end{equation*}
and 
\begin{equation}
\label{defQ2}
\mathcal Q_2:=\mathcal Q_1+\frac{\bigl\{\frac2N\bigl[q-1-s(p-1)\bigr]-s(p-q)\bigr\}^2}{s\biggl[\dfrac{s(\mathcal R-1)}{\mathcal Q_1}+\dfrac4N(p-1)\biggr]\mathcal R}
\end{equation}
where
$$\mathcal R=1+(p-q)\bigl[p-q+\frac4N(p-1)\bigr]. $$}
\fi
The expressions for these values highlight the significant complexity introduced by the nonhomogeneous nature of the operator.

\begin{theorem}\label{th_sm}
Let $\Omega\subset\mathbb R^N$,  $s>0$, $m\geq 0$,
\begin{equation}\label{lim_beta_2_pos} 
1-\frac{(p-q)(1+s)}{m+s-q+1}>0,
\quad\ m+s>p-1 \quad\mbox{and}\quad 4(q-1)^2\geq N^2\mathcal{R},
\end{equation} 
where $\mathcal{R}$ is defined by \eqref{R}. 
Denote,
$$\mathcal{Q}:=m+s-q+1.$$
Suppose one of the following assumptions holds
\begin{enumerate}
    \item[\rm(A)] $\mathcal Q_1<\mathcal{Q}<\mathcal Q_2$,
 \vspace{0,2cm}
 \item[\rm(B)] $\mathcal Q\in\{\mathcal Q_1, \mathcal Q_2\}$ and 
 $s<\dfrac{q-1}{p-1+\frac N2(p-q)}$, \vspace{0,2cm} {}
 \item[\rm(C)] 
 $s<\dfrac{q-1}{p-1+\frac N2(p-q)}$,  $ 0\leq m\leq q<p<m+1$ and
$$\mathcal{Q}\in\begin{cases} (\mathcal Q_2, \mathcal Q_3)\cup\bigg(\frac{N\big((1-a)\mathcal Q_1^2+\mathcal R\big)}{4(q-1)},\, \mathcal Q_1\bigg) &\quad\mbox{if}\quad a\leq 1,\\
(\mathcal Q_2,\mathcal Q_3)\cup \bigg(\frac{\mathcal R N}{4(q-1)},\,\mathcal Q_1\bigg) &\quad\mbox{if}\quad a>1,\end{cases}$$
 \end{enumerate}
where $\mathcal{Q}_1$, $\mathcal{Q}_2$, $\mathcal{Q}_3$ are given by \eqref{defQ1}-\eqref{defQ3}, respectively, and 
$$a=\frac{N}{s}\frac{\bigg[\frac{2}{N}(q-1-s(p-1))-s(p-q)\bigg]^2}{Ns\mathcal R+4(p-1)\mathcal Q_1}.$$
Then, the following hold:

\smallskip
 
(i) There exist positive constants $b, \gamma$ and $C=C(N,m,p,q,s)$,
 with
 \begin{equation}\label{b>0}
b>\max\biggl\{ 0, \frac{m-q+1}{m+s-q+1}\biggr\},
\end{equation}
such that any positive solution of 
\begin{equation}\label{main_sm}
-\Delta_pu-\Delta_q u= u^s|\nabla u|^m\quad\mbox{in}\quad \Omega, 
\end{equation} 
satisfies
\begin{equation}\label{1-8}
\left|\nabla u^{1/b}(x)\right|\leq \left(1+ C
\left({\rm dist}\left(x,\partial\Omega\right)\right)^{-\frac{2}{\gamma}}\right) \quad\forall \, x\in\Omega.
\end{equation}

(ii) Every nonnegative solution of \eqref{main_sm} is constant in $\mathbb R^N$.
\end{theorem}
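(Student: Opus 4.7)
The strategy is a direct Bernstein argument, suitably adapted to the nonhomogeneous $(p,q)$-Laplacian. The nonhomogeneity replaces the constant coefficients that appear in the linearized equation of the pure $p$-Laplacian case by functions of $u$ and $|\nabla u|^2$ which are, in general, unbounded from above; the threshold quantities $\mathcal{Q}_1,\mathcal{Q}_2,\mathcal{Q}_3$ are expected to encode the algebraic sign and discriminant conditions that make the resulting Bernstein inequality closable despite this loss of boundedness. The first step of the plan is the change of variable $v=u^{1/b}$ for a positive exponent $b$ satisfying \eqref{b>0}, to be tuned at the end. Expanding $\Delta_p(v^b)$ and $\Delta_q(v^b)$ via the chain rule and inserting in \eqref{main_sm} yields, after division by a suitable power of $v$, an equation of the schematic form
\begin{equation*}
-\Delta_p v - v^{\tau}\Delta_q v \;=\; A(v)|\nabla v|^p + B(v)|\nabla v|^q + C(v)|\nabla v|^{m+(b-1)(p-1)},
\end{equation*}
with $\tau=(b-1)(q-p)$ and explicit monomials $A,B,C$.

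Setting $z=|\nabla v|^2$ and differentiating this equation, then contracting with $\nabla v$ via the standard Bochner-type identity for the linearized $p$- and $q$-operators, should produce a degenerate elliptic inequality of the form $\mathcal{L}_v z \ge c_1\, z^{1+\gamma/2} - c_2$ for some exponent $\gamma>0$ and constants $c_1>0$, $c_2\ge 0$, where $\mathcal{L}_v$ combines the two linearized operators with coefficients depending on $v$ and $z$. The hypotheses (A)--(C) should emerge as the ranges of $\mathcal{Q}=m+s-q+1$ in which a certain quadratic form in the auxiliary variable $|\nabla v|^{p-q}$, arising in the reaction, can be made nonnegative for an admissible choice of $b$ and of an extra free parameter: the assumption $4(q-1)^2\ge N^2\mathcal{R}$ guarantees that the discriminant in \eqref{defQ1}--\eqref{defQ2} is nonnegative; (A) confines $\mathcal{Q}$ strictly between the roots $\mathcal{Q}_1$ and $\mathcal{Q}_2$; (B) handles the two boundary cases; and (C) captures the complementary region still reachable after an additional Young trade-off, which is why the further cutoff $\mathcal{Q}_3$ and the constant $a$ enter the picture.

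Once the pointwise inequality for $z$ is available, I would prove (i) by the classical cutoff argument: fix $x_0\in\Omega$ and $0<R\le \mathrm{dist}(x_0,\partial\Omega)$, choose a radial cutoff $\eta\in C^\infty_c(B_R(x_0))$, evaluate $\mathcal{L}_v(\eta^{\mu}z)\le 0$ at an interior maximum point of $\eta^{\mu}z$ on $B_R(x_0)$ for a well-chosen exponent $\mu$, and absorb the terms containing $\nabla\eta$ by Young's inequality to obtain $z(x_0)\le C(1+R^{-4/\gamma})$, which is \eqref{1-8}. For (ii), part (i) with $\Omega=\mathbb{R}^N$ only yields $|\nabla v|\le 1$, so, as already noted in the remark following Theorem \ref{th_HJ}, a further limit-passage is needed: the plan is to apply the interior estimate to rescaled solutions $v_R(x)=R^{-\alpha}v(x_0+Rx)$ on $B_1(0)$, with $\alpha$ chosen so that the rescaled equation is a controlled perturbation of the original one, and to pass to the limit $R\to\infty$, forcing $|\nabla v(x_0)|=0$ at every $x_0$. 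The principal technical difficulty throughout is the algebraic step of signing the Bernstein reaction: because the coefficients of $\mathcal{L}_v$ are unbounded in $|\nabla v|$, the quadratic form must be controlled uniformly, and the intricate shape of the admissible regions (A)--(C), including the appearance of $\mathcal{Q}_3$ and $a$ in (C), is precisely the record of the Young and discriminant manipulations needed to close the argument.
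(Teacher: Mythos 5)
Your plan for part (i) is essentially the paper's proof: the change of variable $v=u^{1/b}$, the Bochner/Bernstein inequality for $z=|\nabla v|^2$ with coefficients depending on $v$ and $z$ through the unbounded factor $A=1+|b|^{p-q}v^{(b-1)(p-q)}z^{(p-q)/2}$, the reduction to a quadratic trinomial in the auxiliary quantity $\zeta=v^{t+1}z^{(m-q)/2}$ whose discriminant must be made strictly negative by a choice of $b$ (equivalently $t$), the domination of the variable-coefficient quadratic by one with deterministic coefficients, and the case analysis that produces exactly the regions (A)--(C) with $\mathcal Q_1,\mathcal Q_2,\mathcal Q_3$ and $a$. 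Your cutoff-at-a-maximum derivation of the interior gradient bound is the content of the quoted Lemma~\ref{lem1}, so (i) is fine as a sketch, though you gloss over the verification that the resulting exponent $\gamma$ (built from $\upbeta_1,\upbeta_2$) is positive, which is where the hypotheses $1-\frac{(p-q)(1+s)}{m+s-q+1}>0$ and $p<m+1$ in case (C) are actually consumed.

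The genuine gap is in part (ii). You correctly observe that (i) on $\mathbb{R}^N$ only gives $|\nabla v|\le 1$, but the rescaling $v_R(x)=R^{-\alpha}v(x_0+Rx)$ you propose does not work as described: the $(p,q)$-Laplacian is not homogeneous, so under any such scaling the $p$- and $q$-parts acquire different powers of $R$, and $v_R$ no longer solves an equation of the form \eqref{main_sm} but one with $R$-dependent coefficients that degenerate or blow up as $R\to\infty$ (the paper stresses precisely that the operator is not scale free). To use the interior estimate on $v_R$ you would need the whole Bernstein machinery with constants uniform in these coefficients, which is not available and is not a ``controlled perturbation'' in any obvious sense; moreover you would still need $\alpha<1$ compatible with the reaction, which you do not justify. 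The paper's actual argument is much more elementary and avoids scaling entirely: the additive constant $1$ in \eqref{1-8} comes only from having truncated the Bernstein inequality to the set $\{z\ge 1\}$ (where $A$ is comparable to $z^{(p-q)/2}$); in part (ii) one repeats the same estimate on $\{z>\varepsilon^{2/(p-q)}\}$, obtaining
\begin{equation*}
z(x)\le \varepsilon^{\frac{2}{p-q}}+C_\varepsilon\left({\rm dist}(x,\partial\Omega)\right)^{-\frac{2}{\gamma}},
\end{equation*}
so that on $\Omega=\mathbb{R}^N$ the distance term disappears, $z\le \varepsilon^{2/(p-q)}$ for every $\varepsilon>0$, and letting $\varepsilon\to0$ gives $z\equiv0$. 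Without this (or some substitute for your unproved scaling step), your proof of the Liouville statement is incomplete.
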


\begin{remark} We point out that in the case of $p$-Laplacian, i.e., $p=q$ in \eqref{main_sm},  
$$ \mathcal Q_1=0,\quad \mathcal Q_2= \frac{4(p-1)}N, \quad  \mathcal Q_3=\frac{p-1}N\frac{(1+s)^2}s\quad \mbox{and} \quad \mathcal R=0.$$
In addition, the threshold for $s$ becomes exactly  1, so that  Theorem \ref{th_sm} reduces to Theorem 1.1 in \cite{CHZ}. 
 Furthermore, in the subcase $p=q=2$ Theorem \ref{th_sm}-(A) reduces to Theorem B-(i) in \cite{BV}, while  Theorem \ref{th_sm}-(ii) exhibits a smaller range for $m+s$, as emphasized in \cite{CHZ}, since, due to the complexity of the technique even in the $p$-Laplacian case, a second change a variable used in \cite{BV} has been avoided.

  Furthermore, we emphasize that the lower bounds for $\mathcal{Q}$ in Theorem $\ref{th_sm}$-$(C)$, in both cases $a \le 1$ and $a > 1$, are not optimal. They are introduced primarily to keep the statement concise, while still highlighting the appearance of a new range for $\mathcal{Q}$ when $p \ne q$, given by $(0,\mathcal Q_1)$. The optimal thresholds will become clear during the proof through a straightforward calculation.
  
Finally, when $m=0$, equation \eqref{main_sm} reduces to the $(p,q)$ generalized Lane-Emden equation. Thus,  to the best of our knowledge, this is a first result involving the (p,q)-Laplacian operator, in the direction of a Gidas–Spruck type theorem, as well as a result in the spirit of Serrin and Zou. 
 On the other hand, in the subcase $p=q=2$, Theorem \ref{th_sm}-(A) holds for $s<(N+4)/N$ which is smaller than the critical Sobolev exponent $2^*$ for $N>2$.

\end{remark}

Our next theorem addresses \eqref{main_sm} when $m>q$. In this case, if we follow a Bernstein method,
 the polynomials obtained in the process do not have constant coefficients (see for instance, \eqref{trin}), but functions as coefficients and these functions are not necessarily bounded. This creates a hurdle in adapting a Bernstein-type estimate similar to \cite{BV} to prove $u^{1+s}|\nabla u|^{m-q}$ to be bounded.  When $p=q$, Bernstein estimate was obtained in \cite{BV} and Liouville property was then established by using (scale free) weak-Harnack property for the superharmonic functions and half-Harnack inequality for an appropriate power of $(u-l)$, where $l$ is an appropriate constant, see \cite{BV} more details. In our set-up, these Harnack type estimates,  especially for inequalities, seem quite challenging and are not covered by the existing literature. Furthermore, our operator is not scale free due to its nonhomogeneity.
We therefore adopt a completely different strategy and, as a first attempt in the literature, restrict our attention to bounded solutions. The proof relies on an argument of Ishii–Lions type, originally introduced in \cite{IL90} to establish H\"{o}lder regularity of viscosity solutions for nondegenerate elliptic second-order equations. For an application of this method to nonlocal operators, we refer to the recent papers of Barles et al. \cite{BCI, BCCI}. This technique typically involves doubling the variables and introducing a penalization function that serves as a test function for the solution. In contrast to the standard Ishii–Lions method, our argument requires the H\"{o}lder constant of this test function to be sufficiently small. Together with the ellipticity of the equation, this condition yields the desired result.
A similar idea was employed in the context of nonlocal operators in \cite{BQT}.

\begin{theorem}\label{AB001}
All bounded solutions to $-\Delta_pu-\Delta_q u=f(u, \nabla u)$ in $\mathbb{R}^n$ are constants where
\begin{equation}\label{f_IL} |f(u, \nabla u)|\leq g(u) |\nabla u|^m \quad \text{and}\quad m>q >1,
\end{equation}
for some continuous function $g$.
\end{theorem}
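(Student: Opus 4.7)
We argue by contradiction. Suppose $u$ is a bounded nonconstant solution, and fix $x_0,y_0\in\mathbb R^n$ with $u(x_0)-u(y_0)>M/2$ where $M:=\sup u-\inf u>0$. Writing the equation in nondivergence form,
\[
-\operatorname{tr}\bigl(B(\nabla u)D^{2}u\bigr)=f(u,\nabla u),\qquad B(p):=\bigl(|p|^{p-2}+|p|^{q-2}\bigr)I+\bigl((p-2)|p|^{p-4}+(q-2)|p|^{q-4}\bigr)p\otimes p,
\]
the $C^1$ weak solution $u$ is a viscosity sub- and supersolution (on $\{\nabla u\neq 0\}$, which suffices for our argument). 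The plan is to apply the Ishii--Lions doubling-of-variables method with a test function whose H\"older constant $L$ is chosen sufficiently small: this brings the gradient at the contact points into the regime where the $q$-Laplacian part of $B$ dominates and outweighs the supercritical source $|\nabla u|^m$, which is where the assumption $m>q$ is crucial.

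Fix $\alpha\in\bigl(\tfrac{m-q}{m-q+1},1\bigr)$, a nonempty interval since $m>q$, and for $L,\varepsilon>0$ put
\[
\Phi_\varepsilon(x,y):=u(x)-u(y)-L|x-y|^{\alpha}-\varepsilon(|x|^{2}+|y|^{2}).
\]
Choosing $L$ with $L|x_0-y_0|^{\alpha}<M/4$ and $\varepsilon$ small, $\Phi_\varepsilon(x_0,y_0)>M/4>0$, and the boundedness of $u$ together with the coercive penalization forces $\Phi_\varepsilon$ to attain its maximum at some $(\bar x,\bar y)=(\bar x_\varepsilon,\bar y_\varepsilon)$ with $\bar x\neq\bar y$ and $L|\bar x-\bar y|^{\alpha}\le 2\|u\|_\infty$. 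Setting $\tilde p:=L\alpha|\bar x-\bar y|^{\alpha-2}(\bar x-\bar y)$ and $\phi(z):=|z|^{\alpha}$, Ishii's lemma produces symmetric matrices $X,Y$ with $(\tilde p+2\varepsilon\bar x,X)\in\overline J^{2,+}u(\bar x)$, $(\tilde p-2\varepsilon\bar y,Y)\in\overline J^{2,-}u(\bar y)$, and, by testing the matrix inequality on $(\xi,0)$, $(0,\eta)$, and $(\xi,\xi)$, the one-sided bounds $X\le L\,D^{2}\phi(\bar x-\bar y)+C\varepsilon I$, $Y\ge -L\,D^{2}\phi(\bar x-\bar y)-C\varepsilon I$, and $Y\ge X-C\varepsilon I$. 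Since $B(\tilde p)$ and $D^{2}\phi(\bar x-\bar y)$ share the eigenbasis $(e,e_\perp)$ with $e:=(\bar x-\bar y)/|\bar x-\bar y|$ parallel to $\tilde p$, and the large negative eigenvalue $\alpha(\alpha-1)|\bar x-\bar y|^{\alpha-2}$ of $D^{2}\phi$ in direction $e$ is multiplied by the eigenvalue $(p-1)|\tilde p|^{p-2}+(q-1)|\tilde p|^{q-2}$ of $B(\tilde p)$, dropping the nonnegative $p$-Laplacian contribution leads to
\[
\operatorname{tr}\bigl(B(\tilde p)(Y-X)\bigr)\ge 2L\alpha(1-\alpha)(q-1)|\tilde p|^{q-2}|\bar x-\bar y|^{\alpha-2}-o_{\varepsilon}(1).
\]

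Subtracting the sub- and supersolution inequalities at $\bar x$ and $\bar y$ and using the continuity of $B$, the bound $|f(u,\nabla u)|\le g(u)|\nabla u|^m$, and the boundedness of $u$ also yields
\[
\operatorname{tr}\bigl(B(\tilde p)(Y-X)\bigr)\le 2\sup_{|t|\le\|u\|_\infty}g(t)\,|\tilde p|^{m}+o_{\varepsilon}(1).
\]
Combining the two estimates, substituting $|\tilde p|=L\alpha|\bar x-\bar y|^{\alpha-1}$, and passing to $\varepsilon\downarrow 0$ along a subsequence of $(\bar x_\varepsilon,\bar y_\varepsilon)$, we arrive at $L^{-(m-q+1)}\le C\,|\bar x-\bar y|^{\mu}$ with $\mu:=\alpha(m-q+1)-(m-q)>0$ and $C=C(n,p,q,m,\alpha,\|u\|_\infty,g)$; inserting the a priori bound $|\bar x-\bar y|\le(2\|u\|_\infty/L)^{1/\alpha}$ reduces this to $L^{-(m-q)/\alpha}\le C'$, which is false for $L$ sufficiently small since $m>q$. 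This contradiction forces $u$ to be constant. The main technical obstacle is to implement Ishii's lemma in the non-uniformly elliptic $(p,q)$-setting and to extract one-sided eigenvalue bounds on $X,Y$ that isolate the positive contribution of the $q$-Laplacian in the bad direction of $D^{2}\phi$; the required smallness of $L$---the ``sufficiently small H\"older constant'' mentioned in the introduction---is precisely what lets the $q$-Laplacian ellipticity beat the supercritical source.
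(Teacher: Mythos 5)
Your overall strategy coincides with the paper's: double variables with a concave test function $L|x-y|^{\alpha}$, $\alpha\in\bigl(\tfrac{m-q}{m-q+1},1\bigr)$ (exactly the paper's condition \eqref{gamma_range} on $\gamma$), invoke the Crandall--Ishii--Jensen lemma, retain only the $(q-1)|\tilde p|^{q-2}$ ellipticity in the direction $\bar x-\bar y$, and beat the source $g(u)|\nabla u|^{m}$ by taking the H\"older constant small; your final exponent bookkeeping ($\mu=\alpha(m-q+1)-(m-q)>0$, then $L^{-(m-q)/\alpha}\le C'$) is arithmetically correct. Two points, however, are not justified as written. The minor one: you assert that a $C^{1}$ weak solution is a viscosity sub/supersolution on $\{\nabla u\neq 0\}$; for the nonhomogeneous $(p,q)$-operator this requires an argument (the paper proves it in Lemma~\ref{L-vis}), though it is a fixable point.

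The substantive gap is the error control in your penalized doubling. You localize with $\varepsilon(|x|^{2}+|y|^{2})$, repeatedly invoke ``$o_{\varepsilon}(1)$'' terms, and finally ``pass to $\varepsilon\downarrow 0$ along a subsequence''. But the contact points only satisfy $\varepsilon(|\bar x_{\varepsilon}|^{2}+|\bar y_{\varepsilon}|^{2})\le 2\|u\|_{\infty}$, so they may escape to infinity, and since no a priori gradient bound for $u$ is available, nothing prevents $|\bar x_{\varepsilon}-\bar y_{\varepsilon}|\to 0$, i.e.\ $|\tilde p|=L\alpha|\bar x_{\varepsilon}-\bar y_{\varepsilon}|^{\alpha-1}\to\infty$, with no quantitative relation to $\varepsilon$. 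In that regime your discarded terms are not $o_{\varepsilon}(1)$ relative to the retained good term $(q-1)|\tilde p|^{q-2}L\alpha(1-\alpha)|\bar x-\bar y|^{\alpha-2}$: the Hessian of the penalization contributes $C\varepsilon I$, which traced against $B(\tilde p)$ produces a term of order $\varepsilon|\tilde p|^{p-2}$; the Ishii correction $\varepsilon_{I}(D^{2}\phi)^{2}$ has norm of order $\varepsilon_{I}L^{2}|\bar x-\bar y|^{2\alpha-4}$; and the gradient shifts $2\varepsilon\bar x$, $2\varepsilon\bar y$ (only $O(\sqrt{\varepsilon})$ in size) perturb $B(\cdot)$ and $|\cdot|^{m}$ at a large argument. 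Since $p$ can exceed $q$ by an arbitrary amount, these errors can dominate the $q$-term when $|\bar x_{\varepsilon}-\bar y_{\varepsilon}|$ is small, and your limit step is not licensed. The paper's proof is structured precisely to avoid this: it localizes with a bounded cutoff $\psi(x/R)$ (so the gradient perturbation is an explicit $O(1/R)$ and every error is an explicit power of $1/R$), chooses the Ishii parameter adapted to the test Hessian ($\varepsilon=\kappa\delta^{-1}|\bar a|^{2-\gamma}$ with $\kappa$ small, so the matrix error scales exactly like the good term and is absorbed), and compares each remaining error with the good term uniformly over $0<|\bar a|\le R/4$ via the $R$-dependent coefficient $\delta(R)$, never taking a limit in the doubling parameter at fixed $R$. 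To repair your version you must do the analogous bookkeeping: fix the Ishii parameter proportional to $\|D^{2}\phi\|^{-1}$, bound every error by a small constant times an explicit power of $|\bar x-\bar y|$ and absorb it into the $q$-term, and then conclude from the inequality at one fixed small $\varepsilon$ using only $|\bar x-\bar y|\le(2\|u\|_{\infty}/L)^{1/\alpha}$, rather than sending $\varepsilon\to 0$.
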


\begin{remark}
Note that the conclusion in  Theorem~\ref{th_sm}(A)-(B) also
hold for $m>q$ and does not need any boundedness assumption on the solution. On the other hand, \eqref{lim_beta_2_pos}  in Theorem~\ref{th_sm} restricts $p-q$ from being arbitrarily large.
\end{remark}

\iffalse
{\cb \begin{remark} We point out the  lower bound $q\ge2$ does not appear, due the different technique used which does not require uniform ellipticity for  the auxiliary differential operator involved in Bernstein technique. {\cred NOT TRUE}
    \end{remark}}
\fi

\medskip
In the next two theorems we focus on the nonlinearities which are sum of $u^s$ and $|\nabla u|^m$.  In the first theorem we obtain an estimate for the growth of any solution, in the second the Liouville property is reached.

\begin{theorem}\label{thm+}
Let $\Omega\subset\mathbb R^N$. Assume $m-p+2>0$ and
\begin{equation}\label{bound+}
s>\max\{ q-1,1\}, \qquad m>\max\biggl\{\frac{qs}{s+1}, 2s\biggr\}.
\end{equation}

Then, for any $M>0$, there exists a positive constant $C=C(N,m,p,q,M)$ such that any positive solution of \begin{equation}
\label{main_M+}-\Delta_pu-\Delta_q u= u^s+M|\nabla u|^m\quad\mbox{in}\quad \Omega, 
\end{equation} 
satisfies
\begin{equation}\label{1-2}
\left|\nabla u(x)\right|\leq C\left(1+
{\rm dist}\left(x,\partial\Omega\right)^{-\frac{1}{m-p+2}}\right)\end{equation}
for all  $x\in\Omega$.
Especially, any positive solution of \eqref{main_M+} in $\mathbb R^N$ has at most a linear growth at infinity, being in force
\begin{equation}\label{lin_growth}
\left|\nabla u(x)\right|\leq C
%M^{-\frac{p+1}{(p+1)q-mp}},
\quad x\in\mathbb R^N.
\end{equation}
\end{theorem}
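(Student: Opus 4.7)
The plan is to use a direct Bernstein technique adapted to the nonhomogeneous $(p,q)$-Laplacian operator, in the same spirit as the arguments leading to Theorems~\ref{th_HJ} and~\ref{th_sm}. Under the structural conditions \eqref{bound+}, the Hamilton--Jacobi-like term $M|\nabla u|^m$ dominates the reaction $u^s$ at points of large gradient; this is what ultimately produces a Hamilton--Jacobi type scaling exponent in \eqref{1-2} (with the shift from $m-p+1$ to $m-p+2$ reflecting the additional absorption of the $u^s$ contribution through the Bochner identity).

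First I would introduce $z := |\nabla u|^2$, differentiate \eqref{main_M+} in each coordinate $x_k$, multiply the result by $u_k$, and sum over $k$. Collecting terms yields a Bochner-type identity which, after elementary manipulations, can be recast as a second-order differential inequality of the schematic form
\begin{equation*}
-\mathcal{L}_u z \;\leq\; C\,s\,u^{s-1} z + (\text{transport terms in }\nabla z) \;-\; c\,H(u,z),
\end{equation*}
where $\mathcal{L}_u$ denotes the linearization of $-\Delta_p - \Delta_q$ along $u$ (its principal coefficients behaving as $|\nabla u|^{p-2}+|\nabla u|^{q-2}$), and $H$ is a favorable nonnegative quantity obtained by combining the Cauchy--Schwarz bound $(\Delta u)^2\leq N|D^2u|^2$ with the equation \eqref{main_M+} itself, i.e.\ by replacing $(\Delta_p+\Delta_q)u$ with $-u^s-M|\nabla u|^m$. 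The dominant term in $H$ behaves like $z^{m-p+2}$, up to weights.

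Next, following the usual Bernstein device, fix a cutoff $\phi\in C^\infty_c(\Omega)$ with $|\nabla\phi|\lesssim \phi/\mathrm{dist}(\cdot,\partial\Omega)$ and consider the auxiliary function
\begin{equation*}
v(x) := z(x)\,\phi(x)^{2\gamma}, \qquad \gamma:=\tfrac{1}{m-p+2}.
\end{equation*}
If $v$ attains its maximum at an interior point $x_0$, then $\nabla v(x_0)=0$ and $\mathcal{L}_u v(x_0)\leq 0$. Inserting these conditions into the inequality of Step~1 and using Young's inequality to absorb every subprincipal contribution -- most notably $s u^{s-1} z$ -- into the favorable $z^{m-p+2}$ term produces a pointwise bound of the form $v(x_0)\leq C\bigl(1+\phi(x_0)^{2\gamma-2}\bigr)$, which rearranges exactly into \eqref{1-2}. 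The global gradient bound \eqref{lin_growth} then follows by applying \eqref{1-2} on balls $B_R(x)$ for arbitrary $x\in\mathbb R^N$ and sending $R\to\infty$: since $m-p+2>0$, the singular factor $R^{-1/(m-p+2)}$ vanishes.

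The main obstacle is the absorption step. One must dominate $s\,u^{s-1} z$ by a small multiple of $z^{m-p+2}$, which requires a quantitative control of $u$ by a power of $z^{1/2}$ on the level sets where $v$ is near its maximum. This is precisely where the strict inequalities $m>2s$ and $m>qs/(s+1)$ in \eqref{bound+} enter the picture: the first ensures that a Young inequality with admissible exponents produces the strict absorption, while the second, together with $s>\max\{q-1,1\}$, prevents the a priori unbounded $q$-Laplacian coefficients in $\mathcal{L}_u$ from overwhelming the $p$-Laplacian contributions at large gradient, in full analogy with the thresholds that controlled the nonhomogeneity of the operator in Theorems~\ref{th_HJ} and~\ref{th_sm}.
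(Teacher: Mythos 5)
There is a genuine gap at the step you yourself flag as the main obstacle. You propose to absorb the bad term $s\,u^{s-1}z$ into a small multiple of $z^{m-p+2}$ by ``quantitative control of $u$ by a power of $z^{1/2}$ on the level sets where $v$ is near its maximum''. No such control is available: the solution $u$ is not assumed bounded, the maximum point of the auxiliary function $z\phi^{2\gamma}$ carries no information about the size of $u$ there, and the equation gives no pointwise comparison between $u$ and $|\nabla u|$. Consequently the absorption $s u^{s-1}z\le \varepsilon z^{m-p+2}+C$ cannot be performed uniformly in $u$, and the whole maximum-point argument collapses. The paper's proof avoids this by keeping the \emph{full square} of the right-hand side coming from the Bochner term: after using the equation, the good quantity is $\frac1{NA^2}\bigl(u^{2s}z^{2-q}+M^2z^{m-q+2}+2Mu^sz^{\frac m2-q+2}\bigr)$, and the troublesome term $\frac sA u^{s-1}z^{(4-q)/2}$ is absorbed by a \emph{two-step} Young inequality: first (using $s>1$, with exponents $2s/(s-1)$ and $2s/(s+1)$) into $\varepsilon\,u^{2s}z^{2-q}/A^2$ plus a pure power of $z$, and then (using $s>q-1$ and $m>qs/(s+1)$, which make the second pair of exponents admissible) into $\varepsilon\,z^{m-q+2}/A^2$ plus $C_\varepsilon A^{-2(m-2s)/(s(m-q)+m)}$, where $m>2s$ makes the exponent of $A$ positive so this last term is bounded by a constant since $A\ge 1$. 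Thus the hypotheses \eqref{bound+} enter through these Young exponents, not through taming the $q$-Laplacian coefficients as you suggest; without the $u^{2s}z^{2-q}$ term your scheme has nothing to pay for $u^{s-1}$.

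Two secondary points. First, the linearized operator with principal part $|\nabla u|^{p-2}+|\nabla u|^{q-2}$ is not uniformly elliptic; the paper first normalizes (dividing by the common weight) to the operator $\mathscr A_u(z)=-\Delta z-\frac DA\langle D^2z\nabla u,\nabla u\rangle/|\nabla u|^2$, whose ellipticity constants are controlled by \eqref{estim_DA_EA}, and only then applies the Osserman-type Lemma~3.1 of \cite{BV} on the set $\{z>1\}$ (rather than a cutoff/maximum-point argument). Second, the exponent $m-p+2$ does not come out of the Bochner identity directly: the good term is $z^{m-q+2}/A^2$, and one must restrict to $\{z>1\}$, where $A^{-2}\ge \tfrac14 z^{-(p-q)}$, to convert it into $z^{m-p+2}$; this restriction (and the shift $\tilde z=z-1$) is also what produces the additive constant $1$ in \eqref{1-2}. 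These are fixable details, but the absorption step as you describe it is not, so the proposal does not constitute a proof.
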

 \begin{theorem} \label{thm+_v^b} Let $\Omega\subset\mathbb R^N$.
Assume $N(p-q)<2(q-1)$,
\begin{equation}\label{sqrt_cond}\Delta_{p,q}:=(N+2)^2(q-1)^2-N(N+4)(p-1)^2-4N(p-q)^2>0.\end{equation}
Define
\begin{equation}\label{S-}
S_-:=\frac{(N+2)(q-1)-\sqrt{\Delta_{p,q}}}N \end{equation}
and
\begin{equation}\label{S+}
S_+:=\frac{(N+2)(q-1)+\sqrt{\Delta_{p,q}}}N\end{equation}
Assume
\begin{equation}\label{s_bound_new}
\max\{S_-,p-1\}<s<S_+,\end{equation}
\begin{equation}\label{beta_2_pos_sum} 1-\frac{(p-q)(1+s)}{s-q+1}>0\end{equation}
and
\begin{equation}\label{m_bound_new} 0<m\le \frac{N+2}N(q-1)
\end{equation}
Then,  there exist positive constants $b$, $\gamma$ and $C=C(N,m,p,q,s)$,
such that any positive solution of 
\eqref{main_M+} satisfies
\eqref{1-8}.

%\begin{equation}\label{1-8}\left|\nabla u^{1/b(x)\right|\leq 1+C\left({\rm dist}\left(x,\partial\Omega\right)\right)^{-\frac2{\gamma}}\quad\forall \, x\in\Omega.\end{equation}
In addition, every nonnegative solution of \eqref{main_M+} is constant in $\mathbb R^N$.
\end{theorem}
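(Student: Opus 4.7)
The argument parallels the strategy of Theorem~\ref{th_sm}, adapted to the sum-type nonlinearity. First, I would perform the change of unknown $v=u^{1/b}$, where $b>0$ is a parameter to be tuned, so that $u=v^b$ and $\nabla u=b v^{b-1}\nabla v$. Substituting into \eqref{main_M+} yields a quasilinear equation for $v$ whose leading coefficients are no longer constant but involve different powers of $v$ --- the unavoidable feature of the non-homogeneous $(p,q)$-operator. The value of $b$ is selected so that the contribution from $u^s$ to the right-hand side balances the leading $p$-Laplacian-type term on the left after the substitution.

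Second, I would apply a Bernstein-type manipulation to the transformed equation: differentiate with respect to $x_i$, multiply by $\partial_i v$, sum in $i$, take a suitable power of $z=|\nabla v|^2$, and test against a cutoff $\phi$ supported in $\Omega$. After integration by parts one obtains an integral inequality whose principal part is a quadratic form in weighted integrals of $|\nabla z|^2$ and $z^{\gamma+1}$. The central task is making this quadratic form coercive. A direct (but delicate) computation shows that the discriminant condition \eqref{sqrt_cond} is exactly what yields a non-empty interval of admissible Bernstein exponents; that the range $\max\{S_-,p-1\}<s<S_+$ in \eqref{s_bound_new} identifies the compatible values of $s$; and that \eqref{beta_2_pos_sum}, together with $N(p-q)<2(q-1)$, prevents the degeneration of the $v$-dependent (possibly unbounded) coefficients produced by the substitution.

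Third, the perturbation $M|\nabla u|^m$ becomes, under $u=v^b$, a term of the form $Mb^m v^{m(b-1)}|\nabla v|^m$. The subcriticality assumption \eqref{m_bound_new} is precisely what allows one to absorb this contribution into the coercive form via Young's inequality, without spoiling positivity --- morally, it forces the gradient perturbation to remain subcritical with respect to the $u^s$ term in the Bernstein scale. A standard choice of cutoff in $B_{\mathrm{dist}(x,\partial\Omega)/2}(x)$ then converts the integral inequality into the pointwise bound \eqref{1-8}. For the Liouville part, I would apply the estimate on $\Omega=B_R(x_0)$ and let $R\to\infty$: the distance term vanishes, yielding a global control on $\nabla v$ from which, together with the equation \eqref{main_M+} (whose right-hand side is then forced to collapse), one concludes $\nabla u\equiv 0$, i.e.\ $u$ is constant, along the same lines as the concluding step after Theorem~\ref{th_sm}.

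The main obstacle I anticipate is the algebraic bookkeeping in Steps~1--2: selecting $b$ and the Bernstein exponent $\gamma$ so that \emph{all} of the constraints \eqref{sqrt_cond}, \eqref{s_bound_new}, \eqref{beta_2_pos_sum} and \eqref{m_bound_new} can be realised simultaneously, and verifying that the unbounded $v$-dependent coefficients generated by the non-homogeneity do not destroy the coercivity of the quadratic form. This is precisely the ``non-trivial adaptation of the Bernstein method'' referred to in the abstract, and is what distinguishes the argument from the $p$-Laplacian analogue of \cite{FSZ,VB2}.
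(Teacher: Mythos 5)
Your skeleton (set $v=u^{1/b}$, run a Bernstein-type argument on $z=|\nabla v|^2$, deduce the gradient bound, then conclude Liouville) matches the paper's strategy, but your final Liouville step is genuinely flawed. The estimate you would obtain is of the form $|\nabla u^{1/b}(x)|\le 1+C\,\mathrm{dist}(x,\partial\Omega)^{-2/\gamma}$: because the $(p,q)$-operator is nonhomogeneous, the coercive Osserman-type term is only available on the set $\{z\ge 1\}$ (where the unbounded factor $A$ can be compared with a power of $z$), and the additive constant cannot be removed. Hence applying the estimate on $B_R$ and letting $R\to\infty$ only yields $|\nabla v|\le 1$ on $\mathbb R^N$; this neither forces the right-hand side of \eqref{main_M+} to collapse nor implies that $u$ is constant. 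The paper explicitly warns (Remark after Theorem~\ref{th_HJ}) that the Liouville property does not follow directly from the gradient estimate: one must rerun the argument on the level sets $\{z>\varepsilon^{2/(p-q)}\}$ for every $\varepsilon>0$, obtain $z\le\varepsilon^{2/(p-q)}+C\,\mathrm{dist}^{-2/\gamma}$, and only then send $R\to\infty$ and $\varepsilon\to0$. Without this two-parameter limiting procedure your passage from the local estimate to constancy does not go through.

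Beyond that, the quantitative core is asserted rather than carried out, and the mechanism for the gradient term is misdescribed. In the paper, \eqref{m_bound_new} is not used to absorb $M|\nabla u|^m$ by Young's inequality: after the substitution, the terms it generates assemble into a separate quadratic in $\zeta_2=v^{(b-1)(m-q+1)+1}z^{(m-q)/2}$ whose coefficients are nonnegative precisely because $2\bigl(\tfrac1N+\tfrac12\bigr)\tfrac EA\ge\tfrac{N+2}{N}(q-1)\ge m$ and $b\ge1$, so this block is simply discarded, and all coercivity must come from the $u^s$-block. For that block one has to prove a discriminant inequality \emph{uniformly} in the solution-dependent, unbounded coefficients $A,D,E$; the paper does this pointwise (B\"ochner formula, the auxiliary uniformly elliptic operator $\mathscr A_v$, and Lemma~\ref{lem1}, not an integral inequality with cutoffs) by bounding $D/A$ and $E/A$ between $q-2,\,p-2$ and $q-1,\,p-1$, majorizing the resulting quadratic in $\tau=b(s-q+1)+q-1$ by one with deterministic coefficients, and checking that its leading coefficient is negative exactly under \eqref{sqrt_cond}--\eqref{s_bound_new}, so that taking $\tau$ (hence $b$) large works; \eqref{beta_2_pos_sum} then enters only to ensure the exponent $1+\gamma$ in the final inequality exceeds $1$. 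These are the points where the hypotheses actually do their work, and they are precisely the ones your outline leaves as ``algebraic bookkeeping''; as written, none of them is verified.
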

\begin{remark}  We observe that when $p=q$,  condition \eqref{sqrt_cond} is automatically satisfied since $\Delta_{p,p}=4(p-1)^2>0$. As a consequence, we obtain
$S_-=p-1$, yielding the same lower bound for $s$ in the case of $p$-Laplacian (see \cite[Theorem 1.9]{FSZ}),  while $S_+=\frac{N+4}N(p-1)$, which is slightly larger than the threshold $\frac{N+3}{N-1}(p-1)$ found in the same setting in \cite{FSZ}.  Moreover, still in the case 
$p=q$, the threshold appearing in \eqref{m_bound_new} coincides with that of the  $p$-Laplacian case discussed in \cite{FSZ}.
\end{remark}

\smallskip
Very recently in \cite{BBF}, the authors of this paper have studied Liouville  properties of various differential inequalities of the form
$$-\Delta_p u-\Delta_q u\geq f(u,\nabla u) \quad\text{in}\quad\Omega,$$
where $\Omega$ is any exterior domain. In particular, Liouville properties of supersolutions to \eqref{main_sm} have been discussed in \cite{BBF} for  $s\geq 0$, $m\geq 0$. 

\medskip

This paper is organized as follows. Section 2 provides preliminary material and establishes elementary results used throughout the subsequent sections. In Section 3, we prove Theorem~\ref{th_HJ}. Sections 4 and 5 address equations with product nonlinearities, containing the proofs of Theorem~\ref{th_sm} and Theorem~\ref{AB001}, respectively. Finally, Section 6 discusses equations involving the sum of nonlinearities, proving Theorem~\ref{thm+} and Theorem~\ref{thm+_v^b}.

\section{Preliminary results}

Any solution, as defined in Definition~\ref{defi-sol}, to the equation $-\Delta_p u-\Delta_q u =f(u,\nabla u)$ in $\Omega$ with $f\in L^\infty(\Omega)$ is known to be in $C^{1, \alpha}_{\rm loc}(\Omega)$, 
see \cite[Theorem 1.7]{Lib91}. Thus, any $C^{1}$ solution of $-\Delta_p u-\Delta_q u= g(u) |\nabla u|^m$ is in
$C^{1, \alpha}_{\rm loc}$ for $g$ continuous and for any $m>0$. In addition, suppose that $g\in C^{k, \gamma}_{\rm loc}(\mathbb{R})$ for some $k\in\mathbb{N}$, $\gamma\in (0,1)$ and consider a ball
$B_r(x_0)$ such that $|\nabla u|>0$ in $\overline{B_r(x_0)}$. Then, from \cite[Theorem~4.5.2]{LU68},  we obtain 
$u\in W^{2,2}_{\rm loc}(B_r(x_0))$. More precisely, if we set $a_i(\sigma) =|\sigma|^{p-2}\sigma_i + |\sigma|^{q-2}\sigma_i$
and $a(u, \sigma)=g(u) |\sigma|^m$, for $\sigma\in\mathbb{R}^n$, we have
\begin{align*}
 0<\lambda |\xi|^2 \leq   \sum_{i, j}\frac{\partial a_i(\nabla u(x))}{\partial\sigma_j}\xi_i\xi_j &\leq \Lambda |\xi|^2,
 \\
 \sum_{i=1}^N \left(|a_i(\nabla u)|+ \Bigl|\frac{\partial a(u(x), \nabla u(x))}{\partial \sigma_i}\Bigr|\right)(1+|\sigma|) + |a(u, \nabla u)|
 &+ \Bigl|\frac{a(u(x), \nabla u(x))}{\partial u}\Bigr| \leq \Lambda (1+ |\sigma|^2)
 \end{align*}
  in $B_r(x_0)$, for some constants $\lambda, \Lambda$. These two conditions are enough to apply \cite[Theorem~4.5.2]{LU68}, giving us
$u\in W^{2,2}_{\rm loc}(B_r(x_0))$. Now we can apply \cite[Theorem~4.6.3]{LU68}, with a similar reasoning as above
(see the discussion on page 282 of \cite{LU68}), to conclude that
$u\in C^{k+2}(B_r(x_0))$.

\medskip

Before proceeding further, in order to simplify the notation, we introduce three functions which will play a crucial role in the proofs below.
Precisely, from now on let
$$\begin{aligned}&A:=1+|b|^{p-q}v^{(b-1)(p-q)}z^{(p-q)/2},\\
&D:=q-2+(p-2)|b|^{p-q}v^{(b-1)(p-q)}z^{(p-q)/2},\\
&E:=q-1+(p-1)|b|^{p-q}v^{(b-1)(p-q)}z^{(p-q)/2},
\end{aligned}$$
so that
\begin{equation}\label{estim_DA_EA} A\ge1, \qquad 0\le q-2\le\frac DA\le p-2,\qquad q-1\le\frac EA\le p-1.\end{equation}
In particular, it holds  
$$E+p-q=(p-1)A\qquad\text{and}\qquad D+p-q=(p-2)A,$$
and in the special case $p=q$ the above functions reduce to $A=2$, $E=2(p-1)$ and $D=2(p-2)$.
\begin{lemma}\label{lem1}
\rm{ Let $\Omega\subset\mathbb R^N$, $N\geq1$ and $m>1$. Assume that $v$ is continuous, $|\nabla v|>0$, and $w$ is continuous and nonnegative in $\Omega$ and $C^1$ on the set $\mathcal{W}_+=\left\{x\in\Omega:w(x)>0\right\}$. Define the operator
$$w\rightarrow\mathscr{A}_v(w):=-\Delta w-\frac DA\frac{\left<D^2w\nabla v,\nabla v\right>}{|\nabla v|^2}.$$
If $w$ satisfies, for some $\xi>1$, a constant $C>0$ and a real number $c_0$,
$$\mathscr A_v(w)+Cw^\xi\leq c_0\frac{\left|\nabla w\right|^2}{w}$$
on each connected component of $\mathcal{W}_+$, then
$$w(x)\leq c_{N,\xi,c_0}\left({\rm dist}\left(x,\partial\Omega\right)\right)^{-\frac{2}{\xi-1}},\quad\forall x\in\Omega.$$
In particular, $w\equiv0$ if $\Omega=\mathbb R^N$.
}
\end{lemma}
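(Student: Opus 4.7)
The plan is a barrier/comparison argument in the spirit of the classical Osserman--Keller estimate for $-\Delta w + Cw^\xi \le 0$, adapted to the variable-coefficient but uniformly elliptic operator $\mathscr A_v$. The first observation is that
$$\mathscr A_v(w) \;=\; -\mathrm{tr}\bigl(M_v\, D^2 w\bigr), \qquad M_v := I \,+\, \frac{D}{A}\,\frac{\nabla v\otimes\nabla v}{|\nabla v|^2},$$
so $M_v$ is symmetric with eigenvalue $1$ in every direction orthogonal to $\nabla v$ and eigenvalue $1+D/A$ along $\nabla v$. By \eqref{estim_DA_EA} the latter lies in $[q-1,p-1]$ independently of $v$ and $|\nabla v|$, so $M_v$ is positive definite and $\mathscr A_v$ is uniformly elliptic with constants depending only on $p$ and $q$. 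All of what follows is independent of the auxiliary function $v$.

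Fix $x_0\in\Omega$ and $R\in\bigl(0,\mathrm{dist}(x_0,\partial\Omega)\bigr)$, set $\alpha := 2/(\xi-1)$, and consider the radial barrier
$$\Phi(x) := K\bigl(R^2 - |x-x_0|^2\bigr)^{-\alpha}, \qquad x\in B_R(x_0),$$
with $K>0$ to be chosen. I will show $w\le\Phi$ on $B_R(x_0)$ once $K$ is sufficiently large. Suppose otherwise: since $w$ is bounded on $\overline{B_R(x_0)}$ by continuity and $\Phi\to+\infty$ at $\partial B_R$, the function $w-\Phi$ attains a positive maximum at an interior point $\bar x\in B_R(x_0)$. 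Then $w(\bar x)>\Phi(\bar x)>0$, so $\bar x\in\mathcal W_+$, and the classical second-order conditions (or their viscosity counterpart, obtained by testing with the smooth $\Phi$) give $\nabla w(\bar x)=\nabla\Phi(\bar x)$ and $D^2 w(\bar x)\le D^2\Phi(\bar x)$. Positive-definiteness of $M_v$ then yields $\mathrm{tr}\bigl(M_v(D^2\Phi-D^2w)\bigr)\ge 0$, hence $\mathscr A_v(w)(\bar x)\ge\mathscr A_v(\Phi)(\bar x)$. Combining with the hypothesis and the strict inequalities $w^\xi>\Phi^\xi$ and $|\nabla\Phi|^2/w<|\nabla\Phi|^2/\Phi$, I obtain
$$\mathscr A_v(\Phi)(\bar x) \,+\, C\,\Phi(\bar x)^\xi \;<\; c_0\,\frac{|\nabla\Phi(\bar x)|^2}{\Phi(\bar x)}.$$

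The final step is to choose $K$ so that this inequality is globally violated on $B_R$. Setting $t := R^2-|\bar x-x_0|^2$ and using the identity $\alpha\xi=\alpha+2$, a direct computation gives
$$\Phi^\xi = K^\xi\, t^{-(\alpha+2)}, \qquad \frac{|\nabla\Phi|^2}{\Phi}\le 4K\alpha^2 R^2\, t^{-(\alpha+2)},$$
and the uniform bound on $\|M_v\|$ together with $t\le R^2$ yields $|\mathscr A_v(\Phi)|\le c_{N,p,q}\,KR^2\,t^{-(\alpha+2)}$. Factoring out $t^{-(\alpha+2)}$, the displayed inequality reduces to $K^{\xi-1}<\tilde c\, R^2$ for some $\tilde c=\tilde c(N,\xi,p,q,c_0,C)$. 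Choosing, for instance, $K=(2\tilde c)^{1/(\xi-1)}\, R^{2/(\xi-1)}$ contradicts this, so $w\le\Phi$ throughout $B_R(x_0)$. Evaluating at $x_0$ gives $w(x_0)\le K\,R^{-2\alpha}=C'\,R^{-2/(\xi-1)}$, and letting $R\uparrow\mathrm{dist}(x_0,\partial\Omega)$ produces the stated pointwise bound. When $\Omega=\mathbb R^N$ the distance is infinite, forcing $w\equiv 0$.

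The main obstacle is not the comparison scheme itself, which is standard, but the preliminary step of certifying that, despite the $v$- and $|\nabla v|$-dependent coefficient $D/A$, the operator $\mathscr A_v$ remains uniformly elliptic with bounds independent of $v$. This is exactly what \eqref{estim_DA_EA} supplies; it is the only place where the nonhomogeneous $(p,q)$-structure enters the proof and is the reason an Osserman--Keller-type a priori estimate survives in this setting.
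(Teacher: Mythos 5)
Your proof is correct in substance, but it takes a different route from the paper: the paper's own proof is essentially a two-line reduction — it checks, exactly as you do, that \eqref{estim_DA_EA} makes the coefficient matrix $a_{ij}=\delta_{ij}+\frac{D}{A}\frac{v_{x_i}v_{x_j}}{|\nabla v|^2}$ uniformly elliptic with constants $\min\{1,q-1\}$ and $\max\{1,p-1\}$ independent of $v$, and then invokes the known Osserman--Keller-type estimates of \cite[Lemma 2.2]{VBVGH} and \cite[Lemma 3.1]{BV} (with $\beta=0$, $\alpha=C$) for uniformly elliptic operators with a $|\nabla w|^2/w$ right-hand side. You instead reprove that estimate from scratch with the radial barrier $K(R^2-|x-x_0|^2)^{-2/(\xi-1)}$ and a touching argument; the barrier computations ($\alpha\xi=\alpha+2$, the bounds on $\Phi^\xi$, $|\nabla\Phi|^2/\Phi$ and $\mathscr A_v(\Phi)$) are right, and the structural input you isolate — uniform ellipticity independent of $v$ — is precisely the only thing the paper actually proves. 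What your version buys is self-containedness and an explicit constant, which honestly depends on $C,p,q$ as well as $N,\xi,c_0$ (the paper's notation $c_{N,\xi,c_0}$ understates this); what the citation buys is brevity. Two small repairs: the step $c_0|\nabla\Phi|^2/w<c_0|\nabla\Phi|^2/\Phi$ requires $c_0\ge0$, but one may assume this without loss of generality since replacing $c_0$ by $\max\{c_0,0\}$ only weakens the hypothesis (or carry $c_0^+$ into $\tilde c$); and since $w$ is only assumed $C^1$ on $\mathcal{W}_+$, the second-order comparison at the maximum point must indeed be read in the viscosity sense you mention — the same interpretation implicit in the cited lemmas — while in the paper's applications $w=|\nabla v|^2$ is smooth on $\{w>0\}$, so the classical argument applies verbatim.
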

\begin{proof} The proof follows from  a combination of  \cite[Lemma 2.2]{VBVGH} and \cite[Lemma  3.1]{BV}, the latter with $\beta=0$ and $\alpha=C$ (also see \cite[Lemma 2.1]{FSZ}). Indeed, it is enough to observe that the operator
$$z\to\mathscr A_v(w)=\sum_{i=1}^N\biggl(\delta_{i,j}+\frac DA \frac{v_{x_i}v_{x_j}}{|\nabla v|^2}\biggr)w_{{x_i}x_j}(z):=\sum_{i=1}^N a_{i,j}w_{{x_i}x_j}(z),$$
is uniformly elliptic, indeed
thanks to \eqref{estim_DA_EA} we have $$\min\{1,q-1\}|\xi|^2\le \sum_{i=1}^N a_{i,j}\xi_i\xi_j\le \max\{1,p-1\}|\xi|^2$$
for all $\xi\in\mathbb R^N$ and $1<q<p$.
\end{proof}
\begin{lemma}\label{lemma_Bcal} Let $u$ be a nonnegative solution of \eqref{main},  let $v=u^{1/b}$ with  $b\in\mathbb R\setminus\{0\}$ and 
$z:=|\nabla v|^2$. Denote with $\mathcal Bu=\Delta_pu+\Delta_qu$.

Then, the following inequality holds 
$$\begin{aligned} \frac12\mathscr{A}_v&(z) +\frac {1}{Nb^{2(q-1)}}\frac{\bigl(\mathcal Bu\bigr)^2}{A^2}
\frac1{v^{2(b-1)(q-1)}z^{q-2}}\\
&\le 
\frac{1}{b|b|^{q-2}}\frac{\mathcal Bu}{A}\frac{z^{-q/2}}{v^{(b-1)(q-1)}}\biggl(\frac 1N+\frac 12\biggr)\frac DA\langle\nabla z,\nabla v\rangle\\
& 
\quad+2\frac{b-1}{b|b|^{q-2}}\frac{\mathcal Bu}A\frac1{v^{(b-1)(q-1)}z^{(q-2)/2}}\biggl(\frac 1N+\frac 12\biggr)\frac EA\frac zv
\\
&\quad -\frac {1}{b|b|^{q-2}}\frac{\langle\nabla\bigl(\mathcal Bu\bigr),\nabla v\rangle}{A}
\frac{1}{v^{(b-1)(q-1)}z^{(q-2)/2}}
\\
&\quad+(b-1)\biggl[-\frac 1N\frac{DE}{A^2}+{\frac12}\frac{(p-q)^2}A\frac{A-1}A+\frac EA\biggr]\frac{\langle\nabla z,\nabla v\rangle}v
\\
&
\quad+(b-1)\biggl[-\frac{b-1}N\frac{E^2}{A^2}+(b-1)\frac{(p-q)^2}A\frac{A-1}A- \frac EA\biggr]\frac{z^2}{v^2}\\
&\quad -\frac 12\biggl[{ -}\frac12\frac{(p-q)^2}A\frac{A-1}A+ \frac DA+\frac1{2N}\frac{D^2}{A^2}\biggr]\frac{\langle\nabla z,\nabla v\rangle^2}{z^2}+\frac14\frac DA\frac{|\nabla z|^2}{z}\quad \mbox{on}\quad \{z>0\}.
\end{aligned}$$

\end{lemma}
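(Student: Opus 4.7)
This is a pointwise Bochner/Bernstein-type identity for the $(p,q)$-Laplacian, obtained by applying the $A$-weighted operator $\mathscr{A}_v$ to $z=|\nabla v|^2$ and eliminating the Hessian-of-$v$ terms in favour of $\mathcal{B}u$ and $\langle \nabla \mathcal{B}u, \nabla v\rangle$. I would proceed in three steps: first re-express $\mathcal{B}u$ in terms of $v$ and its first two derivatives; second, apply Bochner to $z$; and third, close the estimate with a refined Kato inequality.

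\textbf{Step 1: the equation in $v$-variables.} From $u=v^b$ a direct computation yields, for $s\in\{p,q\}$,
$$\Delta_s u = b|b|^{s-2} v^{(b-1)(s-1)} z^{(s-2)/2}\left[\Delta v + (s-2)\frac{\langle D^2 v\,\nabla v,\nabla v\rangle}{z} + (b-1)(s-1)\frac{z}{v}\right].$$
Summing over $s\in\{p,q\}$, factoring out $b|b|^{q-2} v^{(b-1)(q-1)} z^{(q-2)/2}$, and using the algebraic identities $E+(p-q)=(p-1)A$ and $D+(p-q)=(p-2)A$ that follow from the definitions of $A,D,E$, the bracketed expression collapses to $A\,\mathcal{L}v$, where
$$\mathcal{L}v := \Delta v + \frac{D}{A}\frac{\langle D^2 v\,\nabla v, \nabla v\rangle}{z} + (b-1)\frac{E}{A}\frac{z}{v},$$
so that $\mathcal{B}u = b|b|^{q-2} v^{(b-1)(q-1)} z^{(q-2)/2} A\,\mathcal{L}v$.

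\textbf{Step 2: Bochner on $z$ and elimination of third derivatives.} The Bochner identity $\tfrac12\Delta z = |D^2 v|^2 + \langle \nabla\Delta v,\nabla v\rangle$ together with the companion identity
$$\tfrac{1}{2}\frac{\langle D^2 z\,\nabla v, \nabla v\rangle}{z} = -\frac{|D^2 v\,\nabla v|^2}{z} + \frac{1}{z}\bigl\langle \nabla \langle D^2 v\,\nabla v, \nabla v\rangle,\, \nabla v\bigr\rangle$$
produce a closed formula for $-\tfrac12\mathscr{A}_v(z)$ in terms of $|D^2 v|^2$, $|D^2 v\,\nabla v|^2/z$, and two third-order contractions. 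I would rewrite the combination of those third-order terms as $\langle \nabla \mathcal{L}v,\nabla v\rangle$ plus lower-order pieces arising from differentiating the prefactors $A$, $v^{(b-1)(q-1)}$, $z^{(q-2)/2}$ and the $(b-1)\tfrac{E}{A}\tfrac{z}{v}$ correction inside $\mathcal{L}v$; Step~1 then turns $\langle \nabla \mathcal{L}v,\nabla v\rangle$ into $\langle \nabla \mathcal{B}u, \nabla v\rangle/(b|b|^{q-2} A v^{(b-1)(q-1)} z^{(q-2)/2})$. Combined with $\nabla z = 2 D^2 v\,\nabla v$ and $\langle \nabla z,\nabla v\rangle = 2\langle D^2 v\,\nabla v,\nabla v\rangle$, this produces the $\langle \nabla(\mathcal{B}u),\nabla v\rangle/A$ term and the cross terms $\tfrac{\mathcal{B}u}{A}\langle \nabla z,\nabla v\rangle$, $\tfrac{\mathcal{B}u}{A}\tfrac{z}{v}$ with their $(\tfrac{1}{N}+\tfrac{1}{2})$ coefficients on the RHS.

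\textbf{Step 3: refined Kato and coefficient bookkeeping.} Close the estimate through a refined Kato-type inequality adapted to the $A$-weighted metric,
$$|D^2 v|^2 + \frac{D}{A}\frac{|D^2 v\,\nabla v|^2}{z} \geq \frac{1}{N}\Bigl(\mathcal{L}v - (b-1)\tfrac{E}{A}\tfrac{z}{v}\Bigr)^2 + (\text{positive correction}),$$
where the correction, computed via $|\nabla z|^2 = 4|D^2 v\,\nabla v|^2$, carries the $\tfrac14\tfrac{D}{A}\tfrac{|\nabla z|^2}{z}$, $-\tfrac12\tfrac{D}{A}\tfrac{\langle \nabla z,\nabla v\rangle^2}{z^2}$ and $(p-q)^2(A-1)/A^2$ pieces appearing on the RHS. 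Substituting $\mathcal{L}v = \mathcal{B}u/(b|b|^{q-2} A v^{(b-1)(q-1)} z^{(q-2)/2})$ in the $\tfrac1N(\cdot)^2$ term produces the LHS factor $\tfrac{1}{N b^{2(q-1)}}(\mathcal{B}u)^2/(A^2 v^{2(b-1)(q-1)} z^{q-2})$, while cross-multiplying $\mathcal{L}v$ against $(b-1)\tfrac{E}{A}\tfrac{z}{v}$ yields the remaining $\tfrac{z^2}{v^2}$ and mixed $\tfrac{z}{v}\langle\nabla z,\nabla v\rangle$-type contributions on the RHS. The main obstacle is precisely this bookkeeping: the $(p-q)^2(A-1)/A^2$ corrections are the algebraic signature of the nonhomogeneity of $-\Delta_p-\Delta_q$ and vanish when $p=q$; reproducing them with the exact stated coefficients requires repeatedly invoking $E+(p-q)=(p-1)A$ and $D+(p-q)=(p-2)A$ at the right moments and carefully distinguishing the contributions of the derivatives of $A$, $v^{(b-1)(q-1)}$ and $z^{(q-2)/2}$ at every stage.
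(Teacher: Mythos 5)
Your Steps 1 and 2 are sound and in fact coincide with the paper's route: the paper writes $\mathcal Bu=b|b|^{q-2}v^{(b-1)(q-1)}\bigl[z^{-1+q/2}A\Delta v+(b-1)Ez^{q/2}v^{-1}+\frac D2 z^{(q-4)/2}\langle\nabla z,\nabla v\rangle\bigr]$, solves for $\Delta v$, and differentiates that identity, which is exactly your $\mathcal{L}v$ reorganization. The genuine gap is the engine of your Step 3: the ``refined Kato-type inequality'' is false in general. Test it on a rank-one Hessian aligned with the gradient, $D^2v=\lambda\, e_1\otimes e_1$, $\nabla v=\sqrt{z}\,e_1$: the left-hand side equals $\lambda^2\bigl(1+\frac DA\bigr)$ while the first term on the right equals $\frac{\lambda^2}{N}\bigl(1+\frac DA\bigr)^2$, so even with zero correction you need $\frac DA\le N-1$; since $\frac DA$ can be made arbitrarily close to $p-2$ (let $z\to\infty$) and the lemma has no hypothesis linking $p$ to $N$, the inequality fails whenever $p>N+1$, and with a genuinely positive correction it already fails at $\frac DA=N-1$. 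There is also a sign problem: expanding $-\frac12\mathscr{A}_v(z)$ with Bochner and your companion identity yields the second-order block $|D^2v|^2-\frac DA\frac{|D^2v\,\nabla v|^2}{z}$, i.e.\ the weighted Hessian term enters with a \emph{minus} sign; the term $\frac14\frac DA\frac{|\nabla z|^2}{z}=\frac DA\frac{|D^2v\,\nabla v|^2}{z}$ on the right of the lemma is precisely that negative term moved across, so no coercivity in the gradient direction is available (or needed), contrary to what your inequality asserts (and the correction you describe, containing $-\frac12\frac DA\frac{\langle\nabla z,\nabla v\rangle^2}{z^2}$, is not positive).

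What actually closes the proof — and this is the paper's argument — uses only the single inequality $|D^2v|^2\ge\frac1N(\Delta v)^2$ (inside the Bochner formula); everything else is exact bookkeeping. One solves the equation for $\Delta v$, namely $\Delta v=\frac{1}{b|b|^{q-2}}\frac{\mathcal Bu}{A}\frac{z^{1-q/2}}{v^{(b-1)(q-1)}}-(b-1)\frac EA\frac zv-\frac12\frac DA\frac{\langle\nabla z,\nabla v\rangle}{z}$, then (i) squares this identity, keeping only the $\frac1N(\mathcal Bu)^2$ square on the coercive side and moving every cross term to the right (this is where the $\bigl(\frac1N+\frac12\bigr)$ factors and the $-\frac{b-1}{N}\frac{E^2}{A^2}$, $-\frac1N\frac{DE}{A^2}$, $-\frac1{2N}\frac{D^2}{A^2}$ pieces originate), and (ii) differentiates the same identity to evaluate $\langle\nabla\Delta v,\nabla v\rangle$, which is where $\nabla(D/A)$ and $\nabla(E/A)$ produce the $(p-q)^2\frac{A-1}{A^2}$ terms — so these belong to your third-order elimination in Step 2, not to a Kato-type correction. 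Replace Step 3 by this exact substitution-and-expansion (no weighted Kato inequality exists or is needed) and the argument matches the lemma; as written, your proof rests on an inequality that cannot hold for all $p>q>1$.
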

\begin{proof}
By the definition of $v$ we have
%$$u=v^b,\quad v>0,\quad b\in\mathbb R\setminus\{0\},$$ so that$Du=bv^{b-1}Dv$
$$\begin{aligned}\Delta_pu
%&=b|b|^{p-2}\,\mbox{div}\bigl(v^{(b-1)(p-1)}|Dv|^{p-2}Dv\bigr)\\&
 =b|b|^{p-2}\,\bigl\{v^{(b-1)(p-1)}&[|Dv|^{p-2}\Delta v+(p-2)|\nabla v|^{p-4}\left<D^2v\nabla v,\nabla v\right>]\\&\qquad \qquad+
(b-1)(p-1)v^{(b-1)(p-1)-1}|Dv|^p\bigr\},
%\\&=-b|b|^{p-2}\,\bigl[v^{(b-1)(p-1)}|Dv|^{p-2}|\nabla v|^{m-2}\Delta v+(p-2)|\nabla v|^{p-4}\left<D^2v\nabla v,\nabla v\right>\bigr)\bigr].
\end{aligned}$$
so that, replacing $|\nabla v|^2=z$, we get
$$\Delta_pu=b|b|^{p-2}\,v^{(b-1)(p-1)}z^{-1+p/2}
\bigl[\Delta v+(b-1)(p-1)zv^{-1}
+(p-2)z^{-1}\left<D^2v\nabla v,\nabla v\right>\bigr].$$
Consequently, using that $2\left<D^2v\nabla v,\nabla v\right>=\left<\nabla z,\nabla v\right>$, we have
\begin{align}\label{Aug-20}
\Delta_pu+\Delta_qu&
%=b|b|^{p-2}v^{(b-1)(p-1)-1}z^{-1+p/2}\bigl[v\Delta v+(b-1)(p-1)z
%+\frac{p-2}2 vz^{-1}\bigl<\nabla z,\nabla v \bigr>\bigr]\\&\quad\, %+b|b|^{q-2}v^{(b-1)(q-1)-1}z^{-1+q/2}
%\bigl[v\Delta v+(b-1)(q-1)z
%+\frac {q-2}2 vz^{-1}\bigl<\nabla z,\nabla v\bigr>\bigr]\\&
=b\biggl[|b|^{p-2}v^{(b-1)(p-1)}z^{-1+p/2}+|b|^{q-2}v^{(b-1)(q-1)}z^{-1+q/2}\biggr]\Delta v \nonumber
\\ 
&\quad 
+b(b-1)\biggl[|b|^{p-2} (p-1)v^{(b-1)(p-1)-1}z^{p/2}+|b|^{q-2}(q-1) v^{(b-1)(q-1)-1}z^{q/2}\biggr]
\nonumber
\\&\quad +\frac b2\biggl[(p-2)|b|^{p-2}v^{(b-1)(p-1)}z^{p/2}+(q-2)|b|^{q-2}v^{(b-1)(q-1)}z^{q/2}\biggr]
\frac{\bigl<\nabla z,\nabla v\bigr>}{z^2}
\nonumber\\
&=b|b|^{q-2}v^{(b-1)(q-1)}\biggl[z^{-1+q/2}A\,\Delta v
+(b-1)Ez^{q/2}v^{-1}+\frac D2z^{(q-4)/2}
\langle\nabla z,\nabla v\rangle\biggr]
\end{align}
yielding the following expression for $\Delta v$ being $u$ a solution of  \eqref{main}
 \begin{equation}\label{lapla}
\Delta v=\frac {1}{b|b|^{q-2}}\frac{\mathcal Bu}{A}
\frac{z^{1-q/2}}{v^{(b-1)(q-1)}}-
(b-1)\frac EA\frac zv-\frac12 \frac DA\frac{\bigl<\nabla z,\nabla v\bigr>}{z}.
    \end{equation}
A routine calculation gives
$$\begin{aligned}\nabla_v\biggl(\frac EA\biggr)&=-\frac1{A^2}|b|^{p-q}(b-1)(p-q)v^{(b-1)(q-1)-1}z^{\frac{p-q}2}[E-(p-1)A]Dv\\&=\frac{A-1}{A^2}(b-1)(p-q)^2\frac{\nabla v}v,\end{aligned}$$
$$\begin{aligned}\nabla_z\biggl(\frac EA\biggr)=-\frac1{A^2}|b|^{p-q}\frac{p-q}2v^{(b-1)(q-1)}z^{\frac{(p-q)}2}[E-(p-1)A]Dv=\frac{A-1}{A^2}\frac{(p-q)^2}2\frac{\nabla z}z,\end{aligned}$$
and analogously
$$\nabla_v\biggl(\frac DA\biggr)=\frac{A-1}{A^2}(b-1)(p-q)^2\frac{\nabla v}v,\qquad 
\nabla_z\biggl(\frac DA\biggr)=\frac{A-1}{A^2}\frac{(p-q)^2}2\frac{\nabla z}z.$$
Therefore,
$$\begin{aligned}\nabla \Delta v=&
\frac {1}{b|b|^{q-2}}\frac{\nabla\bigl(\mathcal Bu\bigr)}{A}
\frac{z^{1-q/2}}{v^{(b-1)(q-1)}}-\frac{q-2}2\frac {1}{b|b|^{q-2}}\frac{\mathcal Bu}{A}\frac{z^{-q/2}}{v^{(b-1)(q-1)}}\nabla z
\\&
-(p-q)\frac {|b|^{p-2q+2}}b\frac{\mathcal Bu}{A^2}v^{(b-1)(p-2q+1)}z^{1-q+p/2}\biggl[(b-1)\frac{\nabla v}v +\frac1{2}\frac{\nabla z}z\biggr]
\\&
-\frac {(b-1)(q-1)}{b|b|^{q-2}}\frac{\mathcal Bu}{A}
\frac{z^{1-q/2}}{v^{(b-1)(q-1)+1}}\nabla v
\\&-(b-1)^2|b|^{p-q}(p-q)^2\frac 1{A^2}v^{(b-1)(p-q)-2}z^{1+(p-q)/2}\,
\nabla v\\&-\frac12(b-1)|b|^{p-q}(p-q)^2\frac 1{A^2}\,v^{(b-1)(p-q)-1}z^{(p-q)/2} 
\, \nabla z\\&-
(b-1)\frac EA\biggl(\frac{\nabla z}v-z\frac{\nabla v}{v^2}\biggr)-\frac12 \frac DA\frac{\nabla\bigl(\langle\nabla z,\nabla v\rangle\bigr)}{z}+\frac12 \frac DA\frac{\langle\nabla z,\nabla v\rangle}{z^2}\nabla z
 \\&-\frac 12(b-1)(p-q)^2\frac{A-1}{A^2}\frac{\langle \nabla z,\nabla v\rangle}{zv}\nabla v
\\& -\frac 14\frac {A-1}{A^2}(p-q)^2\frac{\bigl<\nabla z,\nabla v\bigr>}{z^2}\nabla z.
\end{aligned}$$
This yields

%{\cb we could directly replace the last two lines  with those in blu accordingly to the expressions of $\nabla(D/A)$.}

$$\begin{aligned}\nabla \Delta v=&
\frac {1}{b|b|^{q-2}}\frac{\nabla\bigl(\mathcal Bu\bigr)}{A}
\frac{z^{1-q/2}}{v^{(b-1)(q-1)}}-\frac{q-2}2\frac {1}{b|b|^{q-2}}\frac{\mathcal Bu}{A}\frac{z^{-q/2}}{v^{(b-1)(q-1)}}\nabla z
\\&-\frac {(p-q)(b-1)}{b|b|^{q-2}}\frac{A-1}{A^2}\mathcal Bu \, \frac{z^{\frac{2-q}2}}{v^{(b-1)(q-1)+1}}\nabla v
-\frac 12\frac {p-q}{b|b|^{q-2}}\frac{A-1}{A^2}\mathcal Bu\frac{z^{-\frac q2}}{v^{(b-1)(q-1)}} \nabla z
\\&
-\frac {(b-1)(q-1)}{b|b|^{q-2}}\frac{\mathcal Bu}{A}
\frac{z^{\frac{2-q}2}}{v^{(b-1)(q-1)+1}} \nabla v
\\&
-(b-1)^2(p-q)^2\frac {A-1}{A^2}
\frac{z}{v^2}\nabla v-\frac12(b-1)(p-q)^2\frac {A-1}{A^2}\,\frac {\nabla z}v\\&
-(b-1)\frac EA\biggl(\frac{\nabla z}v-z\frac{\nabla v}{v^2}\biggr)-\frac12 \frac DA\frac{\nabla\bigl(\langle\nabla z,\nabla v\rangle\bigr)}{z}+\frac12 \frac DA\frac{\langle\nabla z,\nabla v\rangle}{z^2}\nabla z
\\&-\frac 12(b-1)(p-q)^2\frac{A-1}{A^2}\frac{\langle \nabla z,\nabla v\rangle}{zv}\nabla v
\\&-\frac 14\frac {A-1}{A^2}(p-q)^2\frac{\bigl<\nabla z,\nabla v\bigr>}{z^2}\nabla z.
\end{aligned}$$
Consequently, using $z=|\nabla v|^2$,
$$\begin{aligned}\langle\nabla \Delta v,\nabla v\rangle&=\frac {1}{b|b|^{q-2}}\frac{\langle\nabla\bigl(\mathcal Bu\bigr),\nabla v\rangle}{A}
\frac{z^{1-q/2}}{v^{(b-1)(q-1)}}-\frac{q-2}2\frac {1}{b|b|^{q-2}}\frac{\mathcal Bu}{A}\frac{z^{-q/2}}{v^{(b-1)(q-1)}}\langle\nabla z,\nabla v\rangle
\\&-\frac {(b-1)(p-q)}{b|b|^{q-2}}\frac{A-1}A\frac{\mathcal Bu}{A}
\frac{z^{\frac{4-q}2}}{v^{(b-1)(q-1)+1}}
-\frac {(b-1)(q-1)}{b|b|^{q-2}}\frac{\mathcal Bu}{A}
\frac{z^{\frac{4-q}2}}{v^{(b-1)(q-1)+1}} 
\\&-\frac 12\frac {p-q}{b|b|^{q-2}}\frac{A-1}{A^2}\mathcal Bu\frac{z^{-\frac q2}}{v^{(b-1)(q-1)}} \langle\nabla z,\nabla v\rangle -(b-1)\biggl[(b-1)\frac{(p-q)^2}{A^2}(A-1)-\frac EA\biggr]\frac{z^2}{v^2}
\\&-(b-1)\biggl[\frac{(p-q)^2}{ 2 A^2}(A-1)+\frac EA\biggr]\frac{\bigl<\nabla z,\nabla v\bigr>}{v}
\\&+\frac12\biggl[-\frac 12\frac{(p-q)^2}{A^2}(A-1)+\frac DA\biggr]\frac{\langle\nabla z,\nabla v\rangle^2}{z^2}-\frac12 \frac DA\frac{\langle\nabla\langle\nabla z,\nabla v\rangle,\nabla v\rangle}{z}.
\end{aligned}$$
On the other hand,
%\begin{equation}\label{Lapl_square}
$$\begin{aligned}\bigl(\Delta v\bigr)^2=&
 \frac {1}{b^{2(q-1)}}\frac{\bigl(\mathcal Bu\bigr)^2}{A^2}
\frac{z^{2-q}}{v^{2(b-1)(q-1)}}+
(b-1)^2\frac {E^2z^2}{A^2v^2}+\frac14 \frac {D^2}{A^2}\frac{\langle\nabla z,\nabla v\rangle^2}{z^2}
\\&-2E\frac {b-1}{b|b|^{q-2}}\frac{\mathcal Bu}{A^2}\frac{z^{1-q/2}}{v^{(b-1)(q-1)}}\frac{z}{v}+\frac {DE}{A^2}(b-1)\frac{\langle\nabla z,\nabla v\rangle}v\\&-
\frac {D}{A^2}\frac {1}{b|b|^{q-2}}\mathcal Bu \frac{z^{-q/2}}{v^{(b-1)(q-1)}}\langle\nabla z,\nabla v\rangle.
\end{aligned}$$
%\end{equation}
Using  B$\ddot{\rm o}$chner formula, we have
	$$ \frac{1}{2}\Delta z \geq\frac{1}{N}(\Delta v)^2+\left<\nabla \Delta v, \nabla v\right>,
$$
and by 
$$\langle\nabla\langle\nabla z,\nabla v\rangle,\nabla v\rangle=
\left<D^2z\nabla v,\nabla v\right>+\frac12|\nabla z|^2,$$
since $|b|^{p-q}v^{(b-1)(p-q)}z^{(p-q)/2}=A-1$, we have

$$\begin{aligned} \frac12\Delta z& -\frac {1}{Nb^{2(q-1)}}\frac{\bigl(\mathcal Bu\bigr)^2}{A^2}
\frac1{v^{2(b-1)(q-1)}z^{q-2}}
\\&\ge
-\frac{1}{b|b|^{q-2}}\frac{\mathcal Bu}{A}\frac{z^{-q/2}}{v^{(b-1)(q-1)}}\biggl[\frac1N\frac D{A}+\frac{q-2}2+\frac{p-q}2\frac{A-1}A\biggr]\langle\nabla z,\nabla v\rangle
\\& \quad
-\frac{b-1}{A^2}\biggl[-\frac{DE}N+{ \frac12}(p-q)^2(A-1)+EA\biggr]\frac{\langle\nabla z,\nabla v\rangle}v
\\&\quad +\frac 12\biggl[{\cb -}\frac12\frac{(p-q)^2}{A^2}(A-1)+\frac DA+\frac1{2N} \frac {D^2}{A^2}\biggr]\frac{\langle\nabla z,\nabla v\rangle^2}{z^2}
\\&\quad-\frac{b-1}{b|b|^{q-2}}\frac1{v^{(b-1)(q-1)}z^{(q-2)/2}}\biggl[ \frac{2}{N} \frac{E}{A}+q-1+(p-q)\frac{A-1}A\biggr]\frac{\mathcal Bu}A\frac zv
%\\&
%\quad+(b-1)\frac{z^2}{v^2}\biggl[\frac{q-1}A+(p-1)\bigl[1-(b-1)(p-q)\bigr]
%\frac{A-1}A\\&\quad\qquad\qquad\qquad+(b-1)(p-q)\frac{A-1}A\frac EA+\frac{b-1}N\frac{E^2}{A^2}\biggr]
\\&\quad -\frac{b-1}{A^2}\biggl[-\frac{b-1}NE^2+(b-1)(p-q)^2(A-1)- EA\biggr]\frac{z^2}{v^2} \\&\quad +\frac {1}{b|b|^{q-2}}\frac{\langle\nabla\bigl(\mathcal Bu\bigr),\nabla v\rangle}{A}
\frac{1}{v^{(b-1)(q-1)}z^{(q-2)/2}}
-\frac14\frac DA\frac{|\nabla z|^2}{z}
-\frac12\frac DA\frac{\langle D^2z\nabla v,\nabla v\rangle}{|\nabla v|^2}.\end{aligned}$$
Now, considering the definition of the operator   
$$\mathscr{A}_v(z):=-\Delta z-\frac DA\frac{\langle D^2z\nabla v,\nabla v\rangle}{|\nabla v|^2},$$
and using $D+p-q=  (p-2)A$ and $E+p-q=  (p-1)A$,
we obtain the required inequality being
$$\frac1N\frac D{A}+\frac{q-2}2+\frac{p-q}2\frac{A-1}A=\biggl(\frac 1N+\frac 12\biggr)\frac DA$$
and
$$\frac2N\frac {E}{A}+q-1+(p-q)\frac{A-1}A=2\biggl(\frac 1N+\frac 12\biggr)\frac EA.$$
\end{proof}

\section{The Hamilton Jacobi type case}
In this section we deal with equation \eqref{main_HJ}.
In this particular case, for any  solution $u$  of \eqref{main_HJ}, the change of variable $v=u^{1/b}$ will not be used, that is we consider $b=1$, and because of this we also do not require solution to be nonnegative. Therefore, taking $z=|\nabla u|^2$,  the functions $A,D,E$ become
$$A:=z^{(p-q)/2}+1, \qquad
D:=(p-2)z^{(p-q)/2}+q-2,$$
$$E:=(p-1)z^{(p-q)/2}+q-1.$$
Furthermore, the operator 
$$\mathscr{A}_u(z)=-\Delta z-\frac DA\frac{\left<D^2z\nabla u,\nabla u\right>}{|\nabla u|^2},$$
by Lemma \ref{lemma_Bcal}, satisfies   the following inequality
\begin{equation}\label{ineq_b=1}\begin{aligned} \frac12\mathscr{A}_v(z)& +\frac 1{N}\frac{\bigl(\mathcal Bu\bigr)^2}{A^2}
z^{2-q}
\\
&\le 
\frac{\mathcal Bu}{A}z^{-q/2}\biggl(\frac1N+\frac12 \biggr)\frac D{A}\langle\nabla z,\nabla u\rangle
-\frac{\langle\nabla\bigl(\mathcal Bu\bigr),\nabla u\rangle}{A}
z^{(2-q)/2} 
\\
&-\frac 1{2}\biggl[-\frac12\frac{(p-q)^2}A\frac{A-1}A+\frac DA+\frac1{2N}\frac{D^2}{A^2}\biggr]\frac{\langle\nabla z,\nabla u\rangle^2}{z^2}+\frac14\frac DA\frac{|\nabla z|^2}{z}\quad \mbox{on}\quad \{z>0\},
\end{aligned}\end{equation}
for any nonnegative solution $u$  of \eqref{main_HJ}.

\medskip{\it {\bf Proof of Theorem \ref{th_HJ}}.} We first replace $\mathcal Bu=-|\nabla u|^{m}=-z^{m/2}$ in \eqref{ineq_b=1}, yielding
$$\begin{aligned} \frac12\mathscr{A}_u(z) +\frac 1{N}\frac{z^{m-q+2}}{A^2}&\le
-\frac{z^{(m-q)/2}}{A}\biggl(\frac1N+\frac12 \biggr)\frac D{A}\langle\nabla z,\nabla u\rangle
\\&\quad +\frac m2\frac{z^{(m-q)/2}}{A}\langle\nabla z,\nabla u\rangle
+ C_2\frac{|\nabla z|^2}{z},
\end{aligned}$$
where, by \eqref{estim_DA_EA} and being $z=|\nabla u|^2$,  we have used
\begin{equation}\label{ineq_C_1}\begin{aligned}\biggl|\frac 1{2}\biggl[-\frac12\frac{(p-q)^2}A\frac{A-1}A+\frac DA+\frac1{2N}\frac{D^2}{A^2}\biggr]\frac{\langle\nabla z,\nabla v\rangle^2}{z^2}\biggr| 
\le C_1\frac{|\nabla z|^2}z,
\end{aligned}\end{equation}
with\begin{equation}\label{C_1}C_1=\frac14(p-q)^2+\frac{|p-2|}2\biggl(1+\frac{p-2}{2N}\biggr),\qquad C_2=\frac 14|p-2|+C_1.\end{equation}
Furthermore,  by \eqref{estim_DA_EA}, estimating as follows
$$\begin{aligned}\frac{z^{(m-q)/2}}{A}\biggl|\biggl(\frac1N+\frac12 \biggr)\frac D{A}-\frac m2\biggr||\langle\nabla z,\nabla u\rangle|
%\\&
&\le 
\frac{1}{A}\biggl|\biggl(\frac1N+\frac12 \biggr)\frac D{A}-\frac m2\biggr|\frac{|\nabla z||\nabla u|}zz^{(m-q+2)/2}
\\&\le \varepsilon\frac1{A^2}z^{m-q+2}+C_\varepsilon \frac{|\nabla z|^2}{z},
\end{aligned}$$
we reach
$$\frac12\mathscr{A}_u(z) +\biggl(\frac 1{N}-\varepsilon\biggr )\frac1{A^2}z^{m-q+2}\le  C\frac{|\nabla z|^2}{z}. $$
%C=\biggl(\frac14\frac DA+C_1\biggr)
Equivalently, for $\varepsilon>0$ small enough and replacing the expression of $A$, it follows
\begin{equation}\label{ineq_HJ}\mathscr{A}_u(z) +a\frac {z^{m-q+2}}{[z^{(p-q)/2}+1]^2}\le  C\frac{|\nabla z|^2}{z},  \quad\mbox{on}\, \{z>0\}, \end{equation}
where $a,C$ are positive constants depending on $p,q,N,m$.
Now, we complete the proof.

\medskip{(i)} If $z>1$, then $z^{(p-q)/2}>1$, hence
$$\frac1{(2z^{(p-q)/2})^2}<\frac 1{[z^{(p-q)/2}+1]^2}.$$
This yields from \eqref{ineq_HJ} that
\begin{equation}\label{ineq_HJ2}
\mathscr{A}_u(z) +\tilde a z^{m-p+2}\le  C\frac{|\nabla z|^2}{z},  \quad\mbox{on }\, \{z>1\}.
\end{equation}
Now, setting $\tilde z=z-1$, \eqref{ineq_HJ2} reduces to
$$
\mathscr{A}_u(\tilde z) +\tilde a (\tilde z+1)^{m-p+2}\le  C\frac{|\nabla \tilde z|^2}{\tilde z+1} \quad\mbox{on }\, \{\tilde z>0\}, 
$$
which leads to
$$\mathscr{A}_u(\tilde z) +\tilde a \tilde z^{m-p+2}\le  C\frac{|\nabla \tilde z|^2}{\tilde z} \quad\mbox{on }\, \{\tilde z>0\}. $$
By Lemma \ref{lem1}, we obtain
$$\tilde z(x)\leq C\left({\rm dist}\left(x,\partial\Omega\right)\right)^{-\frac2{m-p+1}}.$$
Hence \eqref{1-8} follows immediately by replacing $\tilde z=z-1$.

(ii) If $z>\varepsilon^\frac{2}{p-q}$, then $z^\frac{p-q}{2}> \varepsilon$, and hence
$$\frac1{(1+\varepsilon^{-1})^2z^{p-q}}\leq\frac 1{[z^{(p-q)/2}+1]^2}.$$
Therefore from \eqref{ineq_HJ}, we have
\begin{equation}\label{ineq_HJ2i}\mathscr{A}_u(z) +\frac{a}{(1+\varepsilon^{-1})^2} z^{m-p+2}\le  C\frac{|\nabla z|^2}{z},  \quad\mbox{on}\, \{z\geq\varepsilon^\frac{2}{p-q}\}. \end{equation}
Set $\tilde z=z-\varepsilon^\frac{2}{p-q}$ as before to yield
$$\mathscr{A}_u(\tilde z) +\frac{a}{(1+\varepsilon^{-1})^2} \tilde z^{m-p+2}\le  C\frac{|\nabla \tilde z|^2}{\tilde z},  \quad\mbox{on}\, \{\tilde z>0\}.$$ 
Therefore, applying \cite[Lemma 3.1]{BV}(with $\beta=0$), we obtain
$$\tilde z(x)\leq C (1+\varepsilon^{-1})^\frac{2}{m-p+1}\left({\rm dist}\left(x,\partial\Omega\right)\right)^{-\frac2{m-p+1}},$$
i.e.,
$$z(x)\leq \varepsilon^\frac{2}{p-q}+C (1+\varepsilon^{-1})^\frac{2}{m-p+1}\left({\rm dist}\left(x,\partial\Omega\right)\right)^{-\frac2{m-p+1}}.$$
Hence, if $\Omega=\mathbb{R}^N$, the above inequality reduces to
$$z(x)\leq \varepsilon^\frac{2}{p-q} \quad\forall\, \varepsilon>0.$$ Hence, taking $\varepsilon\to 0$ we get  $z=0$, i.e., $u$ is constant. 
\hfill$\square$

\section{Proof of Theorem \ref{th_sm}}

This section is devoted to the solutions of equation \eqref{main_sm}.

{\it {\bf Proof of Theorem \ref{th_sm}}. } Let $u$ be a solution of \eqref{main_sm}. Differently from the Hamilton Jacobi type case, here we need to consider the change of variables $u=v^b$ so that  the inequality in the statement of  Lemma \ref{lemma_Bcal}, when
$$\mathcal Bu=-u^s|\nabla u|^m=-|b|^mv^{bs+m(b-1)}z^{m/2},$$
so that 
$$\langle\nabla\bigl(\mathcal Bu\bigr),\nabla v\rangle=-|b|^m
v^{(b-1)m+bs-1}z^{1+m/2}\bigl[bs+m(b-1)+\frac m2\frac{v}{z^2}\langle\nabla z,\nabla v\rangle\bigr],$$
%$$\begin{aligned}
%\frac{\langle\nabla\bigl(\mathcal Bu\bigr),\nabla v\rangle}{A b|b|^{q-2}}
%\frac{z^{1-q/2}}{v^{(b-1)(q-1)}}
% =&-\frac{|b|^{m-q+2}}{bA}\bigl[bs+(b-1)m\bigr]
% v^{t-1}z^{(m-q+4)/2}
%\\& -\frac m{2b}\frac{|b|^{m-q+2}}{A}v^tz^{(m-q+2)/2}\frac{\langle\nabla z,\nabla v\rangle}z,
%\end{aligned}    $$
gives 
$$\begin{aligned} \frac12\mathscr{A}_v(z)& +\frac {|b|^{2m}}{Nb^{2(q-1)}}\frac1{A^2}v^{2t}z^{m-q+2}
\\&\le 
-\frac{|b|^m}{b|b|^{q-2}}\frac{1}{A}v^t{z^{(m-q)/2}}\biggl(\frac 1N+\frac 12\biggr)\frac DA\langle\nabla z,\nabla v\rangle
\\
& \quad-2\frac{(b-1)|b|^m}{b|b|^{q-2}}\frac{1}Av^{t}z^{(m-q+2)/2}\biggl(\frac 1N+\frac 12\biggr)\frac EA\frac zv
\\&\quad +\frac {bs+m(b-1)}{b|b|^{q-m-2}}\frac{v^{t-1}z^{(m-q+4)/2}}{A}
 +\frac m{2b}\frac{|b|^{m-q+2}}{A}v^tz^{(m-q+2)/2}\frac{\langle\nabla z,\nabla v\rangle}z
\\&\quad+(b-1)\biggl[-\frac 1N\frac{DE}{A^2}+{ \frac12}\frac{(p-q)^2}A\frac{A-1}A+\frac EA\biggr]\frac{\langle\nabla z,\nabla v\rangle}v
\\&\quad-(b-1)\biggl[\frac{b-1}N\frac{E^2}{A^2}-(b-1)\frac{(p-q)^2}A\frac{A-1}A+ \frac EA\biggr]\frac{z^2}{v^2}+C_2\frac{|\nabla z|^2}{z}\quad \mbox{on}\quad \{z>0\},
\end{aligned}$$
where  
\begin{equation}
\label{def_t}t:=(b-1)(m-q+1) +bs
\end{equation}
and $C_2$ is given in \eqref{C_1}.
Now,  proceed with the following estimates by Young inequality and thanks to \eqref{estim_DA_EA}
\begin{equation}\label{nabla/v}\begin{aligned}
\biggl|(b-1)&\biggl[-\frac 1N\frac{DE}{A^2}+{\frac12}\frac{(p-q)^2}A\frac{A-1}A+\frac EA\biggr]\biggl|\cdot\biggl|\frac{\langle\nabla z,\nabla v\rangle}v\biggr|
    \\&\le 
|b-1|\biggl|\frac 1N (p-2)(p-1) +(p-q)^2 +p-1\biggr| \cdot \frac{|\nabla z||\nabla v|}{z}\frac zv\\&\le \varepsilon\frac {z^2}{v^2}+C_\varepsilon\frac{|\nabla z|^2|\nabla v|^2}{z^2} =\varepsilon\frac {z^2}{v^2}+ C_\varepsilon\frac{|\nabla z|^2}z,
\end{aligned}\end{equation}
and
\begin{align*}
\biggl|\frac{1}{b|b|^{q-2-m}A}&\biggl(\frac 1N+\frac 12\biggr)\frac DA\biggr|v^t{z^{(m-q)/2}}\bigl|\langle\nabla z,\nabla v\rangle\bigr|
\\
&\le \frac{1}{|b|^{q-1-m}}\frac 1A\biggl(\frac 1N+\frac 12\biggr)|p-2|v^t{z^{(m-q+2)/2}} \frac{|\nabla v|}{z}\,|\nabla z|
\\
&\le \varepsilon \frac{|b|^{2(m-q+1)} }{A^2}v^{2t}z^{m-q+2} +C_\varepsilon\frac{|\nabla z|^2}{z}, 
\end{align*}
so that the inequality for the operator $\mathscr{A}_v(z)$ becomes
\begin{equation}\label{est_last}
\begin{aligned}
\frac12 \mathscr A_v(z)&+\biggl(\frac1N-\varepsilon\biggr)\frac{|b|^{2(m-q+1)}}{A^2}v^{2t}z^{m-q+2}
+(A_1-\varepsilon)\frac{z^2}{v^2}\\&\quad+
\frac{|b|^{m-q+2}}{bA}\biggl[2(b-1)\biggl(\frac 1N+\frac 12\biggr)\frac EA-bs-m(b-1)\biggr]v^{t-1}z^{(m-q+4)/2}
%\\&\quad \frac{|b|^{m+q-2}}{bA}\biggl[\frac 2N\frac EA+q-1+(p-q)\mathfrak A\biggr]v^{t-1}z^{(m-q+4)/2}
\\&\le\frac m2\frac{|b|^{m-q+2}}{bA}v^{t}z^{(m-q+2)/2}\frac{\langle\nabla z,\nabla v\rangle}z
+\bigl(2 C_\varepsilon+C_2\bigr)
 \frac{|\nabla z|^2}z, \end{aligned}
\end{equation}
where
\begin{equation}\label{A_1}A_1=(b-1)\biggl[\frac{b-1}N\frac{E^2}{A^2}-(b-1)\frac{(p-q)^2}A\mathfrak A +\frac EA\biggr], \qquad  \mathfrak A:=\frac{A-1}A.\end{equation}
Finally, estimating as follows 
$$\frac m2\frac{|b|^{m-q+1}}{A}v^{t}z^{(m-q+2)/2}\biggl|\frac{\langle\nabla z,\nabla v\rangle}z\biggr|\le \varepsilon \frac{|b|^{2(m-q+1)} }{A^2} v^{2t}z^{m-q+2} +C_\varepsilon\frac{|\nabla z|^2}{z}$$
we have
\begin{equation}
\label{mathscrA_A1A3}
\frac12 \mathscr A_v(z)+\biggl(\frac1N-2\varepsilon\biggr)\frac{|b|^{2(m-q+1)}}{A^2}v^{2t}z^{m-q+2}
+(A_1-\varepsilon)\frac{z^2}{v^2}
+A_2v^{t-1}z^{(m-q+4)/2}\le C\frac{|\nabla z|^2}z,
\end{equation}
for some positive constant $C$ and with 
\iffalse
{\cb  we could remove the following $$\begin{aligned}\frac 2N\frac{(b-1)E}{A} + \frac{(b-1)E}{A}&- bs-m(b-1)
=\frac 2N\frac{(b-1)E}{A}
-t+\frac{b-1}A[E-(q-1)A]
\\&=\frac 2N\frac{(b-1)E}{A}
-t+\frac{b-1}A[(p-1)A-p+q-(q-1)A]
\\&=\frac 2N\frac{(b-1)E}{A}
-t+\frac{b-1}A(p-q)(A-1)\\&=
(b-1)\frac 2N \frac EA-t+ (b-1)(p-q)\mathfrak A
\end{aligned}
$$}
\fi
$$A_2=\frac{|b|^{m-q+2}}{bA}\biggl [(b-1)\frac 2N \frac EA-t+ (b-1)(p-q)\mathfrak A \biggr],$$
where we have used that 
$$\frac{(b-1)E}{A}- bs-m(b-1)=\frac{b-1}A[(p-1)A-p+q-(q-1)A]-t=\frac{b-1}A(p-q)(A-1)-t.$$
Define
$$\mathcal H:=\biggl(\frac1N-2\varepsilon\biggr)\frac{|b|^{2(m-q+1)}}{A^2}v^{2t}z^{m-q+2}
+(A_1-\varepsilon)\frac{z^2}{v^2}
 +A_2v^{t-1}z^{(m-q+4)/2}$$
Then \eqref{mathscrA_A1A3} reduces to 
\begin{equation}\label{mathscrA_A1A3-1}
\frac12 \mathscr A_v(z)+\mathcal{H}\le C\frac{|\nabla z|^2}z,
\end{equation}
 
Now we set
 \begin{equation}\label{zeta}\zeta=v^{t+1}z^{\frac{m-q}2}.\end{equation}
This in turn implies
$$\mathcal H= \bigg(\varepsilon\frac{|b|^{2(m-q+1)}}{A^2}\zeta^2 + \mathcal T_\varepsilon(\zeta)\bigg)\frac{z^2}{v^2},$$
where
\begin{equation}\label{trin}
\mathcal T_\varepsilon(\zeta)= 
\frac{|b|^{2(m-q+1)}}{A^2}\biggl(\frac 1N- 3\varepsilon\biggr)\zeta^2+
A_2\zeta+\bigl(A_1-\varepsilon\bigr).
\end{equation}
The  discriminant of the trinominal $\mathcal T_\varepsilon(.)$ is given by 
\begin{equation}\label{D}
\mathcal D:=\frac{|b|^{2(m-q+1)}}{A^2}
\biggl\{\biggl[(b-1)\frac 2N \frac EA-t+(b-1)(p-q)\mathfrak A\biggr]^2- 4\Bigl(\frac{1}{N}-3\varepsilon\Bigr) (A_1-\varepsilon)\biggr\}.
\end{equation}
In the following we will show that we can choose $b$ suitably so that for some constant $\kappa, \varepsilon>0$, we will have
\begin{equation}\label{discri}
\mathcal D\leq -\kappa \frac{|b|^{2(m-q+1)}}{A^2}.
\end{equation}
This actually would show that $\mathcal D$ is strictly negative
and
$$\mathcal T_\varepsilon(\zeta)\geq \min_{\mathbb{R}^N}\mathcal T_\varepsilon= \frac{-\mathcal{D} A^2}{4 |b|^{2(m-q+1)}\big(\frac{1}{N}-3\varepsilon\big)} \geq\frac{N\kappa}{4(1-3N\varepsilon)}.
$$
Assuming the choice of $b$ satisfying \eqref{discri}, we first complete the proof. From the above estimate, we see 
\begin{equation}\label{Sep11-2}
\mathcal{H} \frac{v^2}{z^2}\ge \varepsilon \frac{|b|^{2(m-q+1)}}{A^2}\zeta^2 + \frac{N\kappa}{4(1-3N\varepsilon)}
\geq \upkappa_1 + \upkappa_2  \frac{1}{A^2}\max\{\zeta^2, 1\}\geq \upkappa_1 + \upkappa_2  \frac{1}{A^2}\zeta^{\theta} ,
\end{equation}
for some  positive constant $\upkappa_1, \upkappa_2$ depending on $\varepsilon, \kappa$ and $b$. Last inequality holds for any $\theta\in[0,2]$.
We choose $b$ in \eqref{def_t} such that $t>0$.

\iffalse
 {\color{cyan} Take $\upkappa_1=\frac{N\kappa}{8(1-3N\varepsilon)}$ and $\upkappa_2=\min\{\upkappa_1, \varepsilon |b|^{2(m-q+1)}\}$. Since $A\geq 1$, for $\zeta\leq 1$ we have
$$\varepsilon \frac{|b|^{2(m-q+1)}}{A^2}\zeta^2 + \frac{N\kappa}{4(1-3N\varepsilon)}\geq \frac{N\kappa}{4(1-3N\varepsilon)}=2\upkappa_1\ge \upkappa_1 +\upkappa_2
\geq \upkappa_1 + \frac{\upkappa_2}{A^2}\geq \upkappa_1 + \frac{\upkappa_2}{A^2}\zeta^\theta.$$
Similarly, $\zeta>1$ also holds.
}{\cgr OK thanks}
\fi

As in the proof of Theorem~\ref{th_HJ}, we will consider $\{z\geq 1\}$. Now if $b\in (0, 1]$   being  $t>0$ by \eqref{b>0}, in particular
$$\frac{m-q+1}{m-q+1+s}<b\leq 1,$$
we take $\theta:=\frac{2}{t+1}\in(0,2)$.
On the set $\{z\geq 1\}$, substituting the definition of $\zeta$ from \eqref{zeta}, we estimate

\begin{align}\label{Sep7-1}
\mathcal{H} &\geq \upkappa_1 \frac{z^2}{v^2} \chi_{\{v\leq 1\}} +   \frac{\upkappa_2}{A^2}\zeta^\theta \frac{z^2}{v^2} \chi_{\{v\geq 1\}}\nonumber
\\
&\geq \upkappa_1 z^2 \chi_{\{v\leq 1\}} +   \bigg(\frac{\upkappa_2 (v^{t+1}z^{\frac{m-q}{2}})^{\frac{2}{t+1}}}{z^{p-q}(1+b^{p-q}v^{(b-1)(p-q)})^2}\bigg)
 \frac{z^2}{v^2} \chi_{\{v\geq 1\}}\nonumber
\\
&\ge \upkappa_1 z^2 \chi_{\{v\leq 1\}} +  \frac{\upkappa_2}{(1+b^{p-q})^2}z^{\frac{m-q}{t+1}+2+q-p} \chi_{\{v\geq 1\}}\nonumber
\\
&\ge \kappa_b\, z^{1+ \min\{1, \upbeta_1\}} \quad\text{on}\quad \{z\geq 1\},
\end{align}
where
$\kappa_b>0$ is a constant and
\begin{equation}\label{beta1}
\upbeta_1=(\frac{m-q}{t+1}+1+q-p)=\frac{b(m+s-q+1)}{b(m+s-q+1)+q-m}- (p-q).
\end{equation}

\iffalse
({\cb This calls for $\frac{b(m+s-q+1)}{b(m+s-q+1)+q-m}- (p-q)>0$. Since in some case we do not have the advantage of knowing location of $b$, we do not know if this case can be avoided}).
\fi

Next suppose $b>1$. Note that
\begin{align*}
2t&=2(b-1)(m-q+1)+2bs>2(b-1)(p-s-q) + 2bs
= 2(b-1)(p-q)+2s\\ 
&> 2(b-1)(p-q).
\end{align*}
Here we set $\theta= \frac{2(b-1)(p-q)+2}{t+1}\in (0, 2)$.
Then again on $\{z\geq 1\}$, substituting the value of $\zeta$ from \eqref{zeta}, we estimate
\begin{align}\label{Sep7-2}
\mathcal{H} &\geq \upkappa_1 \frac{z^2}{v^2} \chi_{\{v\leq 1\}} +  \frac{\upkappa_2}{A^2}\zeta^\theta \frac{z^2}{v^2} \chi_{\{v\geq 1\}}\nonumber
\\
&\geq \upkappa_1 z^2 \chi_{\{v\leq 1\}} +  \bigg(\frac{\upkappa_2 (v^{t+1}z^{\frac{m-q}{2}})^{\theta} }{z^{p-q}(1+b^{p-q}v^{(b-1)(p-q)})^2}\bigg)
\frac{z^2}{v^2} \chi_{\{v\geq 1\}}\nonumber
\\
&\geq \upkappa_1 z^2 \chi_{\{v\leq 1\}}  +   \bigg(\frac{\upkappa_2 v^{2(b-1)(p-q)}}{(1+b^{p-q}v^{(b-1)(p-q)})^2}\bigg)z^{\frac{\theta(m-q)}{2} +2+q-p}  \chi_{\{v\geq 1\}}\nonumber
\\
&\geq \upkappa_1 z^2 \chi_{\{v\leq 1\}} +  \upkappa_2
\left[\inf_{\xi\geq 1} \frac{ \xi^2}{(1+b^{p-q}\xi)^2} \right]z^{\frac{\theta(m-q)}{2} +2+q-p}  \chi_{\{v\geq 1\}}\nonumber
\\
&\ge \upkappa_1 z^2 \chi_{\{v\leq 1\}} +  \frac{\upkappa_2 }{(1+b^{p-q})^2}z^{\frac{\theta(m-q)}{2} +2+q-p}  \chi_{\{v\geq 1\}}\nonumber
\\
&\ge \kappa_b\, z^{1+ \min\{1,\upbeta_2\}} \quad\text{on} \quad \{z\geq 1\},
\end{align}
where 
\begin{align}\label{beta2}
\upbeta_2&= \frac{(b-1)(p-q)+1}{t+1}(m-q) + 1-(p-q)\nonumber
\\
&= \frac{(b-1)(p-q)(m-q)}{b(m+s-q+1)+q-m}+\frac{b(m+s-q+1)}{b(m+s-q+1)+q-m}- (p-q)\nonumber
\\
&=\frac{(b-1)(p-q)(m-q)}{b(m+s-q+1)+q-m}+\upbeta_1.
\end{align}
 We observe that 
$$\lim_{b\to\infty}\upbeta_2=\frac{(p-q)(m-q)}{m+s-q+1}+1- (p-q)=1-\frac{(p-q)(1+s)}{m+s-q+1}.$$

\iffalse
{\cb Note that we need $\upbeta_2>0$ in \eqref{Sep7-2}.Therefore, when we have the advantage of choosing $b$ large, we will have to impose the condition $1-\frac{(p-q)(1+s)}{m+s-q+1}>0$. }
\fi

Once we prove 
\begin{equation}\label{gamma}
\gamma:=\min\{1,\chi_{(0, 1]}(b) \upbeta_1 \} + \min\{1, \chi_{(1, \infty)}(b)\upbeta_2 \}
\end{equation}
is positive,
inserting \eqref{Sep7-1} and \eqref{Sep7-2} into \eqref{mathscrA_A1A3-1} will lead to 
\begin{equation}\label{Sep11-1}
\frac12 \mathscr A_v(z)+\kappa z^{1+\gamma}\le C\frac{|\nabla z|^2}z\quad\text{on} \quad \{z\geq 1\}.
\end{equation}
Hence by Lemma~\ref{lem1}  and employing an argument similar to Theorem~\ref{th_HJ} we obtain
\begin{equation}\label{z-1}z(x)\leq \left(1+C\left({\rm dist}\left(x,\partial\Omega\right)\right)^{-\frac{2}{\gamma}}\right).\end{equation}

\smallskip

 Now it remains to prove \eqref{discri} holds and $\gamma$ is positive. To this aim, 
from \eqref{D}, we first write
$$
\mathcal{D}= \frac{|b|^{2(m-q+1)}}{A^2}\mathcal{L},
$$
where 
$$\mathcal{L}:=
\biggl\{\biggl[(b-1)\frac 2N \frac EA-t+(b-1)(p-q)\mathfrak A\biggr]^2-4\Bigl(\frac{1}{N}-3\varepsilon\Bigr) (A_1-\varepsilon),$$

\iffalse
where RHS is the limit of $\mathcal{D}$ as $\varepsilon\to 0$.
Next we concentrate on the term inside the bracket, i.e., 
\begin{equation}\label{first<}
\mathcal L:=\biggl[(b-1)\frac 2N \frac EA-t+(b-1)(p-q)\mathfrak A\biggr]^2-\frac4NA_1.
\end{equation}
\fi

From the definition of $t$ in \eqref{def_t} it follows that
\begin{equation}\label{Q}
b-1=\frac{t-s}{m+s-q+1}:=\frac{t-s}{\mathcal Q}
\end{equation}
Therefore, 
$$\mathcal L=\biggl[t\biggl(\frac \Gamma{\mathcal Q}-1\biggr)-s\frac \Gamma{\mathcal Q}\biggr]^2-4\Bigl(\frac{1}{N}-3\varepsilon\Bigr) (A_1-\varepsilon),\quad\text{where} \quad \Gamma:=\frac{2}{N}\frac{E}{A}+(p-q)\mathfrak A, $$
and from \eqref{A_1}, it follows
$$A_1=\frac{t^2}{\mathcal Q^2}\Xi +\frac t{\mathcal Q}\biggl(-2\frac s{\mathcal Q}\Xi+\frac EA\biggr)+
\frac s{\mathcal Q}\biggl(\frac s{\mathcal Q}\Xi-\frac EA\biggr),\quad\text{where}\quad\Xi:=\frac{1}{N}\frac{E^2}{A^2}-\frac 1A(p-q)^2\mathfrak A.$$
Consequently
$$\mathcal L=L_1t^2+L_2t+L_3,$$
with
$$\begin{aligned}
L_1&:=1-\frac2{\mathcal Q}\Gamma+\frac{\Upsilon}{
 \mathcal{Q}^2}
+\varepsilon\frac{12E^2}{NA^2\mathcal{Q}^2}
-\varepsilon\frac{12}{A\mathcal{Q}^2}(p-q)^2\mathfrak{A}
\\
L_2&:=\frac{2s}{\mathcal Q}\Gamma-\frac{2s}{\mathcal Q^2}\Upsilon-
\frac{4}{N}\frac{E}{A}\frac1{\mathcal Q}+\varepsilon\frac{12}{\mathcal Q}\bigg(\frac{E}{A}-\frac{2s}{\mathcal Q}\frac{E^2}{NA^2}+\frac{2s}{{\mathcal Q}A}(p-q)^2\mathfrak{A}\bigg)
\\
L_3&:=\frac{s^2}{\mathcal Q^2}\Upsilon+\frac{4}{N}\frac{E}{A}\frac s{\mathcal Q}+4\varepsilon\bigg(\frac{3s}{\mathcal{Q}}\bigl(\frac s{\mathcal Q}\Xi-\frac EA\bigr)+(\frac{1}{N}-3\varepsilon) \bigg),\end{aligned}$$
where
$$ \Upsilon:=(p-q)^2\mathfrak A^2+\frac4N(p-1)(p-q)\mathfrak A.$$
In particular, it holds
\begin{equation}\label{estimate_A_Y}
\begin{gathered}
0\le \mathfrak A\le 1, \qquad 0\le \Upsilon\le (p-q)\biggl[p-q+\frac 4N (p-1)\biggr] \\
\frac2N (q-1)\le \Gamma\le \frac2N (p-1) +p-q,\qquad
\Xi\leq \frac{(p-1)^2}{N}.
\end{gathered}
\end{equation}
It is important to note that the coefficients of the polynomial $\mathcal{L}$, may depend on $v, z$ due to the involvement of $E, A, \mathfrak{A}$. We would like to define a polynomial $\tilde{\mathcal{L}}$ with deterministic coefficients
that dominates $\mathcal{L}$ in $[0, \infty)$. To do so,  we let
\begin{align*}
\tilde{L}_1&:=1-\frac{4(q-1)}{N\mathcal Q}+ \frac{(p-q)}{\mathcal{Q}^2}\biggl[p-q+\frac 4N (p-1)\biggr]
+\varepsilon\frac{12(p-1)^2}{N \mathcal{Q}^2},
\\
\tilde{L}_2&:=\frac{2s}{\mathcal Q}\left(\frac2N (p-1) +p-q\right)-
\frac{4(q-1)}{N\mathcal Q}+\varepsilon\frac{12}{\mathcal Q}\bigg(p-1-\frac{2s}{\mathcal Q}\frac{(q-1)^2}{N}+\frac{2s}{{\mathcal Q}}(p-q)^2\bigg),
\\
\tilde{L}_3&:=\frac{s^2}{\mathcal Q^2}(p-q)\biggl[p-q+\frac 4N (p-1)\biggr]+\frac{4(p-1)}{N}\frac s{\mathcal Q}+4\varepsilon\bigg(\frac{3s}{\mathcal{Q}}\bigl(\frac s{\mathcal Q}
\frac{(p-1)^2}{N}-(q-1)\bigr)+(\frac{1}{N}-3\varepsilon) \bigg).
\end{align*}
Clearly, for $\tilde{\mathcal L}(t):=\tilde{L}_1t^2+\tilde{L}_2t+\tilde{L}_3$,  by \eqref{estim_DA_EA} and \eqref{estimate_A_Y}, we have ${\mathcal L}(t)\leq \tilde{\mathcal L}(t)$
for $t\in [0, \infty)$, uniformly in $v$ and $z$. Furthermore, the choice of $t$ also determines the choice of $b$. Therefore, to establish  \eqref{discri} it is enough to find $t, \kappa, \varepsilon>0$, under the stated conditions of Theorem~\ref{th_sm},  satisfying
\begin{equation}\label{moddiscri}
\tilde{\mathcal L}(t)\leq -\kappa.
\end{equation}
Because of continuity, it is enough to establish \eqref{moddiscri}
with $\varepsilon=0$.  In this case, coefficients of $\tilde{\mathcal L}(t)$ simplify as follows

\begin{equation}\label{tildeL1}\begin{aligned}
\tilde{L}_1&:=1-\frac{4(q-1)}{N\mathcal Q}+ \frac{(p-q)}{\mathcal{Q}^2}\biggl[p-q+\frac 4N (p-1)\biggr]=\frac{1}{\mathcal{Q}^2}(\mathcal Q-\mathcal Q_1)(\mathcal Q-\mathcal Q_2),\\
\tilde{L}_2&:=\frac{2s}{\mathcal Q}\left(\frac2N (p-1) +p-q\right)-
\frac{4(q-1)}{N\mathcal Q}\\
\tilde{L}_3&:=\frac{s^2}{\mathcal Q^2}(p-q)\biggl[p-q+\frac 4N (p-1)\biggr]+\frac{4(p-1)}{N}\frac s{\mathcal Q}=\frac{s^2}{\mathcal Q^2}\mathcal{R}+\frac{4(p-1)}{N}\frac s{\mathcal Q}.
\end{aligned}\end{equation}
where $\mathcal{Q}_1$ and $\mathcal{Q}_2$ are defined in  \eqref{defQ2} and \eqref{defQ1} respectively.

\iffalse

\begin{equation*}
\begin{aligned}\tilde{L}_1&=\frac{\mathcal R}{\mathcal Q}\bigl(\mathcal{Q}-\mathcal Q_1\bigr),\nonumber\\
\tilde{L}_2^2&=\frac {4s}{\mathcal Q^2}\biggl[\frac{s(\mathcal R-1)}{\mathcal{Q}_1}+\frac 4N(p-1)\biggr]\bigl(\mathcal Q_2-\mathcal Q_1\bigr)\nonumber\\
\tilde{L}_3&=\frac s{\mathcal Q}\biggl[\frac{s(\mathcal R-1)}{\mathcal{Q}}+\frac 4N(p-1)\biggr],\end{aligned}\end{equation*}
\fi

\smallskip

{\bf Case 1:} 
 $\mathcal{Q}_1<\mathcal{Q}<\mathcal{Q}_2$

This immediately implies $\tilde L_1< 0$. Therefore, we can choose 
$t$ large enough so that \eqref{moddiscri} holds with $\kappa=1$. 
Moreover, since $t\to\infty$ implies $b\to\infty$  and 
$\lim_{b\to\infty}\upbeta_2=1-\frac{(p-q)(1+s)}{\mathcal Q}>0$ , by the given hypothesis.  
 Hence  for $t$  large enough it holds $\upbeta_2>0$. This proves $\gamma>0$ in \eqref{gamma}.

\smallskip

{\bf Case 2: } 
 $\mathcal Q\subseteq\{\mathcal Q_1,\mathcal Q_2\} \quad s<\dfrac{q-1}{p-1+\frac N2(p-q)}.$

Therefore, in this case we have $\tilde L_1=0$ and 
$\tilde{L}_2<0$. Which in turn implies  $\tilde{\mathcal L}(t)\to-\infty$ as $t\to \infty$, we can argue as before to find $b$ (equivalently, $t$) satisfying \eqref{b>0},  \eqref{moddiscri} with $\kappa=1$ and $\upbeta_2>0$. Hence,  $\gamma>0$ in \eqref{gamma}.

\smallskip

{\bf Case 3:}  

\begin{equation}\label{Sep8-3}
\{\mathcal Q<\mathcal Q_1\}\cup\{\mathcal Q>\mathcal Q_2\}, \quad  s<\dfrac{q-1}{p-1+\frac N2(p-q)}, \quad\mbox{and}
\quad m\leq q<p<m+1.
\end{equation}

As $\mathcal Q_1<\mathcal Q_2$, clearly in this case we have 
$\tilde L_1>0$,  and therefore, $\tilde{\mathcal L}$ forms a strictly convex function that attends minimum at the point 
$$t^*=-\frac{\tilde{L}_2}{2\tilde{L}_1}.$$
Further, \eqref{Sep8-3} also implies $\tilde{L}_2<0$. In particular, we have
\begin{equation}\label{Sep8-4}
t^*>0\implies b^*=\frac{t^*+m-q+1}{\mathcal Q}>\frac{m-q+1}{\mathcal Q}>0.
\end{equation}

Hence \eqref{b>0} holds. To establish \eqref{moddiscri} we set $\kappa:=-\tilde{\mathcal L}(t^*)$. So, to show \eqref{moddiscri} holds,  we need to prove that $\tilde{\mathcal L}(t^*)<0$. 
Since  
$\tilde{\mathcal L}(t^*)=\frac{4\tilde L_3\tilde L_1-\tilde L_2^2}{4\tilde L_1}$ and $\tilde L_1>0$, it is enough to verify that $4\tilde{L}_1\tilde{L}_3<\tilde{L}_2^2$.
 From \eqref{tildeL1} we see that 
$$\tilde L_2^2=\frac{4}{\mathcal Q^2}\biggl[\frac2N(q-1)-s\left(\frac2N (p-1) +p-q\right)\biggr]^2$$
giving us
$$\frac{\tilde{L}_2^2}{4\tilde{L}_3}=\frac{N}{s}\frac{\bigl[\frac2N(q-1)-s\left(\frac2N (p-1) +p-q\right)\bigr]^2}{Ns\mathcal R+4(p-1)\mathcal Q}. $$

Now, if $\mathcal{Q}<\mathcal{Q}_1$,  then we have
$$\frac{\tilde{L}_2^2}{4\tilde{L}_3}\geq \frac{N}{s}\frac{\bigl[\frac2N(q-1)-s\left(\frac2N (p-1) +p-q\right)\bigr]^2}{Ns\mathcal R+4(p-1)\mathcal Q_1}:=a.$$
Now 
$a> \tilde L_1,$ if and only if
\begin{equation}\label{Sep14-1}
(1-a)\mathcal Q^2-\frac{4(q-1)}{N}\mathcal Q+\mathcal R< 0.
\end{equation}

Now if $a\leq 1$ then $$\mathcal Q<\mathcal Q_1\implies (1-a)\mathcal Q^2-\frac{4(q-1)}{N}\mathcal Q+\mathcal R\leq (1-a)\mathcal Q^2_1+\mathcal R- \frac{4(q-1)}{N}\mathcal Q< 0$$
provided $Q>\frac{N\big((1-a)\mathcal Q_1^2+\mathcal R\big)}{4(q-1)}$. 

If $a>1$ then \eqref{Sep14-1} is equivalent to 
$$(a-1)\mathcal Q^2+\frac{4(q-1)}{N}\mathcal Q-\mathcal R> 0.$$
Now if $\mathcal Q>\frac{\mathcal R N}{4(q-1)}$ then
$$ (a-1)\mathcal Q^2+\frac{4(q-1)}{N}\mathcal Q-\mathcal R\geq \frac{4(q-1)}{N}\mathcal Q-\mathcal R> 0.$$
Therefore, 
$$\mathcal{Q}\in \begin{cases}\bigg(\frac{N\big((1-a)\mathcal Q_1^2+\mathcal R\big)}{4(q-1)}, \, \mathcal Q_1\bigg) \quad\mbox{if}\quad a\leq 1\\
\bigg(\frac{\mathcal R N}{4(q-1)},\, \mathcal Q_1\bigg) \quad\mbox{if}\quad a>1
\end{cases}
$$
yields \eqref{moddiscri}.

Now suppose $\mathcal Q_2<\mathcal Q<\mathcal Q_3,$ where
$$\mathcal Q_3:=\mathcal Q_2+\frac{[
\frac2N(q-1)-s\left(\frac2N (p-1) +p-q\right)]^2}{s\bigl[\frac{s\mathcal R}{\mathcal Q_2}+\frac4N(p-1)\bigr]}.$$
In this case we have
$$\begin{aligned}\frac{\tilde{L}_2^2}{4\tilde{L}_3}&=\frac{\bigg[\frac{2}{N}\big(q-1-s(p-1)\big)-s(p-q)\bigg]^2}{s\mathcal Q\bigl[\frac{s\mathcal R}{\mathcal Q}+\frac4N(p-1)\bigr]}
\\&\ge \frac{\bigg[\frac{2}{N}\big(q-1-s(p-1)\big)-s(p-q)\bigg]^2}{s\mathcal Q\bigl[\frac{s\mathcal R}{\mathcal Q_2}+\frac4N(p-1)\bigr]}
\\&=\frac{\mathcal Q_3-\mathcal Q_2}{\mathcal Q}>
\frac{\mathcal Q-\mathcal Q_2}{\mathcal Q}\ge
\frac{\mathcal Q-\mathcal Q_2}{\mathcal Q}\cdot\frac{\mathcal Q-\mathcal Q_1}{\mathcal Q}=\tilde{L}_1
\end{aligned}$$
where in the last inequality we have used $(\mathcal Q-\mathcal Q_1)/\mathcal Q\le 1$ being $\mathcal Q_2>\mathcal Q_1$. 
Hence, \eqref{moddiscri} is satisfied in this subcase too. 

Finally, to conclude the proof  we are now only left  to show that 
$\gamma$ in \eqref{gamma} is positive. 
Towards this goal, we recall that $t^*>0$ implies
 $b^*>\frac{m-q+1}{\mathcal Q}$.
Now we show that $\upbeta_1>0$. Indeed, from \eqref{beta1}, $\upbeta_1(b)=\frac{b\mathcal Q}{b\mathcal{Q}+q-m}+q-p$
is increasing in $b$ (since $m\leq q$). Therefore, 
\begin{equation}\label{Sep8-1}
\upbeta_1(b^*)\geq \upbeta_1\Bigl(\frac{m-q+1}{\mathcal Q}\Bigr)
=m-q+1+q-p=m+1-p>0,
\end{equation}
where the last inequality follows by the hypothesis of Case 3.

 Next we show that $\upbeta_2>0$ whenever $b^*>1$.
We suppose $b^*>1$. We recall from \eqref{beta2} and \eqref{beta1} that 
\begin{align*}
\beta_2&=\frac{(b-1)(p-q)(m-q)}{b\mathcal{Q}+q-m}+\beta_1\\
&=\frac{(b-1)(p-q)(m-q)}{b\mathcal{Q}+q-m}+\frac{b\mathcal{Q}}{b\mathcal{Q}+q-m}-(p-q).
\end{align*}

Further,
$$\frac{(b-1)(p-q)(m-q)}{b\mathcal{Q}+q-m} + \frac{b\mathcal{Q}}{b\mathcal{Q}+q-m}
= (p-q)\left[\frac{\frac{q-m}{b}+m-q+\frac{\mathcal{Q}}{p-q}}{\frac{q-m}{b}+\mathcal{Q}}\right]:=\xi(b),$$
i.e., $\upbeta_2(b)=\xi(b)-(p-q)$.
It is easy to see that  $t\mapsto \xi(t)$ is a decreasing function for
$m-q+\frac{\mathcal{Q}}{p-q}-\mathcal{Q}\leq 0$ and an increasing
function for $m-q+\frac{\mathcal{Q}}{p-q}-\mathcal{Q}> 0$. Therefore,
if $m-q+\frac{\mathcal{Q}}{p-q}-\mathcal{Q}\leq 0$, we have
$$
\upbeta_2(b^*)\geq \lim_{b\to\infty}\upbeta_2(b)
=1-\frac{(p-q)(1+s)}{m+s-q+1}>0 \quad\mbox{(by hypothesis of the theorem).}
$$
On the other hand, if $m-q+\frac{\mathcal{Q}}{p-q}-\mathcal{Q}> 0$, we have
$$\upbeta_2(b^*)\geq \upbeta_2(1)=\upbeta_1(1)\geq \upbeta_1\Bigl(\frac{m-q+1}{\mathcal Q}\Bigr),$$
where in the last inequality we have used the hypothesis that $s> 0$ implying $\mathcal{Q}>m-q+1$.  Combining the above inequality with \eqref{Sep8-1} we obtain $\upbeta_2(b^*)>0$. Hence, we have proved 
$\upbeta_2(b^*)>0$ if $b^*>1$.

Hence, combining the above with \eqref{Sep8-1} we have shown  
$\gamma>0$ and this completes the proof of \eqref{1-8}. Hence 1st part of the theorem is proved.

(ii) As in the proof of Theorem~\ref{th_HJ}(ii), here also we consider the set $\{z>\varepsilon^\frac{2}{p-q}\}$ while estimating \eqref{Sep11-2}. Doing the same analysis as before (see \eqref{Sep7-1}) will lead  us to 
\begin{equation}\label{Sep11-3}
    \mathcal{H}\geq \kappa_b z ^{1+\min\{1,\upbeta_1\}} \quad\text{on}\quad \{z>\varepsilon^\frac{2}{p-q}\}, 
\end{equation}
with 
$\kappa_b=\min\{\kappa_1, \frac{\kappa_2}{(\varepsilon^{-1}+b^{p-q})^2}\}$ when $b\in(0,1]$. On the other hand when $b>1$, (see \eqref{Sep7-2}) will lead us to
\begin{equation}\label{Sep11-4}
    \mathcal{H}\geq \kappa_b z ^{1+\min\{1,\upbeta_2\}} \quad\text{on}\quad \{z>\varepsilon^\frac{2}{p-q}\}, 
\end{equation}
with 
$\kappa_b=\min\{\kappa_1, \frac{\kappa_2}{(\varepsilon^{-1}+b^{p-q})^2}\}$ .
Hence, setting $\tilde z:=z-\varepsilon^\frac{2}{p-q}$, \eqref{Sep11-1} will be replaced by
$$
\frac12 \mathscr A_v(\tilde z)+\kappa_b \tilde z^{1+\gamma}\le C\frac{|\nabla \tilde z|^2}{\tilde z}\quad\text{on} \quad \{z\geq \varepsilon^\frac{2}{p-q}\}.
$$
Hence by Lemma~\ref{lem1}  it follows
$$z(x)\leq \left(\varepsilon^\frac{2}{p-q}+C\left({\rm dist}\left(x,\partial\Omega\right)\right)^{-\frac{2}{\gamma}}\right),$$
where $C>0$ depends on $\kappa_b$.
Hence, if $\Omega=\mathbb{R}^N$, the above inequality reduces to
$$z(x)\leq \varepsilon^\frac{2}{p-q} \quad\forall\, \varepsilon>0.$$ Taking $\varepsilon\to 0$ we get  $z=0$, i.e., $u$ is constant. 
\hfill{$\square$}

\section{Product nonlinearity with $m>q$ and proof of Theorem~\ref{AB001}}

\iffalse
\subsection{Properties of positive solution to \eqref{main_sm} when $m>q$}
{\cb
\begin{proposition}
    If $u$ is any positive solution to \eqref{main_sm} with $m>q$. Then $u^{s+1}|\nabla u|^{m-q}\in L^\infty(\mathbb{R}^N)$.
\end{proposition}

 \begin{proof}
 Let $u$ be a solution of \eqref{main_sm}.  As in the case of Lemma \ref{lemma_Bcal}, we consider $u=v^b$. Therefore, 
$$\mathcal Bu=-u^s|\nabla u|^m=-|b|^mv^{bs+m(b-1)}z^{m/2},$$
which leads
$$\langle\nabla\bigl(\mathcal Bu\bigr),\nabla v\rangle=-|b|^m
v^{(b-1)m+bs-1}z^{1+m/2}\bigl[bs+m(b-1)+\frac m2\frac{v}{z^2}\langle\nabla z,\nabla v\rangle\bigr]$$
and therefore,
$$\bigg|\frac {1}{b|b|^{q-2}}\frac{\langle\nabla\bigl(\mathcal Bu\bigr),\nabla v\rangle}{A}
\frac{1}{v^{(b-1)(q-1)}z^{(q-2)/2}}\bigg|\leq
2\varepsilon\frac{|b|^{2(m-q+1)}}{A^2} v^{2t}z^{m-q+2}+C_\varepsilon\frac{z^2}{v^2}+C(\varepsilon)\frac{|\nabla z|^2}{z}.$$
Substituting the above estimate in the assertion of Lemma~\ref{lemma_Bcal}, and estimating the other terms as in the proof of Theorem~\ref{th_sm} yields us
\begin{equation}
\label{12oct1}
\frac12 \mathscr A_v(z)+\biggl(\frac1N-4\varepsilon\biggr)\frac{|b|^{2(m-q+1)}}{A^2}v^{2t}z^{m-q+2}
\le C\frac{|\nabla z|^2}z+C_1\frac{z^2}{v^2} \quad \mbox{on}\quad \{z>0\}.
\end{equation}

 \end{proof}
}

\subsection{Equation~\eqref{main} with \eqref{f_IL} and $m>q$}
\fi

In this section we prove Theorem~\ref{AB001}. For that,
we first prove that  any  solution (in the sense of Definition~\ref{defi-sol}) is also a viscosity solution at the nondegenerate points.  For details on the definition of viscosity solutions we refer to \cite{CIL}. See also \cite{JLM} for the definition of 
viscosity solution in the context of $p$-Laplacian.
To do this, we introduce the notations
\begin{align*}
F(\xi, M) &= F_p(\xi, M) + F_q(\xi, M),
\\
F_p(\xi, M)&= - |\xi|^{p-2} {\rm tr} M - (p-2) |\xi|^{p-4}\langle\xi M, \xi\rangle,
\\
F_q(\xi, M)&= - |\xi|^{q-2} {\rm tr} M - (q-2) |\xi|^{p-4}\langle\xi M, \xi\rangle,
\end{align*}
where $\xi \in \mathbb R^N$,  $\xi\neq 0$, and $M\in\mathbb{S}_n$, $\mathbb{S}_n$ is the set of all real symmetric $N\times N$ matrices. It is important to note that for any twice differentiable function $u$ we have
$-\Delta_p u(x)=F_p(\nabla u(x), D^2u(x))$ and
$-\Delta_q u(x)=F_q(\nabla u(x), D^2u(x))$, so that $F=\Delta_p+\Delta_q$.

\begin{lemma}\label{L-vis}
Suppose that $w$ is a $C^1$  weak sub-solution in $\Omega$ to $-\mathcal{B}w = f(x, w, \nabla w)$ , where $\mathcal{B}$ is given in Lemma \ref{lemma_Bcal} and $f$ is a continuous function. If for some point $x\in\Omega$, 
there exists a function $\varphi\in C^2(B_r(x))$
, $B_r(x)\Subset \Omega$, such that $\nabla\varphi(x)\neq 0$ and $w(y)-\varphi(y)\leq w(x)-\varphi(x)$ for $y\in B_r(x)$, then we have
$$-\mathcal{B}\varphi\leq f(x, w(x), \nabla \varphi(x)).$$ 
An analogous conclusion holds
for  $C^1$  weak super-solution.
\end{lemma}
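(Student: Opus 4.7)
The plan is to argue by contradiction in the classical weak-to-viscosity style. Assume $w$ is a $C^1$ weak subsolution touched from above at $x_0$ by $\varphi \in C^2(B_r(x_0))$ with $\nabla\varphi(x_0)\neq 0$, but suppose
$$\eta := -\mathcal{B}\varphi(x_0) - f(x_0, w(x_0), \nabla\varphi(x_0)) > 0.$$
Since $w-\varphi$ attains a local maximum at $x_0$ and $w\in C^1$, we have $\nabla w(x_0)=\nabla\varphi(x_0)$. Shrinking $r$ to some $\rho>0$, both $|\nabla w|$ and $|\nabla\varphi|$ stay bounded away from zero on $B_\rho(x_0)$, the $(p,q)$-Laplacian is thus nondegenerate there, and by continuity of $\mathcal{B}\varphi$, $w$, $\nabla w$, and $f$ we can arrange
$$-\mathcal{B}\varphi(y) - f(y, w(y), \nabla w(y)) \geq \eta/2, \quad y \in B_\rho(x_0).$$

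Next I would strictify the contact via the perturbation $\tilde\varphi_\delta(y) := \varphi(y) + \delta|y-x_0|^2$, with $\delta>0$ small, so that $w-\tilde\varphi_\delta$ attains a strict interior maximum at $x_0$ on $\bar B_{\rho/2}(x_0)$. Let $-m := \max_{\partial B_{\rho/2}(x_0)}(w - \tilde\varphi_\delta) < 0$; for any $c \in (-m,0)$ the open set
$$U := \{y \in B_{\rho/2}(x_0) : w(y) - \tilde\varphi_\delta(y) > c\}$$
is a nonempty neighborhood of $x_0$ with $\bar U \Subset B_{\rho/2}(x_0)$. Then $\phi := (w - \tilde\varphi_\delta - c)_+$ is a nonnegative Lipschitz function with support in $\bar U$, admissible as a test function in the weak subsolution inequality (by standard density of $C^1_c$ in $W^{1,\infty}_0$), yielding
$$\int_U \bigl(|\nabla w|^{p-2} + |\nabla w|^{q-2}\bigr)\nabla w \cdot (\nabla w - \nabla\tilde\varphi_\delta)\, dy \leq \int_U f(y, w, \nabla w)\,(w - \tilde\varphi_\delta - c)\, dy.$$

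Since $\tilde\varphi_\delta \in C^2$ with $\nabla\tilde\varphi_\delta \neq 0$ on $U$ (for $\rho,\delta$ small), classical integration by parts combined with $w - \tilde\varphi_\delta - c = 0$ on $\partial U$ gives
$$\int_U \bigl(|\nabla\tilde\varphi_\delta|^{p-2} + |\nabla\tilde\varphi_\delta|^{q-2}\bigr)\nabla\tilde\varphi_\delta \cdot (\nabla w - \nabla\tilde\varphi_\delta)\, dy = -\int_U \mathcal{B}\tilde\varphi_\delta\,(w - \tilde\varphi_\delta - c)\, dy.$$
Subtracting the two displays and invoking the elementary monotonicity $(|a|^{s-2}a - |b|^{s-2}b)\cdot(a-b)\geq 0$ with $s\in\{p,q\}$, $a = \nabla w$, $b = \nabla\tilde\varphi_\delta$, one gets
$$\int_U \bigl[-\mathcal{B}\tilde\varphi_\delta - f(y, w, \nabla w)\bigr]\,(w - \tilde\varphi_\delta - c)\, dy \leq 0.$$
Because $\mathcal{B}\tilde\varphi_\delta \to \mathcal{B}\varphi$ uniformly on $B_\rho(x_0)$ as $\delta\to 0$, shrinking $\delta,\rho,|c|$ keeps the bracket $\geq \eta/4$ on $U$, while $w - \tilde\varphi_\delta - c > 0$ on $U$; this is the desired contradiction. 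The supersolution version follows symmetrically by reversing inequalities and using $\phi := (\tilde\varphi_\delta - w - c)_+$.

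The main obstacle is the simultaneous bookkeeping of the parameters $\delta$, $\rho$, $c$ so as to guarantee (i) the strict interior maximum of $w-\tilde\varphi_\delta$ on $\bar B_{\rho/2}(x_0)$, (ii) nonvanishing of $\nabla\tilde\varphi_\delta$ on $U$ so that the integration by parts and the nondegenerate form of $\mathcal{B}\tilde\varphi_\delta$ make sense, and (iii) persistence of the strict inequality $-\mathcal{B}\tilde\varphi_\delta > f(y,w(y),\nabla w(y))$ on $U$ after perturbation. Each is a routine continuity argument, but they must be ordered so that the final choice of parameters is consistent.
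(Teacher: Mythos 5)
Your argument is correct and is essentially the paper's proof: contradiction, continuity yielding a strict inequality on a small ball where the gradients do not vanish, a perturbed $C^2$ test function, the weak formulation tested against the positive part of the difference, and monotonicity of $t\mapsto|t|^{p-2}t$ and $t\mapsto|t|^{q-2}t$; the paper merely perturbs by subtracting a small bump ($\varphi_{\tilde\theta}=\varphi-\tilde\theta\chi$ plus a constant) instead of your quadratic strictification with a level shift, and derives the contradiction from $w\le\varphi_{\tilde\theta}$ versus $w(x_0)>\varphi_{\tilde\theta}(x_0)$ rather than from the sign of the integral. The only cosmetic fix: your claim $\max_{\partial B_{\rho/2}(x_0)}(w-\tilde\varphi_\delta)<0$ tacitly uses the harmless normalization $\varphi(x_0)=w(x_0)$ (equivalently, choose $c$ strictly between the boundary maximum of $w-\tilde\varphi_\delta$ and its value at $x_0$).
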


\begin{proof}
We prove by contradiction. Suppose that 
\begin{equation}\label{L301}
F(\nabla\varphi(x), D^2\varphi(x))> f(x, w(x), \nabla \varphi(x)).
\end{equation}
Since $\nabla\varphi(x)=\nabla w(x)$,  being $x$ a local minimum for $\varphi-w$, this gives us
$$
F(\nabla\varphi(x), D^2\varphi(x))> f(x, w(x), \nabla w(x)).
$$ 
Therefore, using the continuity of $\varphi$ and $f$, we can find 
$\varepsilon>0$ and $r_1\in (0, r/2]$ such that
$$ 
-\mathcal{B}\varphi(y)=F(\varphi(y), D^2\varphi(y))
\geq f(y, w(y), \nabla w(y)) + \varepsilon, \quad |\nabla\varphi(y)|>0 \quad \text{in}\; \bar{B}_{r_1}(x).
$$
Let $\chi$ be a non-negative smooth function supported in $B_{r_1}(x)$ and $\chi(x)= 1$, where $x$ is given as above. Define $\varphi_\theta(y)=\varphi(y)-\theta\chi(y) + w(x)-\varphi(x)$ for $\theta\in (0, 1)$. Using continuity we can
find $\tilde\theta\in (0, 1)$ such that 
$$
-\mathcal{B}\varphi_{\tilde\theta}(y)\geq f(y, w(y), \nabla w(y)) +\varepsilon/2 \quad \text{in}\; \bar{B}_{r_1}(x).$$
Thus, we obtain
\begin{equation}\label{ineq_eps/2}-\mathcal{B} w + \mathcal{B}\varphi_{\tilde\theta}(y)\leq -\varepsilon/2\quad \text{in}\; \bar{B}_{r_1}(x),\end{equation}
in the weak sense. Consider the test function $v=(w-\varphi_{\tilde\theta})_+$. Since $w\leq \varphi+ w(x)-\varphi(x)= \varphi_{\tilde\theta}$ on $\partial B_{r_1}(x)$, we have 
$v\in W^{1,p}_0(B_{r_1}(x))$. Now multiply \eqref{ineq_eps/2} by $v$ and perform an integration by parts to arrive at
\begin{align*}
-\frac{\varepsilon}{2}\int_{B_{r_1}(x)} v(y) \dy
&\geq \int_{B_{r_1}(x)} (|\nabla w|^{p-2}\nabla w(y)-|\nabla \varphi_{\tilde\theta}|^{p-2}\nabla\varphi_{\tilde\theta}(y))\cdot \nabla v(y) \dy 
\\
&\qquad + \int_{B_{r_1}(x)} (|\nabla w|^{q-2}\nabla w(y)-|\nabla \varphi_{\tilde\theta}|^{q-2}\nabla\varphi_{\tilde\theta}(y))\cdot \nabla v(y) \dy 
\\
&=  \int_{B_{r_1}(x)\cap \{v>0\}} (|\nabla w|^{p-2}\nabla w(y)-|\nabla \varphi_{\tilde\theta}|^{p-2}\nabla\varphi_{\tilde\theta}(y))\cdot (\nabla w(y)-\nabla \varphi_{\tilde\theta}(y)) \dy 
\\
&\qquad+ \int_{B_{r_1}(x)\cap \{v>0\}} (|\nabla w|^{q-2}\nabla w(y)-|\nabla \varphi_{\tilde\theta}|^{q-2}\nabla\varphi_{\tilde\theta}(y))\cdot (\nabla w(y)-\nabla \varphi_{\tilde\theta}(y)) \dy
\\
&\geq 0,
\end{align*}
where we  used monotonicity 
of the maps $t\mapsto |t|^{p-2}t, |t|^{q-2}t$. Thus we get
$\int_{B_{r_1}(x)} v(y)=0$, implying $v\equiv 0\Rightarrow w\leq \varphi_{\tilde\theta}$ in $B_{r_1}(x)$. But $w(x)>\varphi_{\tilde\theta}(x)$, which is a contradiction. Hence \eqref{L301} can not hold, completing 
the proof.
\end{proof}
At this point we introduce the notion of subjet $J^{2,+}$ and superjet $J^{2, -}$ which are defined as follows
\begin{align*}
J^{2,+} u(x)=\{(\nabla \varphi(x),D^2\varphi(x)) &:\; \varphi\; \text{is $C^2$ in a neighbourhood of $x$ }
\\
&\quad \text{and $u-\varphi$ has a local maximum at $x$}\},
\end{align*}
and 
\begin{align*}
J^{2,-} u(x)=\{(\nabla \varphi(x),D^2\varphi(x)) &:\; \varphi\; \text{is $C^2$ in a neighbourhood of $x$ }
\\
&\quad \text{and $u-\varphi$ has a local minimum at $x$}\}.
\end{align*}
The closure of these jets are defined as follows: for $x\in\Omega$
\begin{align*}
\bar{J}^{2, +} u(x)=\{(\xi, X)\in \mathbb{R}^n\times\mathbb{S}_n\; &:\; \exists\, (x_n, \xi_n, X_n)\in \Omega\times\mathbb{R}^n\times\mathbb{S}_n 
\; \text{with}\; (\xi_n, X_n)\in J^{2,+}u(x_n)
\\
&\quad \text{and}\; (x_n, u(x_n), \xi_n, X_n)\to (x, u(x), \xi, X)\},
\end{align*}
and
\begin{align*}
\bar{J}^{2, -} u(x)=\{(\xi, X)\in \mathbb{R}^n\times\mathbb{S}_n\; &:\; \exists\, (x_n, \xi_n, X_n)\in \Omega\times\mathbb{R}^n\times\mathbb{S}_n 
\; \text{with}\; (\xi_n, X_n)\in J^{2,-}u(x_n)
\\
&\quad \text{and}\; (x_n, u(x_n), \xi_n, X_n)\to (x, u(x), \xi, X)\}.
\end{align*}
From Lemma~\ref{L-vis}, we know that if $u$ is a solution to $-\mathcal{B} u=f(u, \nabla u)$, then $u$ is a viscosity solution to 
$-\mathcal{B} u=f(u, \nabla u)$ at the points $\nabla u(x)\neq 0$. Again, since $\nabla u(x)=\nabla \varphi(x)$ for any
$(\nabla \varphi(x),D^2\varphi(x))\in J^{2,+}u(x)$, we have $F(\varphi(x), D^2\varphi(x))\leq  f(u(x), \nabla \varphi(x))$ for 
$(\nabla \varphi(x),D^2\varphi(x))\in J^{2,+}u(x)$ and $\nabla u(x)\neq 0$. Using the continuity of $F$, it is easily seen that
\begin{equation}\label{test1}
F(\xi, X)\leq  f(u(x), \xi)\quad \text{for}\;\; (\xi,X)\in \bar{J}^{2,+}u(x)\quad \text{and}\quad \nabla u(x)\neq 0.
\end{equation}
Considering now $u$ as a weak supersolution, an analogous conclusion also holds for
$\bar{J}^{2,-}$, that is,
\begin{equation}\label{test2}
F(\xi, X)\geq  f(u(x), \xi)\quad \text{for}\;\; (\xi,X)\in \bar{J}^{2,-}u(x)\quad \text{and}\quad \nabla u(x)\neq 0.
\end{equation}
This observation will be used below while applying Crandall-Ishii-Jensen lemma \cite[Theorem~3.2]{CIL}.

\begin{proof}[{\bf Proof of Theorem~\ref{AB001}}]
Consider a smooth cut-off function $\psi:\mathbb{R}^n\to [0, \infty)$ satisfying $\psi(x)=0$ for $|x|\leq \frac{1}{4}$, 
$\psi(x)=2\norm{u}_\infty$ for $|x|\geq \frac{1}{2}$ and $0\leq \psi\leq 2\norm{u}_\infty$. We fix $\gamma\in (0, 1)$ so that
\begin{equation}\label{gamma_range} m-1 - (1-\gamma)(q-1)> (q-1)\gamma\quad  \text{and}\quad 0< \frac{(\gamma-1)(m+1-q)+1}{m+1-q}<\gamma.\end{equation}
 Also, consider a function $\delta:(0, \infty)\to (0, \infty)$
satisfying the following
$$\lim_{r\to\infty}\delta(r)=0, \quad \lim_{r\to\infty} \delta(r) \left(\frac r4\right )^\gamma=\infty,\quad
\text{and}\quad \lim_{r\to\infty} \delta(r) r^{\frac{(\gamma-1)(m+1-q)+1}{m+1-q}}=0.$$
For $R>1$, let us define the doubling function 
$$\Phi(x, y)=u(x)-u(y)-\delta |x-y|^\gamma - \psi\left(\frac xR\right)$$
with $\delta=\delta(R)$. We claim that there exists a $R_0$ satisfying 
\begin{equation}\label{AB002}
\sup_{(x, y)\in B_R\times B_R} \Phi(x, y)\leq 0 \quad \text{for all}\; R\geq R_0.
\end{equation}
Once \eqref{AB002} is established, we can complete the proof as follows: fix any $x, y\in\mathbb{R}^N$ and consider any $R\geq R_0$ satisfying $|x|, |y|\leq R/4$. From \eqref{AB002} we then have $|u(x)-u(y)|\leq \delta(R)|x-y|^\gamma$. Now letting $R\to\infty$ and using the first property of  $\delta$, we see that $u(x)=u(y)$, so $u$ turns out to be a constant.

We prove \eqref{AB002} by contradiction. We start by assuming that for some large $R$
$$\Phi(\bar{x},\bar{y})=\max_{(x, y)\in \bar{B}_R\times \bar{B}_R} \Phi(x, y)>0,$$
where $\bar{x},\bar{y}\in \bar{B}_R$. By the definition of $\psi$ for $|x|\ge R/2$, we have $\bar{x}\in B_{R/2}$. From the second property of $\delta$,  it also follows that
$|\bar{x}-\bar{y}|\leq \frac{R}{4}$ for all large $R$. We set $\bar{a}=\bar{x}-\bar{y}$. Since $\Phi(\bar{x},\bar{y})>0$, it follows that $\bar{a}\neq 0$.
We denote by
\begin{align}\label{phi-pq}
\phi(x, y)&:=\delta |x-y|^\gamma+\psi(x/R), \quad \bar{p}:=\nabla_x\phi(\bar{x},\bar{y}) = \delta\gamma |\bar{a}|^{\gamma-2} \bar{a}+ R^{-1} \nabla\psi(\bar{x}/R), \nonumber
\\
\bar{q}&:=-\nabla_y\phi(\bar{x},\bar{y})=\delta\gamma |\bar{a}|^{\gamma-2}\bar{a}.
\end{align}
Note that, since $|\bar{a}|\leq R/4$ and $\gamma<1$, we have $|\bar{p}|\geq \gamma \delta(R) R^{\gamma-1} - R^{-1}|\nabla\psi|_\infty = R^{-1} (\gamma \delta(R) R^{\gamma} - |\nabla\psi|_\infty)>0$
for all large $R$.
Applying \cite[Theorem 3.2]{CIL}, we see that for any $\varepsilon>0$, there exists $X, Y\in \mathbb{S}_n$ satisfying
\begin{align}\label{pX}
(\bar{p}, X)\in \bar{J}^{2, +} u(\bar{x}), \quad (\bar{q}, Y)\in \bar{J}^{2, -} u(\bar{x}),
\end{align}
and 
\begin{equation}\label{AB003}
-({\varepsilon}^{-1} + \norm{D^2 \phi(\bar{x}, \bar{y})}) I\leq
\begin{pmatrix}
X & 0\\
0 & -Y
\end{pmatrix}
\leq D^2 \phi(\bar{x}, \bar{y}) + \varepsilon (D^2 \phi(\bar{x}, \bar{y}))^2.
\end{equation}
Here $\norm{D^2 \phi(\bar{x}, \bar{y})}$ denotes the maximum of the modulus of the eigenvalues of $D^2 \phi(\bar{x}, \bar{y})$. Letting  $M=D^2 \varphi(\bar{a})$ with $\varphi(x)=\delta |x|^\gamma$, we note that
$$ 
D^2 \phi(\bar{x}, \bar{y})=
\begin{pmatrix}
M & -M\\
-M & M
\end{pmatrix}
+ R^{-2}
\begin{pmatrix}
D^2\psi(\bar{x}/R) & 0\\
0 & 0
\end{pmatrix}.
$$
For our calculations below, we set $\varepsilon=\kappa \delta^{-1} |\bar{a}|^{2-\gamma}$ for some $\kappa\in (0,1)$ to be chosen later. With these notations in hand, we see from \eqref{AB003},   multiplying by $(v, 0)$ 
for any unit vector $v\in\mathbb{R}^n$ we have, because of the structure of $M_1=D^2 \phi(\bar{x}, \bar{y})$, that
$$ - (\kappa^{-1}\delta|\bar{a}|^{\gamma-2} +\norm{M_1}) \leq \langle v X, v\rangle \leq  \langle v M, v\rangle + C [R^{-2} + \kappa\delta^{-1} |\bar{a}|^{2-\gamma} (\norm{M}^2 + R^{-2} \norm{M}+R^{-4})],$$
where the constant $C$ depends only on $\norm{D^2 \psi}_\infty$. Note that, by the property of $\delta$,
$$
R^2= R^{2-\gamma} R^{\gamma}\geq  \delta^{-1}|\bar{a}|^{2-\gamma},
$$
provided $R$ is large, giving us $R^{-2}\leq \delta |\bar{a}|^{\gamma-2}$. This also
implies
$$\norm{M_1}=\norm{D^2 \phi(\bar{x}, \bar{y})}\leq \kappa_1 \delta |\bar{a}|^{\gamma-2},$$
for some constant $\kappa_1$, independent of $R, \bar{x}, \bar{y}$.  Since all the norms are equivalent in finite dimension, the above estimate is obtained by estimating the entries of $M_1$.
Thus, we obtain
\begin{equation}\label{AB004}
\norm{X} \leq \kappa_2 \delta |\bar{a}|^{\gamma-2},
\end{equation}
for some constant $\kappa_2$. Using \eqref{test1}-\eqref{test2} and \eqref{pX}, we see that
$$F(\bar{p}, X)\leq f(u(\bar{x}), \bar{p})\quad \text{and} \quad F(\bar{q}, Y)\geq f(u(\bar{y}), \bar{q}).$$
Subtracting the above inequalities we arrive at
\begin{equation}\label{AB005}
\underbrace{F_p(\bar{p}, X)-F_p(\bar{q}, Y)}_{I}+\underbrace{F_q(\bar{p}, X)-F_q(\bar{q}, Y)}_{II}=F(\bar{p}, X)-F(\bar{q}, Y) \leq f(u(\bar{x}), \bar{p}) - f(u(\bar{y}), \bar{q}).
\end{equation}
Next, we compute $I$ and $II$. Because of similarity, we only provide the details for $I$.
We write
\begin{align}\label{AB006}
-I &= \underbrace{(|\bar{p}|^{p-2} -|\bar{q}|^{p-2}){\rm tr} X + (p-2) [|\bar{p}|^{p-4}\langle \bar{p} X,\bar{p}\rangle - |\bar{q}|^{p-4}\langle \bar{q} X,\bar{q}\rangle]}_{I_1}\nonumber
\\
&\qquad + \underbrace{|\bar{q}|^{p-2}{\rm tr} (X-Y) + (p-2) |\bar{q}|^{p-4}\langle \bar{q} (X-Y),\bar{q}\rangle}_{I_2}.
\end{align}
First consider $I_2$. Since $\bar{q}\neq 0$, we consider a orthonormal basis  in $\mathbb R^N$ given by $(\frac{\bar{q}}{|\bar{q}|}, v_1, \ldots, v_{n-1})$ and notice that
\begin{align*}
I_2 &=  ( |\bar{q}|^{p-4}\langle \bar{q} (X-Y),\bar{q}\rangle + \sum_{i=1}^{n-1}|\bar{q}|^{p-2}\langle v_i(X-Y), v_i\rangle) + (p-2) |\bar{q}|^{p-4}\langle \bar{q} (X-Y),\bar{q}\rangle
\\
&= (p-1) |\bar{q}|^{p-4}\langle \bar{q} (X-Y),\bar{q}\rangle + \sum_{i=1}^{n-1}|\bar{q}|^{p-2}\langle v_i(X-Y), v_i\rangle).
\end{align*}
Applying \eqref{AB003} on the vector $(v_i, v_i)$, we see that 
$$\langle v_i(X-Y), v_i\rangle\leq C(R^{-2} + \kappa \delta |\bar{a}|^{\gamma-2})$$
for some constant $C$, dependent only on $\norm{D^2\psi}_\infty$. We also apply \eqref{AB003} on the vector $(\bar{q}, -\bar{q})$ to obtain
$$\langle \bar{q} (X-Y),\bar{q}\rangle\leq 4 \langle \bar{q} M,\bar{q}\rangle + C (R^{-2} + \kappa\delta |\bar{a}|^{\gamma-2})|\bar{q}|^2.$$
Since 
$$M_{ij}=\delta\gamma(\gamma-1)|\bar{a}|^{\gamma-2} \frac{(\bar{a})_i(\bar{a})_j}{|\bar{a}|^2} + \delta \gamma |\bar{a}|^{\gamma-2}\left(\delta_{ij}-\frac{(\bar{a})_i(\bar{a})_j}{|\bar{a}|^2}\right),$$
from the definition of $\bar{q}$ we have
$$\langle \bar{q} M,\bar{q}\rangle
= \delta \gamma (\gamma-1) |\bar{q}|^2 |\bar{a}|^{\gamma-2}. $$
Thus
$$I_2\leq \left[4(p-1)\delta\gamma(\gamma-1) |\bar{a}|^{\gamma-2} + C(R^{-2} + \kappa\delta |\bar{a}|^{\gamma-2})\right]|\bar{q}|^{p-2}.$$
Now we choose $\kappa\in (0, 1)$, dependent on $R$, small enough so that $\kappa C=2  (p-1)\gamma(1-\gamma)$. Thus, we have
$$I_2\leq \delta \left[-2(p-1)\gamma(1-\gamma) |\bar{a}|^{\gamma-2} + CR^{-2}\delta^{-1} \right]|\bar{q}|^{p-2}.$$
Since $R^{-2}\delta^{-1}|\bar{a}|^{2-\gamma}\leq R^{-\gamma} \delta^{-1}\to 0$, as $R\to\infty$ and using the definition of $\bar q$ from \eqref{phi-pq}, we get
$$
I_2\leq -(p-1)\gamma(1-\gamma)\delta |\bar{q}|^{p-2} |\bar{a}|^{\gamma-2}= -(p-1)(1-\gamma) \gamma^{p-1} \delta^{p-1} |\bar{a}|^{(p-1)(\gamma-1)-1}
$$
for all large $R$.

To compute $I_1$,  we first observe that as $|\bar a|<\frac{R}{4}$ implies $R\geq |\bar{a}|^{1-\gamma}R^\gamma> \frac{1}{C} \delta^{-1} |\bar{a}|^{1-\gamma}$ for large enough $R$. Therefore, using the definition of $\bar p$ and $\bar q$ from \eqref{phi-pq}, we obtain 
\begin{equation}\label{p_bar_q_bar} |\bar{p}-\bar{q}|\leq C R^{-1}, \quad  |\bar{p}|\leq C \delta |\bar{a}|^{\gamma-1}. \quad\end{equation}
 Again, for any exponent $\beta\in\mathbb{R}$, {by the definition of $\bar p$ and $\bar q$,}
\begin{align*}
\left||\bar{p}|^{\beta}-|\bar{q}|^\beta\right|=(\gamma\delta|\bar{a}|^{\gamma-1})^{\beta} \left(|\hat{a}+ \frac{|\bar{a}|^{1-\gamma}}{\gamma R\delta}\nabla\psi(\bar{x}/R)|^\beta-1 \right),
\end{align*}
where $\hat{a}$ denotes the unit vector along $\bar{a}$. Since  $|\bar a|\le R$, being $|\bar x|, |\bar y|\le R/2,$ then, by the properties of $\delta$,
$$
\frac{|\bar{a}|^{1-\gamma}}{ R\delta}\leq \frac{1}{R^\gamma\delta(R)}\xrightarrow{R\to\infty} 0,
$$
so that for $R$ large the vector $\hat{a}+ \frac{|\bar{a}|^{1-\gamma}}{\gamma R\delta}\nabla\psi(\bar{x}/R)$ is close in norm  to the unit vector $\hat a=\bar a/|\bar a|$. In turn, 
using the Lipschitz property of $x\mapsto |x|^\beta$ around $|x|=1$,  namely,  for any unit vector $e$, the map $x\mapsto |x+e|^\beta$ is Lipschitz for $|x|<1/2$, as $t\mapsto t^\beta$ is smooth for $t>1/2$,  we have
\begin{equation}\label{beta_estim} \left||\bar{p}|^{\beta}-|\bar{q}|^\beta\right|\leq (\gamma\delta|\bar{a}|^{\gamma-1})^{\beta} \zeta(R)\end{equation}
for some function $\zeta$ that vanishes as $R\to\infty$. Now using  \eqref{beta_estim}  with $\beta=p-2$ and \eqref{AB004}, we see that for all $R$ large enough we have
 $$|(|\bar{p}|^{p-2} -|\bar{q}|^{p-2}){\rm tr} X|\le  \kappa_2\delta (\gamma\delta)^{p-2} |\bar{a}|^{(p-1)(\gamma-1)-1}\zeta(R).$$
Next, using \eqref{beta_estim} with $\beta=p-4$, \eqref{AB004}  and \eqref{p_bar_q_bar}  we estimate
 $$
 \begin{aligned}
 \bigl||\bar{p}|^{p-4}\langle \bar{p} X,\bar{p}\rangle - 
 |\bar{q}|^{p-4}\langle \bar{q} X,\bar{q}\rangle\bigr|
 &\le |\langle \bar{p} X,\bar{p}\rangle \bigl| |\bar{p}|^{p-4} - |\bar{q}|^{p-4}\bigr| + |\bar{q}|^{p-4}
 |\langle \bar{p} X,\bar{p}\rangle-\langle \bar{q} X,\bar{q}\rangle|
 \\
 & \le |\bar p|^2\kappa _2\delta |\bar a|^{\gamma-2}(\gamma\delta|\bar{a}|^{\gamma-1})^{p-4} \zeta(R)+ 
 |\bar{q}|^{p-4}(|\bar{p}|+|\bar{q}|)\norm{X}\frac{1}{R}
 \\
 &=C \delta (\gamma\delta)^{p-2} |\bar{a}|^{(p-1)(\gamma-1)-1}\zeta(R) +  C \delta^{p-1}|\bar{a}|^{(p-1)(\gamma-1)-1}\tilde\zeta(R),
\end{aligned},$$
where in the second term of the last equality we have used
$\frac{1}{R}\leq \delta |\bar{a}|^{\gamma-1}\frac{1}{\delta R^\gamma}=\delta |\bar{a}|^{\gamma-1} \tilde\zeta(R)$ with $\tilde\zeta(R)\to 0$ as $R\to\infty$. 
Therefore, for $R$ sufficiently large, we have
 $$I_1\leq \frac{1}{2} (p-1)(1-\gamma) \gamma^{p-1} \delta^{p-1} |\bar{a}|^{(p-1)(\gamma-1)-1}.$$
Plugin the  estimates of $I_1$ and $I_2$ in \eqref{AB006} we obtain
$$I\geq \frac{1}{2} (p-1)(1-\gamma) \gamma^{p-1} \delta^{p-1} |\bar{a}|^{(p-1)(\gamma-1)-1},$$
provided $R\geq R_0$, where $R_0$ is chosen large depending on the estimates above. Similarly, we would also have 
$$ II\geq \frac{1}{2} (q-1)(1-\gamma) \gamma^{q-1} \delta^{q-1} |\bar{a}|^{(q-1)(\gamma-1)-1}, $$
provided $R\geq R_0$. 

Now,  by \eqref{f_IL}, since $u$ is bounded  and using that $\bar p=\bar q+\nabla\psi/R$, we have
$$f(u(\bar{x}), \bar{p}) - f(u(\bar{y}), \bar{q})\leq C(|\bar{q}|+R^{-1})^m\leq C_1 [\delta^m |\bar{a}|^{m(\gamma-1)} + R^{-m}]. $$
Since $I\geq 0$, from \eqref{AB005} we obtain
\begin{equation}\label{AB007}
\frac{1}{2} (q-1)(1-\gamma) \gamma^{q-1} \delta^{q-1} |\bar{a}|^{(q-1)(\gamma-1)-1}\leq C_1 [\delta^m |\bar{a}|^{m(\gamma-1)} + R^{-m}],
\end{equation}
provided $R\geq R_0$. We observe that
\begin{align*}
\delta^m |\bar{a}|^{m(\gamma-1)} &= \delta^{q-1} \delta^{m+1-q}R^{m(\gamma-1)}\left(\frac{|\bar{a}|}{R}\right)^{m(\gamma-1)}
\\
&\leq \delta^{q-1} \delta^{m+1-q}R^{m(\gamma-1)}\left(\frac{|\bar{a}|}{R}\right)^{(q-1)(\gamma-1)-1}
\\
&= \delta^{q-1} [\delta R^{\frac{(m+1-q)(\gamma-1)+1}{m+1-q}}]^{m+1-q} |\bar{a}|^{(q-1)(\gamma-1)-1},
\end{align*}
where in the second line we used the fact $m(\gamma-1)> (q-1)(\gamma-1)-1$, by \eqref{gamma_range}. Now, by the second property of $\delta$, we can choose $R_0$ large enough so that
$$\frac{1}{4} (q-1)(1-\gamma) \gamma^{q-1} \delta^{q-1} |\bar{a}|^{(q-1)(\gamma-1)-1}\geq
\frac{1}{4} (q-1)(1-\gamma) \gamma^{q-1} \delta^{q-1} R^{(q-1)(\gamma-1)-1}\geq C_1 R^{-m},$$
for all $R\geq R_0$, where we use the fact
$$ \delta^{q-1} R^{(q-1)(\gamma-1)+m-1}=
[\delta(R) R^{\gamma +\frac{m-1}{q-1}-1}]^{q-1}\to\infty,
\quad \text{as}\; R\to\infty,$$
using $m>q$.

Hence, inserting the above estimate in \eqref{AB007}, we get for $R\geq R_0$  
$$
 \frac{1}{4} (q-1)(1-\gamma) \gamma^{q-1} \delta^{q-1} |\bar{a}|^{(q-1)(\gamma-1)-1}\leq C_1 \delta^{q-1} [\delta R^{\frac{(m+1-q)(\gamma-1)+1}{m+1-q}}]^{m+1-q} |\bar{a}|^{(q-1)(\gamma-1)-1}
$$
so that for  some
constant $C_2$ it must hold
$$0< C_2\leq \delta R^{\frac{(m+1-q)(\gamma-1)+1}{m+1-q}}.$$
By the third property of $\delta$, this cannot occur for all large $R$.
Hence we have reached a contradiction. This gives us the claim \eqref{AB002} and completes the proof.
\end{proof}

\section{Sum of nonlinearities: Proof of Theorems~\ref{thm+} and \ref{thm+_v^b}}
We first observe that the so called critical exponent with respect to the gradient for equation \eqref{main_M+}, given by $ps/(s+1)$ for the $p$-Laplacian case as discussed  in \cite{FSZ}, see also \cite{VB2} for $p=2$, continues to be the same. This suggests that, in the context of the  $(p,q)$-Laplacian with $q<p$, the  $p$-Laplacian operator is, in some sense, the dominant one.

To this aim, note that if $v(x)=k^\alpha u(kx)$, $k>0$, then
by routine calculation $\Delta_p v(x)=k^{\alpha(p-1)+p}\Delta_p u(y)$ with $y=kx$, so that
if $u$ is a solution of \eqref{main_M+}, it follows that $v$ is a solution of 
$$-\Delta_pv-k^{(\alpha+1)(p-q)}\Delta_qv=k^{\alpha(p-1-s)+p}v^s+Mk^{\alpha(p-1-m)+p-m}|\nabla v|^m.$$
In particular, if $u$ is a solution of \eqref{main_M+}, then   $v(x)=k^{p/(s-p+1)}u(kx)$ is a solution of 
$$-\Delta_pv-k^{\frac{(s+1)(p-q)}{s-p+1}}\Delta_qv=v^s+Mk^{(s+1)\bigl(\frac{sp}{s+1}-m\bigr) }\,|Dv|^m,$$
so that, in the subcritical case,  by letting $k\to0$ we recover $-\Delta_pv=v^s$.

On the other hand, if $v(x)=k^{(p-m)/(m-p+1)}u(kx)$, with $u$  solution of \eqref{main_M+}, then $v$ is a solution of 
$$-\Delta_pv-k^{\frac{p-q}{m-p+1}}\Delta_qv=k^{\frac{1+s}{m-p+1}\bigl(m-\frac{sp}{s+1}\bigr)} \, v^s+M|Dv|^m,$$
so that, in the supercritical case,  by letting $k\to0$ we recover $-\Delta_pv=M|\nabla v|^m$.

\bigskip{\it {\bf Proof of Theorem \ref{thm+}}.} We start the proof by taking inequality \eqref{ineq_b=1} with $\mathcal Bu=-u^s-M|\nabla u|^m$, being
$$\begin{aligned}
    \frac{(\mathcal Bu)^2}{z^{q-2}}&=\frac{u^{2s}}{z^{q-2}}
+M^2z^{m-q+2}+2Mu^sz^{\frac m2-q+2}
\\\frac{\mathcal Bu \langle\nabla z,\nabla u\rangle}{z^{q/2}}&=-\bigl(u^sz^{-q/2} +M z^{(m-q)/2}\bigr)\langle\nabla z,\nabla u\rangle\\\frac{\langle\nabla\bigl(\mathcal Bu\bigr),\nabla v\rangle}{z^{(q-2)/2}}&= -su^{s-1}z^{(4-q)/2}-M\frac m2 z^{(m-q)/2}\langle\nabla z,\nabla u\rangle,
\end{aligned}$$
we reach, by Lemma \ref{lemma_Bcal} and estimate \eqref{ineq_C_1}, 
\begin{equation}\label{est1_th+}\begin{aligned} \frac12\mathscr{A}_v(z)& +\frac 1{N A^2}\biggl[u^{2s}z^{2-q}
+M^2z^{m-q+2}+2Mu^sz^{\frac m2-q+2}\biggr]-\frac1Asu^{s-1}z^{(4-q)/2}\\&\le 
-\frac1{A}\biggl(\frac1N+\frac12\biggr)\frac D{A}\bigl(u^sz^{-q/2} +M z^{(m-q)/2}\bigr)\langle\nabla z,\nabla v\rangle
\\&\quad +\frac MA\frac m2 z^{(m-q)/2}\langle\nabla z,\nabla u\rangle
+ C_2\frac{|\nabla z|^2}{z}.
\end{aligned}\end{equation}
Next, by \eqref{estim_DA_EA}, applying Young inequality, we have
$$\biggl|\frac1{A}\biggl(\frac1N+\frac12\biggr)\frac D{A}\biggr|u^sz^{-q/2} \bigl|\langle\nabla z,\nabla v\rangle\bigr|\le \frac\varepsilon {A^2} u^{2s} z^{2-q}+C_\varepsilon \frac{|\nabla z|^2}z$$
and 
$$\frac M{A}\biggl|\frac m2-\frac1{A}\biggl(\frac1N+\frac12\biggr)\frac D{A}\biggr| z^{(m-q)/2}\bigl|\langle\nabla z,\nabla v\rangle\bigr|\le \frac\varepsilon {A^2} z^{m-q+2}+C_\varepsilon \frac{|\nabla z|^2}z.$$
Consequently, inequality \eqref{est1_th+} becomes
\begin{equation}\label{est2_th+}
\begin{aligned}\frac12\mathscr{A}_v(z)+\biggl(\frac 1N-\varepsilon&\biggr)\frac{u^{2s}z^{2-q}}{A^2}+\biggl(\frac {M^2}N-\varepsilon\biggr)\frac{z^{m-q+2}}{A^2}\\&
+2\frac M N \frac{u^sz^{\frac m2-q+2}}{A^2}-\frac1Asu^{s-1}z^{(4-q)/2}\le C\frac{|\nabla z|^2}{z}\quad\mbox{on }\{z>0\}
\end{aligned}\end{equation}
for some positive constant $C$.
Note that hypothesis $s>q-1$ and $m>\frac{sq}{s+1}$ implies $m-q+2>1$.
Further, as $s>1$, applying Young inequality with exponents 
$2s/(s-1)$ and $2s/(s+1)$ we have
$$\frac1Asu^{s-1}z^{(4-q)/2}=\frac{su^{s-1}z^{(2-q)(s-1)/2s}}{A^{(s-1)/s}}\cdot \frac{z^{1+(2-q)/2s}}{A^{1/s}}
\le \varepsilon \frac{u^{2s}z^{2-q}}{A^2}+C_\varepsilon \frac{z^{\frac{2s+2-q}{s+1}}}{A^{2/(s+1)}}.$$
Thanks to \eqref{bound+}, a further application of Young inequality with exponents 
$$\frac{(m-q+2)(s+1)}{2s+2-q}\quad\mbox{and}\quad
\frac{(m-q+2)(s+1)}{s(m-q)+m}$$
gives
$$C_\varepsilon \frac{z^{\frac{2s+2-q}{s+1}}}{A^{\frac 2{s+1}}}=
C_\varepsilon \frac{z^{\frac{2s+2-q}{s+1}}}{A^{\frac 2{s+1}\cdot \frac{2s+2-q}{m-q+2}}\cdot A^{\frac 2{s+1}\cdot \frac{m-2s}{m-q+2}} }\le 
\varepsilon\frac{z^{m-q+2}}{A^2}
+C_\varepsilon \frac 1{A^{\frac{2(m-2s)}{s(m-q)+m}}}.$$
Inserting the above estimates in \eqref{est2_th+}, 
it follows
$$\begin{aligned}\frac12\mathscr{A}_v(z)+\biggl(\frac 1N-2\varepsilon\biggr)\frac{u^{2s}z^{2-q}}{A^2}&+\biggl(\frac {M^2}N-2\varepsilon\biggr)\frac{z^{m-q+2}}{A^2}
\\&\le \frac{|\nabla z|^2}{z}+C_\varepsilon \frac 1{A^{\frac{2(m-2s)}{m(s+1)-qs}}}\quad\mbox{on }\{z>0\},\end{aligned}$$
yielding for $\varepsilon$ sufficiently small
$$\mathscr{A}_v(z)+c_1\frac{z^{m-q+2}}{A^2}
\le C\frac{|\nabla z|^2}{z}+C_1 \frac 1{A^{\frac{2(m-2s)}{m(s+1)-qs}}}\quad\mbox{on }\{z>0\},$$
Now, by \eqref{est2_th+}, the exponent of $A$ is positive, so that we obtain
$$\mathscr{A}_v(z)+c_1\frac{z^{m-q+2}}{\bigl(z^{\frac{p-q}2}+1\bigr)^2}
\le C\frac{|\nabla z|^2}{z}+C_1 \quad\mbox{on }\{z>0\}.$$
Now, note that on the set $\{z>1\}$ the inequality  $2z^{\frac{p-q}2}>1+z^{\frac{p-q}2}$ holds , which in particular gives $\bigl(z^{\frac{p-q}2}+1\bigr)^{-2}>z^{p-q}/4$, so that the following  is in force
$$\mathscr{A}_v(z)+\frac{c_1}4z^{m-p+2}
\le C\frac{|\nabla z|^2}{z}+C_1 \quad\mbox{on }\{z>1\}.$$
Set $\tilde z:=z-1$, then 
$$\frac12\mathscr{A}_v(\tilde z)+\frac{c_1}4{\tilde z}^{m-p+2}
\le C\frac{|\nabla \tilde z|^2}{\tilde z}+C_1\quad\mbox{on }\{\tilde z>0\}.$$

%Furthermore, set $\tilde{\tilde z}=\tilde z-\bigl(4_\varepsilon/c_1\bigr)^{1/(m+p-2}$, in particular $\tilde z\ge \tilde{\tilde z}$, hence
%$$\frac12\mathscr{A_v(\tilde{\tilde z})+\frac{c_1}4{\tilde{\tilde z}}^{m-p+2}\le \frac{|\nabla \tilde{\tilde z}|^2}{\tilde z} \quad\mbox{on }\{\tilde{\tilde z}>0\}.$$

Therefore, applying \cite[Lemma 3.1]{BV} we obtain 
$${\tilde z}\le C\big(1+ \left({\rm dist}\left(x,\partial\Omega\right)\right)^{-\frac{2}{m-p+2}}\big),$$
in turn \eqref{1-2} follows at once.
\hfill$\square$

\bigskip{\it{\bf  Proof of Theorem \ref{thm+_v^b}}.}
Let $u$ be a solution of \eqref{main_M+}. As in the proof of Theorem \ref{th_sm}, here we need to consider the change of variables $u=v^b$ so that  the inequality in the statement of  Lemma \ref{lemma_Bcal}, when
$$\mathcal Bu=-u^s-M|\nabla u|^m=-v^{bs}-M|b|^mv^{m(b-1)}z^{m/2},
$$ and
$$\begin{aligned}\frac{\bigl(\mathcal Bu\bigr)^2}{b^{2(q-1)}}
\frac1{v^{2(b-1)(q-1)}z^{q-2}}=&\frac1{b^{2(q-1)}}v^{2[b(s-q+1)+q-1]}z^{2-q}+M^2|b|^{2(m-q+1)}v^{2(b-1)(m-q+1)}z^{m-q+2}\\&+2M|b|^{m-2(q-1)}v^{(b-1)[m-2(q-1)]+bs}z^{\frac m2-q+2}
\end{aligned}$$
$$\begin{aligned}-\frac{\langle\nabla\bigl(\mathcal Bu\bigr),\nabla v\rangle}{b|b|^{q-2}}
\frac 1{v^{(b-1)(q-1)}z^{\frac{q-2}2}}=&\frac s{|b|^{q-2}}v^{b(s-q+1)+q-2}z^{2-\frac q2}+\frac {Mm(b-1)}{b|b|^{q-m-2}}v^{(b-1)(m-q+1)-1}z^{ 2+\frac{m-q}{2}}
\\&+\frac {Mm}{2b|b|^{q-2-m}}v^{(b-1)(m-q+1)}z^{\frac{m-q}2}
\langle\nabla z,\nabla v\rangle,
\end{aligned}$$

so that, denoting with
\begin{equation}\label{tau}\tau=b(s-q+1)+q-1,\end{equation}
by Lemma \ref{lemma_Bcal}, together with  \eqref{ineq_C_1},  \eqref{C_1} and \eqref{nabla/v}, we have
$$\begin{aligned} \frac12\mathscr{A}_v&(z) + (A_1-\varepsilon)\frac{z^2}{v^2}+\frac1{Nb^{2(q-1)}}\frac1{A^2}{v^{2\tau}z^{2-q}}+\frac {M^2}N|b|^{2(m-q+1)}\frac1{A^2}{v^{2(b-1)(m-q+1)}z^{m+2-q}}\\&\quad +2\frac MN|b|^{m-2(q-1)}\frac1{A^2}{v^{(b-1)[m-2(q-1)]+bs}z^{\frac m2+2-q}}\\&\le 
-\biggl(\frac1N+\frac 12\biggr)\frac D{A}\frac{v^{\tau}z^{-\frac q2}}{b|b|^{q-2}}
%+M\frac{v^{(b-1)(m-q+1)}z^{\frac{m-q}2}}{b|b|^{q-m-2}}\biggr]
\frac{\langle\nabla z,\nabla v\rangle}A
+\frac 1{A|b|^{q-2}}\biggl\{s-
2\frac{b-1}{b} \biggl(\frac 1N+\frac 12\biggr)\frac EA\biggr\}v^{\tau-1}z^{\frac{4-q}2}
\\&\quad +\frac1A \frac{M(b-1)}{b|b|^{q-m-2}}\biggl[m -2\biggl(\frac 1N+\frac 12\biggr)\frac EA
\biggr]v^{(b-1)(m-q+1)-1}z^{\frac{m+4-q}2}
\\&\quad +\frac1A\frac {M}{b|b|^{q-m-2}}\biggl[\frac m2-\biggl(\frac1N+\frac 12\biggr)\frac D{A}\biggr]v^{(b-1)(m-q+1) }z^{\frac{m-q}2}
\langle\nabla z,\nabla v\rangle
\\&\quad + \bigl(C_2+C_\varepsilon\bigr)\frac{|\nabla z|^2}{z}\quad \mbox{on}\quad \{z>0\},
\end{aligned}$$
where $A_1$ is given in \eqref{A_1}.
Estimating, similarly as in \eqref{nabla/v} we have
$$\begin{aligned}\biggl|\biggl(&\frac1N+\frac 12\biggr)\frac D{A}\frac{v^{\tau}z^{-\frac q2}}{b|b|^{q-2}}\frac{\langle\nabla z,\nabla v\rangle}A\biggr|
\\&\le \biggl(\frac1N+\frac 12\biggr)|p-2|\frac{v^{\tau}z^{1-\frac q2}}{|b|^{q-1}}\frac 1A\frac{|\nabla z||\nabla v|}z
\le \frac{\varepsilon}{A^2} v^{2\tau}z^{2-q}
+C_\varepsilon \frac{|\nabla z|^2}{z}\end{aligned}$$
and 
$$\begin{aligned}
\frac1A\biggl|\frac {M}{b|b|^{q-m-2}}\biggl[\frac m2&-\biggl(\frac1N+\frac 12\biggr)\frac D{A}\biggr]v^{(b-1)(m-q+1) }z^{\frac{m-q}2}
\langle\nabla z,\nabla v\rangle
\biggr|\\&\le
\frac1A\frac {M}{|b|^{q-m-1}}\biggl|\frac m2+\biggl(\frac1N+\frac 12\biggr)(p-2)\biggr|v^{(b-1)(m-q+1) }z^{\frac{m-q}2+1}
\frac{|\nabla z||\nabla v|}z\\&\le \frac{\varepsilon}{A^2} v^{2(b-1)(m-q+1)}z^{m+2-q}+ C_\varepsilon \frac{|\nabla z|^2}{z}
\end{aligned}$$
yielding
\begin{equation}\label{zeta_1_2}\begin{aligned} \frac12\mathscr{A}_v(z) &+ (A_1-\varepsilon)\frac{z^2}{v^2}+\frac1{A^2}\biggl(\frac1{Nb^{2(q-1)}}-\varepsilon\biggr)v^{2\tau}z^{2-q}\\&-\frac 1{A|b|^{q-2}}\biggl\{
s-2\frac{b-1}{b}\biggl(\frac 1N+\frac 12\biggr)\frac EA\biggr\}v^{\tau-1}z^{\frac{4-q}2}
\\&+\frac1{A^2}\biggl(\frac {M^2}N|b|^{2(m-q+1)}-\varepsilon\biggr)v^{2(b-1)(m-q+1)}z^{m+2-q} 
\\&
 -\frac1A \frac{M(b-1)}{b|b|^{q-m-2}}\biggl[ m-2\biggl(\frac 1N+\frac 12\biggr)\frac EA
\biggr]v^{(b-1)(m-q+1)-1}z^{\frac{m-q}2+2}
\\&+2\frac MN|b|^{m-2(q-1)}\frac1{A^2}v^{\tau +(b-1)(m-q+1)}z^{\frac{m-2q}2+2}
\le C\frac{|\nabla z|^2}{z}\quad \mbox{on}\quad \{z>0\}.
\end{aligned}\end{equation}
Put
$$\zeta_1=v^{\tau+1}z^{-\frac q2},\qquad \zeta_2=v^{(b-1)(m-q+1)+1}z^{\frac{m-q}2},$$
define
%$$\mathcal H_1:=\frac1{A^2}\biggl(\frac1{Nb^{2(q-1)}}-\varepsilon\biggr)v^{2\tau}z^{2-q}-\frac 1{A|b|^{q-2}}\biggl\{\frac{b-1}{b}\biggl[ \frac{2E}{NA}+q-1+(p-q)\mathfrak A\biggr]+s\biggr\}v^{\tau-1}z^{\frac{4-q}2}+\frac{A_1}2\frac{z^2}{v^2}$$
%$$\mathcal H_2:=\frac1{A^2}\biggl(\frac MN|b|^{2(m-q+1)}-\varepsilon\biggr)v^{2(b-1)(m-q+1)}z^{m+2-q}-\frac1A \frac{M(b-1)}{b|b|^{q-m-2}}\biggl[ \frac{2E}{NA}+q-1+(p-q)\mathfrak A +m\biggr]v^{(b-1)(m-q+1)-1}z^{\frac{m+4-q}2}+\frac{A_1}2\frac{z^2}{v^2}$$ so that
$$\mathcal H_1=\biggl(\varepsilon\frac1{A^2}\frac1{b^{2(q-1)}}\zeta_1^2+\mathcal T_\varepsilon(\zeta_1)\biggr)\frac{z^2}{v^2}, \qquad\quad \mathcal H_2=\mathcal S^M_\varepsilon(\zeta_2)\frac{z^2}{v^2}$$
with
$$\mathcal T_\varepsilon(\zeta_1)=\frac1{A^2}\biggl(\frac1{Nb^{2(q-1)}}-2\varepsilon\biggr)\zeta^2_1-\frac 1{A|b|^{q-2}}\biggl[s-2
\frac{b-1}{b}\biggl(\frac 1N+\frac 12\biggr)\frac EA\biggr]\zeta_1+{A_1}-\varepsilon$$
and
$$\mathcal S^M_\varepsilon(\zeta_2)=\frac1{A^2}\biggl(\frac {M^2}N|b|^{2(m-q+1)}-\varepsilon\biggr)\zeta_2^2+\frac1A \frac{M}{|b|^{q-m-2}}\frac{b-1}b\biggl[ 2\biggl(\frac 1N+\frac 12\biggr)\frac EA 
-m\biggr]\zeta_2,$$
so that inequality \eqref{zeta_1_2} implies
\begin{equation}\label{fin_H1_H2}\frac12\mathscr{A}_v(z)
+\mathcal H_1+\mathcal H_2\le C\frac{|\nabla z|^2}{z}\quad \mbox{on}\quad \{z>0\}.\end{equation}
By continuity we can consider  $\varepsilon=0$. We immediately note that $\mathcal H_2(t)\ge0$, since $\frac1{A^2}\frac {M^2}N|b|^{2(m-q+1)}>0$ and 
$$ \frac1A \frac{M}{|b|^{q-m-2}}\frac{b-1}b\biggl[ 2\biggl(\frac 1N+\frac 12\biggr)\frac EA 
-m\biggr]\ge0$$
by \eqref{m_bound_new}, provided that $b\ge1$.

To manage $\mathcal H_1$, we argue  as in the proof of Theorem \ref{th_sm} in the case $b\ge1$, by 
proving that 
the discriminant of $\mathcal T_\varepsilon$ is negative, namely
$$
\tilde{\mathcal D}:=\frac 1{A^2|b|^{2(q-2)}}\biggl[s-2\frac{b-1}{b}\biggl(\frac 1N+\frac 12\biggr)\frac EA\biggr]^2-\frac 4{A^2}\biggl(\frac1{Nb^{2(q-1)}}-2\varepsilon\biggr)(A_1-\varepsilon)<0.
$$
Actually we need to prove that it is possible to choose $b\ge1$ such that there exist $\varepsilon$ and $\kappa>0$
so that we have
\begin{equation}\label{discri+}
\tilde{\mathcal D}\le -\kappa\frac1{A^2}
\frac1{b^{2(q-1)}}.
\end{equation}
This produces an analogous estimate  of the form \eqref{Sep11-2}.  Consequently, choosing $\theta=\frac{2(b-1)(p-q)+2}{\tau+1}$, in particular, $\theta\in(0,2)$, and observing that, by $s>p-1$, we have
$$2\tau=2[b(s-q+1)+q-1]>2b(p-q)+2(q-p+p-1)=
2(b-1)(p-q)+2(p-1)>2(b-1)(p-q),$$
the argument used to reach \eqref{Sep7-2} can be applied with
$$\beta_2=\beta_2(b)=1-q\frac{(b-1)(p-q)+1}{b(s-q+1)+q}-p+q$$
so that
$$\lim_{b\to\infty}\beta_2=1-\frac{(p-q)(1+s)}{s-q+1}>0$$
by \eqref{beta_2_pos_sum}, thus it holds
$$\tilde{\mathcal H}_1\ge \kappa_bz^{1+\max\{1,\beta_2\}}\quad\text{for } z\ge1 \quad\text{and } \kappa_b>0.$$
We have so obtained from \eqref{fin_H1_H2}, thanks also to $\tilde{\mathcal H}_2\ge0$,
$$\frac12\mathscr{A}_v(z)
+\kappa_b z^{1+\gamma}\le C\frac{|\nabla z|^2}{z}\quad \mbox{on}\quad \{z>1\},$$
with $\gamma=\max\{1,\beta_2\}>0$.  Hence by Lemma~\ref{lem1}  and employing an argument similar to Theorem~\ref{th_HJ}, from \eqref{ineq_HJ2}, we obtain
\eqref{z-1}.

\medskip

It remains to prove \eqref{discri+}, or equivalently $A^2b^{2(q-1)}\tilde{\mathcal D}\le -\kappa$. By continuity we consider the case $\varepsilon=0$, so that we have to prove
$$
b^2\biggl[s-2\frac{b-1}{b}\biggl(\frac 1N+\frac 12\biggr)\frac EA\biggr]^2-\frac 4 NA_1\le-\kappa.
$$
From \eqref{tau}, we reach
$$b=\frac{\tau-q+1}{s-q+1}, \qquad \frac{b-1}b=\frac{\tau-s}{\tau-q+1},$$
where in particular $b\ge 1$ holds if and only if $\tau\ge s$ being $s>q-1$ by \eqref{s_bound_new}.
By replacing this expression of $b$ and using \eqref{A_1},  the above condition on the discriminant reads as follows
$$\mathscr T(\tau):=(\tau-q+1)^2\biggl[s-\frac{\tau-s}{\tau-q+1}\frac {N+2}N\frac EA\biggr]^2-\frac4N(\tau-s)(s-q+1)\biggl[\frac{\tau-s}{s-q+1}\Xi+\frac EA\biggr]<-\kappa,$$
where we recall 
$$\Xi=\frac 1N\frac{E^2}{A^2}-\frac 1A(p-q)^2\mathfrak A.$$
Now, note that $\mathscr T(\tau)$ is a trinomial of the form
$$\mathscr T(\tau)=\biggl\{\frac{[sNA-E(N+2)]^2}{N^2A^2}-\frac 4N\Xi\biggr\}\tau^2+\text{l.o.t.}$$
We claim that it results 
\begin{equation}
    \label{coeff_scrT}
    \frac{[sNA-E(N+2)]^2}{N^2A^2}-\frac 4N\Xi<0.\end{equation}
    To this aim, note that
    \eqref{coeff_scrT} is equivalent to
    $$s^2-2s\frac{N+2}N\frac EA+\frac{N+4}{N}\frac{E^2}{A^2}+\frac 4N\frac 1A(p-q)^2\mathfrak A<0$$
which is in force, by  
\eqref{estim_DA_EA}, if it holds
\begin{equation}
    \label{coeff_T_pq}s^2-2s\frac{N+2}N(q-1)+\frac{N+4}{N}(p-1)^2+\frac 4N(p-q)^2<0.\end{equation}
The above inequality is indeed valid by \eqref{s_bound_new}.
Consequently, $\lim_{\tau\to\infty}\mathcal T(\tau)=-\infty$, so it is enough to take $\tau$ sufficiently large to have $\mathscr T(\tau)\le-\kappa$, consequently $b$ will be sufficiently large yielding $b>1$.
The final Liouville property follows reasoning as in the proof of (ii) in Theorem \ref{th_sm}.
\hfill$\square$

\bigskip

{\bf Funding:} This research of M.~Bhakta
is partially supported by a DST Swarnajaynti fellowship (SB/SJF/2021-22/09) and INdAM-ICTP joint research in pairs program  for 2025. A.~Biswas is partially supported by a DST Swarnajaynti fellowship (SB/SJF/2020-21/03).
R.~Filippucci is a member of the {\em Gruppo Nazionale per l'Analisi Ma\-te\-ma\-ti\-ca, la Probabilit\`a e le loro Applicazioni} (GNAMPA) of the {\em Istituto Nazionale di Alta Matematica} (INdAM) and was  partly supported  by
 INdAM-ICTP joint research in pairs program for 2025.
M.~Bhakta and R. Filippucci would like to thank the warm hospitality of ICTP,  the travel support and daily allowances provided by INdAM-ICTP.  

\medskip

{\bf Data availability:} Data sharing not applicable to this article as no datasets were generated or analyzed during the current study.

\medskip

{\bf Conflict of interest} The authors have no conflict of interest to declare that are relevant to the content of this article.

\medskip

\end{document}